\def\MT@register@subst@font{\MT@exp@one@n\MT@in@clist\font@name\MT@font@list
 \ifMT@inlist@\else\xdef\MT@font@list{\MT@font@list\font@name,}\fi}
\newcommand{\myitem}[1]{%
\item[(#1)]\protected@edef\@currentlabel{#1}%
}
\newcommand{\bit}{\begin{itemize}}    
\newcommand{\eit}{\end{itemize}}
\newcommand{\ben}{\begin{enumerate}}
\newcommand{\een}{\end{enumerate}}
\newcommand{\benormal}{\ben[\normalfont 1.]}   
\let\enormal\een
\newcommand{\benroman}{\ben[\normalfont (i)]}  
\let\eroman\een
\newcommand{\bde}{\begin{description}}
\newcommand{\ede}{\end{description}}
\let\oper=\mathbb                               
\newcommand{\III}{\oper{I}}                     
\newcommand{\SSS}{\oper{S}}                     
\newcommand{\UUU}{\oper{U}}                     
\theoremstyle{theorem}
\newtheorem{Theorem}{Theorem}[section]
\newtheorem{Theorem-n}{Theorem}
\newtheorem{Proposition}[Theorem]{Proposition}
\newtheorem{Modal Sahlqvist Theorem}[Theorem]{Modal Sahlqvist Theorem}
\newtheorem{Intuitionistic Sahlqvist Theorem}[Theorem]{Intuitionistic  Sahlqvist Theorem}
\newtheorem{Esakia Duality}[Theorem]{Esakia Duality}
\newtheorem{Main Lemma}[Theorem]{Main Lemma}
\newtheorem{Transfer Lemma}[Theorem]{Transfer Lemma}
\newtheorem{Abstract Sahlqvist Theorem}[Theorem]{Abstract Sahlqvist Theorem}
\newtheorem{Lemma}[Theorem]{Lemma}
\newtheorem{Corollary}[Theorem]{Corollary}
\newtheorem{Claim}[Theorem]{Claim}
\theoremstyle{definition}
\newtheorem{Definition}[Theorem]{Definition}
\theoremstyle{remark}
\newtheorem{Remark}[Theorem]{Remark}
\let\leq=\leqslant
\let\nleq=\nleqslant
\let\geq=\geqslant 
 \let\mathscr\relax 
\renewcommand{\int}{\mathsf{int}\,}
\bmdefine{\A}{A} 
\bmdefine{\C}{C}                                
\bmdefine{\B}{B}
\bmdefine{\D}{D}
\newcommand{\pDL}{\mathsf{PDL}}
\newcommand{\DL}{\mathsf{DL}}
\newcommand{\pSP}{\mathsf{PSP}}
\newcommand{\up}{{\uparrow}}
\newcommand{\down}{{\downarrow}}
\newcommand{\V}{\mathsf{V}}
\newcommand{\ClopUp}{\mathsf{ClopUp}}
\newcommand{\Pries}{\mathsf{Pries}}
\renewcommand{\P}{\mathsf{P}}
\newcommand{\HHH}{\mathbb{H}}
\newcommand{\PPP}{\mathbb{P}}
\newcommand{\PPU}{\mathbb{P}_{\!\textsc{\textup{u}}}^{}}
\newcommand{\T}{\mathcal{T}}
\newcommand{\plt}{p_L^\T}
\newcommand{\fowedge}{\sqcap}
\newcommand{\fovee}{ \sqcup }
\newcommand{\fobigwedge}{\bigsqcap}
\newcommand{\fobigvee}{\bigsqcup}
\newcommand{\Jirr}{\mathsf{Jirr}}
\newcommand{\At}{\mathsf{At}}
\keywords{pseudocomplemented distributive lattice, p-algebra, Heyting algebra, intuitionistic logic, Priestley space, Esakia space, duality theory, free skeleton, free algebra, decidability, universal theory, admissible rule}
\begin{document}

\title[The universal theory of the free pseudocomplemented distributive lattice]{On the universal theory of the free pseudocomplemented distributive lattice}

\author{Luca Carai and Tommaso Moraschini}

\address{Luca Carai: Dipartimento di Matematica ``Federigo Enriques'', Universit\`a degli Studi di Milano, via Cesare Saldini 50, 20133 Milan, Italy}\email{luca.carai.uni@gmail.com}

\address{Tommaso Moraschini: Departament de Filosofia, Facultat de Filosofia, Universitat de Barcelona (UB), Carrer Montalegre, $6$, $08001$ Barcelona, Spain}
\email{tommaso.moraschini@ub.edu}

\date{\today}

\maketitle

\begin{abstract}
It is shown that the universal theory of the free pseudocomplemented distributive lattice is decidable and a recursive axiomatization is presented. This contrasts with the case of the full elementary theory of the finitely generated free algebras which is known to be undecidable. As a by-product, a description of the finitely generated pseudocomplemented distributive lattices that can be embedded into the free algebra is also obtained.
\end{abstract}

\section{Introduction}

A class of similar algebras is said to be a variety when it is closed under the formation of homomorphic images, subalgebras, and direct products. One of the main distinguishing features of varieties is that they contain free algebras with arbitrarily large sets of free generators (see, e.g., \cite[Thm.~II.10.12]{BS12}).
 This observation forms the main ingredient of the proof of a celebrated theorem of Birkhoff, stating that varieties coincide with equational classes (see, e.g., \cite[Thm.~II.11.9]{BS12}).
  In addition, each variety is generated by its countably-generated free algebra and, consequently, an equation holds in a variety if and only if it holds in its countably-generated free algebra.

Because of this, the problem of determining whether the elementary theory of free algebras of a given variety is decidable has attracted significant attention. For instance, a major problem raised by Tarski in 1945 asked whether any two free groups with two or more free generators are elementarily equivalent and whether the theory of free groups is decidable. Both problems were shown to have an affirmative answer in \cite{KM06,Sel06}. 
For the present purpose, other significant examples derive from the study of free Heyting algebras. On the one hand, the full elementary theory of the countably-generated free Heyting algebra is undecidable \cite{Rybakov1985}. On the other hand, its universal theory is decidable \cite{Ryb89} and, although not finitely axiomatizable, was given an independent infinite axiomatization in \cite{Jer08}.

In this paper, we focus on free pseudocomplemented distributive lattices.\ We recall that a \emph{pseudocomplemented distributive lattice} is an algebra $\langle A; \land, \lor, \lnot, 0, 1\rangle$ comprising a bounded distributive lattice $\langle A; \land, \lor, 0, 1 \rangle$ and a unary operation $\lnot$ that, when applied to an element $a \in A$, produces the largest element whose meet with $a$ is $0$ (see, e.g., \cite[Sec.~VIII]{BD74}). From a logical standpoint, the importance of pseudocomplemented distributive lattices derives from the fact that these form the implication-free subreducts of Heyting algebras (see, e.g.,  \cite[Proof of Thm.~2.6]{BP89}). As such, pseudocomplemented distributive lattices can be viewed as the algebraic counterpart of the implication-less fragment of the intuitionistic propositional calculus $\mathsf{IPC}$ in the same way  that Boolean algebras are related to classical logic (see, e.g., \cite{RV93}).

We recall that the class of pseudocomplemented distributive lattices is a variety and, therefore, free pseudocomplemented distributive lattices exist. The elementary theory of all finitely generated free pseudocomplemented distributive lattices was shown to be undecidable in  \cite{Idz87}.\footnote{Further decision problems related to pseudocomplemented distributive lattices were investigated in \cite{KS24b}.} In contrast, we show that the universal theory of all finitely generated free pseudocomplemented distributive lattices is decidable and exhibit a recursive axiomatization for it (Theorems \ref{Thm : decidable} and \ref{Thm : improved axiomatization}). Notably, this theory coincides with the universal theory of any $\kappa$-generated free pseudocomplemented distributive lattice with $\kappa$ infinite. Consequently, as the universal theory of the countably-generated free algebra of a variety coincides with the set of the so-called admissible universal sentences for that variety \cite[Thm.~2]{CM15} (see also \cite{Ryb97}),
our results can also be phrased in terms of admissibility  (Remark \ref{Rem : the final remark}).

Our main tools are Priestley duality for pseudocomplemented distributive lattices \cite{Pri75} and the description of free pseudocomplemented distributive lattices in terms of their Priestley duals in \cite{Urq73,DG80}. We also rely on the characterization of the  join-irreducibles of the finitely generated free pseudocomplemented distributive lattices in \cite{KS24}. Our strategy, on the other hand, hinges on two very general observations (Theorems \ref{Thm : class of models of universal theory : general} and \ref{Thm : recursively axiomatizable implies decidable}), which hold for each variety $\mathsf{V}$ that is finitely axiomatizable,  locally finite, and of finite type. Let $\mathsf{Th}_\forall(\boldsymbol{F})$ be the universal theory of the countably-generated free algebra $\boldsymbol{F}$ of $\mathsf{V}$. Then the following conditions hold:
\benroman
\item\label{item : 1 : introduction} the models of $\mathsf{Th}_\forall(\boldsymbol{F})$ are the members of $\mathsf{V}$ whose finite subalgebras embed into $\boldsymbol{F}$;
\item\label{item : 2 : introduction} if $\mathsf{Th}_\forall(\boldsymbol{F})$ is recursively axiomatizable, then it is also decidable.
\eroman
As the variety of pseudocomplemented distributive lattices is finitely axiomatizable, locally finite, and of finite type (see, e.g., \cite[Thm.~VIII.3.1]{BD74} and \cite[Thm.~4.55]{Ber11}), the observations above apply to the universal theory of the countably-generated free pseudocomplemented distributive lattice (from now on, simply the ``free algebra''). 

First, we introduce the notion of a poset with a \emph{free skeleton} (Section \ref{Sec : 5}) and show that a finite pseudocomplemented distributive lattice can be embedded into the free algebra if and only if its Priestley dual has a free skeleton (Proposition \ref{Prop : the main proposition}). When coupled with (\ref{item : 1 : introduction}), this yields that the models of the universal theory of the free algebra are precisely the pseudocomplemented distributive lattices whose finite subalgebras have a Priestley dual with a free skeleton (Theorem \ref{Thm : MAIN}). As the latter demand can be rendered as a recursive set of formulas (Proposition \ref{Prop : FSh, DN and free skeleton}), from (\ref{item : 2 : introduction}) it follows that the universal theory of the free pseudocomplemented distributive lattice is decidable (and axiomatized by this set).

\section{Universal theories, free algebras, and decidability}

Let $\III, \HHH, \SSS, \PPP$, and $\PPU$ be the class operators of closure under isomorphic copies, homomorphic images, subalgebras, direct products, and ultraproducts.\ A sentence is said to be \emph{universal} when it is of the form $\forall x_1, \dots, x_n P$, where $P$ is a quantifier-free formula. A class of similar algebras is \emph{universal} when it can be axiomatized by a set of universal sentences or, equivalently, when it is closed under $\III, \SSS$, and $\PPU$ (see, e.g., \cite[Thm.~V.2.20]{BS12}).\ As a consequence, the validity of universal sentences persists under $\III, \SSS$, and $\PPU$. Given a class of similar algebras $\mathsf{K}$, the least universal class containing $\mathsf{K}$ will be denoted by $\UUU(\mathsf{K})$.\ Notably, $\UUU(\mathsf{K}) = \III\SSS\PPU(\mathsf{K})$ (see, e.g., \cite[Thm.~V.2.20]{BS12}).

Let $\mathsf{K}$ be a class of similar algebras. Given a sentence $P$, we write $\mathsf{K}\vDash P$ to indicate that $P$ is valid in all the members of $\mathsf{K}$. When $\mathsf{K} = \{ \A \}$, we will  write $\A \vDash P$ as a shorthand for $\mathsf{K} \vDash P$. An algebra $\A$ is said to be a \emph{model} of a set of sentences $\Sigma$ when $\A \vDash P$ for every $P \in \Sigma$.  The set of universal sentences valid in $\mathsf{K}$ is called the \emph{universal theory} of $\mathsf{K}$ and will be denoted by $\mathsf{Th}_\forall(\mathsf{K})$. When $\mathsf{K} = \{ \A \}$, we will write  $\mathsf{Th}_\forall(\A)$ as a shorthand for $\mathsf{Th}_\forall(\mathsf{K})$. Clearly, the class of models of $\mathsf{Th}_\forall(\mathsf{K})$ coincides with $\UUU(\mathsf{K})$.

Given an algebra $\A$ and a set $X \subseteq A$, we denote by $\mathsf{Sg}^\A(X)$ the subuniverse of $\A$ generated by $X$. A subalgebra $\B$ of $\A$ is said to be \emph{finitely generated} when there exists a finite $X \subseteq A$ such that $B = \mathsf{Sg}^\A(X)$.
 We will make use of the following observation.

\begin{Proposition}\label{Prop : embedding into Pu}
Let $\mathsf{K} \cup \{ \A \}$ be a class of similar algebras.\ The following conditions hold:
\benroman
\item\label{item : 1 : Prop : embedding into Pu}  if every finitely generated subalgebra of $\A$ belongs to $\III\SSS(\mathsf{K})$, then $\A \in \UUU(\mathsf{K})$;
\item\label{item : 2 : Prop : embedding into Pu}  if $\A$ is finite and $\A \in \UUU(\mathsf{K})$, then $\A \in \III\SSS(\mathsf{K})$.
\eroman
\end{Proposition}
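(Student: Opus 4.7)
The plan relies on the representation $\UUU(\mathsf{K}) = \III\SSS\PPU(\mathsf{K})$ noted in the excerpt, together with {\L}o\'s's theorem. Part (\ref{item : 1 : Prop : embedding into Pu}) requires building an embedding of $\A$ into an ultraproduct of members of $\mathsf{K}$, while part (\ref{item : 2 : Prop : embedding into Pu}) conversely extracts, from such an embedding, a single member of $\mathsf{K}$ into which a finite $\A$ embeds.

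For (\ref{item : 1 : Prop : embedding into Pu}), I would take as index set the collection $I$ of finite subsets of $A$ and, for each $X \in I$, invoke the hypothesis to pick an embedding $f_X : \mathsf{Sg}^\A(X) \hookrightarrow \C_X$ with $\C_X \in \mathsf{K}$. The cones $\hat{X} := \{ Y \in I : X \subseteq Y \}$ are non-empty and closed under finite intersections (since $\widehat{X \cup Y} \subseteq \hat{X} \cap \hat{Y}$), hence generate a proper filter on $I$; extend it to an ultrafilter $U$. Define $g : A \to \prod_{X \in I} \C_X$ by setting $g(a)(X) = f_X(a)$ whenever $a \in \mathsf{Sg}^\A(X)$ (choosing $g(a)(X)$ arbitrarily otherwise), and let $h : A \to \prod_{X \in I} \C_X / U$ be the composition with the quotient map. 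The key observation is that for any basic $n$-ary operation $\sigma$ and any $a_1, \dots, a_n \in A$, the cone associated with $\{ a_1, \dots, a_n, \sigma^\A(a_1, \dots, a_n) \}$ lies in $U$, and on this cone $g$ preserves $\sigma$ because each $f_X$ does; this yields that $h$ is a homomorphism. Injectivity is analogous: for distinct $a, b \in A$, on the cone above $\{ a, b \}$ the tuples $g(a)$ and $g(b)$ differ componentwise since each $f_X$ is injective.

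For (\ref{item : 2 : Prop : embedding into Pu}), I would encode the isomorphism type of $\A$ as a single existential sentence. Listing $A = \{ a_1, \dots, a_n \}$, let $\varphi(x_1, \dots, x_n)$ be the conjunction of (a) all equations $\sigma(x_{i_1}, \dots, x_{i_k}) = x_j$ that record the operation table of $\A$ (including nullary operations as constants) and (b) the inequations $x_i \neq x_j$ for $i \neq j$. By construction, a similar algebra $\B$ satisfies $\exists \vec{x} \, \varphi(\vec{x})$ if and only if $\A \in \III\SSS(\{ \B \})$. Assuming $\A \in \UUU(\mathsf{K}) = \III\SSS\PPU(\mathsf{K})$, some ultraproduct $\prod_i \B_i / U$ of members of $\mathsf{K}$ satisfies $\exists \vec{x} \, \varphi(\vec{x})$; {\L}o\'s's theorem then forces some $\B_i \in \mathsf{K}$ to satisfy it, whence $\A \in \III\SSS(\mathsf{K})$.

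The main delicacy lies in (\ref{item : 1 : Prop : embedding into Pu}): the arbitrary choices outside each $\mathsf{Sg}^\A(X)$ must be organized so that the preservation and injectivity verifications genuinely take place on ultrafilter-large sets. Part (\ref{item : 2 : Prop : embedding into Pu}), by contrast, is conceptually immediate once the defining existential sentence of a finite algebra is written down, its only nontrivial ingredient being {\L}o\'s's theorem.
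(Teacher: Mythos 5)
Your proposal is correct and follows essentially the same route as the paper: the paper simply cites \cite[Thm.~V.2.14]{BS12} for part (i) and \L o\'s' Theorem for part (ii), and your argument is a correct unpacking of exactly those two ingredients (the cone-ultrafilter embedding into an ultraproduct, and the diagram-as-existential-sentence reduction). No gaps.
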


\begin{proof}
Recall that $\UUU(\mathsf{K}) = \III\SSS\PPU(\mathsf{K})$. For (\ref{item : 1 : Prop : embedding into Pu}), see \cite[Thm.\ V.2.14]{BS12}. For (\ref{item : 2 : Prop : embedding into Pu}), consider a finite $\A \in \UUU(\mathsf{K}) = \III\SSS\PPU(\mathsf{K})$. By \L o\'s' Theorem \cite[Thm.\ V.2.9]{BS12} we obtain $\A \in \III\SSS(\mathsf{K})$ as desired.
\end{proof}

A class of similar algebras is said to be a \emph{variety} when it can be axiomatized by a set of equations or, equivalently, when it is closed under $\HHH, \SSS$, and $\PPP$ (see, e.g., \cite[Thm.~II.11.9]{BS12}). While every variety is a universal class, the converse is not true in general. A class of algebras $\mathsf{K}$ is said to be \emph{locally finite} when $\mathsf{Sg}^\A(X)$ is finite for every $\A \in \mathsf{K}$ and finite $X \subseteq A$. We will rely on the next observation.

\begin{Theorem}\label{Thm : class of models of universal theory : general}
Let $\mathsf{V}$ be a locally finite variety and $\mathsf{K} \subseteq \mathsf{V}$. Then the class of models of $\mathsf{Th}_\forall(\mathsf{K})$ is
\[
\{ \A \in \mathsf{V} : \B \in \III\SSS(\mathsf{K}) \text{ for every finite subalgebra $\B$ of $\A$}\}.
\]
\end{Theorem}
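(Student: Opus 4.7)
The plan is to recall that the class of models of $\mathsf{Th}_\forall(\mathsf{K})$ coincides with the universal closure $\UUU(\mathsf{K})$, as noted in the discussion before Proposition \ref{Prop : embedding into Pu}. Therefore, it suffices to prove the equality
\[
\UUU(\mathsf{K}) = \{ \A \in \mathsf{V} : \B \in \III\SSS(\mathsf{K}) \text{ for every finite subalgebra $\B$ of $\A$}\}.
\]
I would split this into the two set inclusions and base both on Proposition \ref{Prop : embedding into Pu}, together with the locally finite hypothesis on $\mathsf{V}$.

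For the inclusion $\supseteq$, take $\A \in \mathsf{V}$ such that every finite subalgebra of $\A$ is in $\III\SSS(\mathsf{K})$. Since $\mathsf{V}$ is locally finite and $\A \in \mathsf{V}$, each finitely generated subalgebra of $\A$ is finite, and hence belongs to $\III\SSS(\mathsf{K})$ by assumption. Applying Proposition \ref{Prop : embedding into Pu}(\ref{item : 1 : Prop : embedding into Pu}) then yields $\A \in \UUU(\mathsf{K})$.

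For the inclusion $\subseteq$, let $\A \in \UUU(\mathsf{K})$. First, since $\mathsf{V}$ is a variety and, a fortiori, a universal class containing $\mathsf{K}$, we have $\UUU(\mathsf{K}) \subseteq \mathsf{V}$, so $\A \in \mathsf{V}$. Next, let $\B$ be any finite subalgebra of $\A$. Then $\B \in \SSS(\A) \subseteq \UUU(\mathsf{K})$, since universal classes are closed under $\SSS$. Because $\B$ is finite, Proposition \ref{Prop : embedding into Pu}(\ref{item : 2 : Prop : embedding into Pu}) gives $\B \in \III\SSS(\mathsf{K})$, as required.

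No step here is truly hard: the whole argument is essentially packaging Proposition \ref{Prop : embedding into Pu} together with the locally finite hypothesis. The only subtle point worth flagging is the equivalence, under local finiteness of $\mathsf{V}$, between \emph{finite} and \emph{finitely generated} subalgebras of members of $\mathsf{V}$; this is what bridges the formulation of Proposition \ref{Prop : embedding into Pu}(\ref{item : 1 : Prop : embedding into Pu}) (finitely generated subalgebras) and that of the statement to be proved (finite subalgebras).
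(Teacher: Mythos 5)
Your proposal is correct and follows essentially the same route as the paper: both reduce the statement to the equality with $\UUU(\mathsf{K})$, prove the inclusion $\subseteq$ via closure of universal classes under $\SSS$ together with Proposition \ref{Prop : embedding into Pu}(\ref{item : 2 : Prop : embedding into Pu}), and prove $\supseteq$ via local finiteness and Proposition \ref{Prop : embedding into Pu}(\ref{item : 1 : Prop : embedding into Pu}). No issues.
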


\begin{proof}
Recall that the class of models of $\mathsf{Th}_\forall(\mathsf{K})$ is $\UUU(\mathsf{K})$. Therefore, it suffices to show that
\[
\UUU(\mathsf{K}) = \{ \A \in \mathsf{V} : \B \in \III\SSS(\mathsf{K}) \text{ for every finite subalgebra $\B$ of $\A$}\}.
\]

To prove the inclusion from left to right, consider $\A \in \UUU(\mathsf{K})$ and let $\B$ be a finite subalgebra of $\A$. Then $\B \in \SSS\UUU(\mathsf{K})$. Since $\UUU(\mathsf{K})$ is a universal class, it is closed under $\SSS$ and, therefore, $\B \in \UUU(\mathsf{K})$. As $\B$ is finite, we can apply Proposition \ref{Prop : embedding into Pu}(\ref{item : 2 : Prop : embedding into Pu}), obtaining $\B \in \III\SSS(\mathsf{K})$. Lastly, recall that $\mathsf{V}$ is a variety containing $\mathsf{K}$ by assumption. Therefore, $\UUU(\mathsf{K}) \subseteq \mathsf{V}$. Together with $\A \in \UUU(\mathsf{K})$, this yields $\A \in \mathsf{V}$ as desired.

Next we prove the inclusion from right to left. Let $\A \in \mathsf{V}$ be such that $\B \in \III\SSS(\mathsf{K})$ for every finite subalgebra $\B$ of $\A$. Since $\mathsf{K} \subseteq \mathsf{V}$ and $\mathsf{V}$ is locally finite by assumption, this implies that $\B \in \III\SSS(\mathsf{K})$ for every finitely generated subalgebra $\B$ of $\A$. By Proposition \ref{Prop : embedding into Pu}(\ref{item : 1 : Prop : embedding into Pu}) this yields $\A \in \UUU(\mathsf{K})$.
\end{proof}

A notable feature of varieties is that they contain \emph{free algebras} with arbitrarily large sets of free generators, as we proceed to recall.  Let $\mathsf{V}$ be a variety and $\kappa$ a positive cardinal. We denote the term algebra in the language of $\mathsf{V}$ with set of variables
$\{ x_\alpha : \alpha < \kappa \}$ by $\boldsymbol{T}_\mathsf{V}(\kappa)$. Then the \emph{free algebra} $\boldsymbol{F}_\mathsf{V}(\kappa)$ of $\mathsf{V}$ with $\kappa$ free generators is the quotient of $\boldsymbol{T}_\mathsf{V}(\kappa)$ under the congruence
\[
\theta_\kappa \coloneqq \{ \langle t, s\rangle \in T_\mathsf{V}(\kappa) \times T_\mathsf{V}(\kappa) : \mathsf{V} \vDash t \thickapprox s\}.
\]
As we mentioned, the free algebra $\boldsymbol{F}_\mathsf{V}(\kappa)$ always belongs to $\mathsf{V}$ (see, e.g., \cite[Thm.~II.10.12]{BS12}). Moreover, the set of free generators of $\boldsymbol{F}_\mathsf{V}(\kappa)$ is $\{ x_\alpha / \theta_\kappa : \alpha < \kappa \}$.  The next results collect some well-known properties of free algebras.

\begin{Proposition}\label{Prop : free algebra : technical prop}
 The following conditions hold for every variety $\mathsf{V}$ and positive cardinal $\kappa$: 
\benroman
\item\label{item : 1 : free algebra : technical prop} the free algebra $\boldsymbol{F}_\mathsf{V}(\kappa)$ is the direct limit  of the direct system whose objects are the subalgebras $\A_X$ of $\boldsymbol{F}_\mathsf{V}(\kappa)$ generated by a finite nonempty set $X \subseteq  \{ x_\alpha / \theta_\kappa : \alpha < \kappa \}$ and whose arrows are the inclusion maps $i \colon \A_Y \to \A_Z$ for $Y \subseteq Z$;
\item\label{item : 2 : free algebra : technical prop} for every $X \subseteq \{ x_\alpha / \theta_\kappa : \alpha < \kappa \}$ the subalgebra of $\boldsymbol{F}_\mathsf{V}(\kappa)$ generated by $X$ is isomorphic to $\boldsymbol{F}_\mathsf{V}(\lvert X \rvert)$.
\eroman
\end{Proposition}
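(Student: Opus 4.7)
The plan is to establish (ii) via the universal property of $\boldsymbol{F}_\mathsf{V}(\kappa)$, and then to derive (i) by checking directly that $\boldsymbol{F}_\mathsf{V}(\kappa)$ together with the inclusions $\A_X \hookrightarrow \boldsymbol{F}_\mathsf{V}(\kappa)$ satisfies the universal property of the colimit of the stated directed system.

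For (ii), write $G \coloneqq \{ x_\alpha / \theta_\kappa : \alpha < \kappa \}$, fix $X \subseteq G$, and set $\C \coloneqq \mathsf{Sg}^{\boldsymbol{F}_\mathsf{V}(\kappa)}(X)$. Since $\C$ is generated by $X$, it suffices to show that $X$ is a set of free generators of $\C$ in $\mathsf{V}$, which will give $\C \cong \boldsymbol{F}_\mathsf{V}(|X|)$. To this end, given $\B \in \mathsf{V}$ with $B \neq \emptyset$ and a map $f \colon X \to B$, I would extend $f$ arbitrarily to a map $\tilde{f} \colon G \to B$, lift $\tilde{f}$ to a homomorphism $h \colon \boldsymbol{F}_\mathsf{V}(\kappa) \to \B$ by the universal property of the latter, and restrict $h$ to $\C$. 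Uniqueness of the restriction as an extension of $f$ is automatic because $X$ generates $\C$. The case $B = \emptyset$ forces $X = \emptyset$, where the statement is vacuous.

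For (i), the poset of finite nonempty subsets of $G$ ordered by inclusion is directed (the union of two such sets is again finite and nonempty), so the $\A_X$ with their inclusions form a directed system. Every element of $\boldsymbol{F}_\mathsf{V}(\kappa)$ is the equivalence class of some term $t$, and $t$ involves only finitely many variables, so choosing a finite nonempty $X \subseteq G$ containing those variables yields $t/\theta_\kappa \in \A_X$; hence $F_\mathsf{V}(\kappa) = \bigcup_X \A_X$ as sets. Given a cocone $\{ f_X \colon \A_X \to \B \}$ over this system, I would define $h \colon \boldsymbol{F}_\mathsf{V}(\kappa) \to \B$ by $h(a) \coloneqq f_X(a)$ for any $X$ with $a \in \A_X$: compatibility of the $f_X$ makes $h$ well-defined, and choosing a common $\A_X$ containing the arguments of any fundamental operation shows that $h$ is a homomorphism; the required equality $h \circ j_X = f_X$ (where $j_X$ denotes the inclusion) is then immediate, and uniqueness follows because the $\A_X$ exhaust $\boldsymbol{F}_\mathsf{V}(\kappa)$.

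Neither statement should pose a serious obstacle: both are standard consequences of the defining universal property of free algebras together with the observation that free algebras are unions of their finitely generated subalgebras. The one point to handle with care is the requirement that $X$ be \emph{nonempty} in (i), which is needed to guarantee that each $\A_X$ is a legitimate subalgebra even when the signature lacks constant symbols; this restriction is harmless because any single free generator can be adjoined to an offending $X$ without enlarging the index by more than one element, and the resulting system still covers $\boldsymbol{F}_\mathsf{V}(\kappa)$.
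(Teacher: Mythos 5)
Your proposal is correct and is essentially the standard elaboration of the paper's own (one-line) proof, which justifies (i) by noting that $\boldsymbol{F}_\mathsf{V}(\kappa)$ is generated by $\{x_\alpha/\theta_\kappa : \alpha < \kappa\}$ and (ii) by the construction of the free algebra. Your verification of the colimit universal property and of the freeness of $X$ in $\mathsf{Sg}^{\boldsymbol{F}_\mathsf{V}(\kappa)}(X)$ via extending maps on generators is exactly what those appeals to ``generated by'' and ``by construction'' amount to.
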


\begin{proof}
Condition (\ref{item : 1 : free algebra : technical prop}) holds because $\boldsymbol{F}_\mathsf{V}(\kappa)$ is generated by $\{x_\alpha /\theta : \alpha < \kappa \}$, while  (\ref{item : 2 : free algebra : technical prop}) holds by construction.
\end{proof}

As a consequence of  (\ref{item : 2 : free algebra : technical prop}), each algebra $\A_X$ in condition (\ref{item : 1 : free algebra : technical prop}) is isomorphic to $\boldsymbol{F}_\mathsf{V}(n)$ for some $n \in \mathbb{Z}^+$. Therefore, from  (\ref{item : 1 : free algebra : technical prop}) we deduce the following.

\begin{Corollary}\label{Cor : fin gen free algebras cover everything}
Let $\mathsf{V}$ be a variety and $\kappa$ a positive cardinal. Then for each finitely generated subalgebra $\A$ of $\boldsymbol{F}_\mathsf{V}(\kappa)$ there exists $n \in \mathbb{Z}^+$ such that $\A$ embeds into $\boldsymbol{F}_\mathsf{V}(n)$.
\end{Corollary}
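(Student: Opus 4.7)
The plan is to exploit the two parts of Proposition \ref{Prop : free algebra : technical prop} exactly as the sentence preceding the corollary already hints. Let $\A$ be a finitely generated subalgebra of $\boldsymbol{F}_\mathsf{V}(\kappa)$, say $\A = \mathsf{Sg}^{\boldsymbol{F}_\mathsf{V}(\kappa)}(Y)$ for some finite $Y \subseteq F_\mathsf{V}(\kappa)$. The first step is to find a finite nonempty $X \subseteq \{ x_\alpha / \theta_\kappa : \alpha < \kappa \}$ such that $Y \subseteq \A_X$, so that $\A \subseteq \A_X$.

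This can be done directly, without invoking general facts about direct limits. Each element of $Y$ is a $\theta_\kappa$-class of a term $t \in T_\mathsf{V}(\kappa)$, and such a term mentions only finitely many variables $x_{\alpha_1}, \dots, x_{\alpha_k}$. Taking the union over the (finitely many) elements of $Y$, we obtain a finite set $X_0 \subseteq \{ x_\alpha / \theta_\kappa : \alpha < \kappa \}$ with $Y \subseteq \mathsf{Sg}^{\boldsymbol{F}_\mathsf{V}(\kappa)}(X_0)$. If $X_0 = \emptyset$ (which can only happen when $Y$ consists entirely of constants), one enlarges it by adding one generator to ensure nonemptiness; set $X \coloneqq X_0 \cup \{x_0/\theta_\kappa\}$. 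In either case, $\A \subseteq \A_X$ with $X$ finite and nonempty. (Alternatively, one can cite Proposition \ref{Prop : free algebra : technical prop}(\ref{item : 1 : free algebra : technical prop}): since $\boldsymbol{F}_\mathsf{V}(\kappa)$ is the direct limit of the $\A_X$ under inclusions, each of the finitely many elements of $Y$ lies in some $\A_{X_i}$, and the union of these finitely many $X_i$'s produces the desired $X$.)

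The second step is immediate: by Proposition \ref{Prop : free algebra : technical prop}(\ref{item : 2 : free algebra : technical prop}), $\A_X \cong \boldsymbol{F}_\mathsf{V}(\lvert X \rvert)$. Setting $n \coloneqq \lvert X \rvert \in \mathbb{Z}^+$, the inclusion $\A \hookrightarrow \A_X$ composed with this isomorphism yields an embedding $\A \hookrightarrow \boldsymbol{F}_\mathsf{V}(n)$, as required.

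There is no real obstacle here: the content of the corollary is entirely packaged in Proposition \ref{Prop : free algebra : technical prop}, and the only mild annoyance is the bookkeeping to ensure that $X$ may be chosen nonempty (so that it fits the shape of the direct system described in part (\ref{item : 1 : free algebra : technical prop})). This is the reason the hypothesis ``$\kappa$ positive'' appears in the statement.
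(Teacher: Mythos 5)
Your proposal is correct and follows essentially the same route as the paper: the paper derives the corollary directly from Proposition \ref{Prop : free algebra : technical prop}, using part (\ref{item : 1 : free algebra : technical prop}) to place a finitely generated subalgebra inside some $\A_X$ with $X$ finite and nonempty, and part (\ref{item : 2 : free algebra : technical prop}) to identify $\A_X$ with $\boldsymbol{F}_\mathsf{V}(\lvert X\rvert)$. Your explicit handling of the case $X_0=\varnothing$ is a harmless extra detail the paper leaves implicit.
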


Although the following observation is folklore, we sketch a proof for the sake of completeness.

\begin{Theorem}\label{Thm : universal theory - free algebras}
Let $\mathsf{V}$ be a variety. Then for every infinite cardinal $\kappa$ we have
\[
\mathsf{Th}_\forall(\boldsymbol{F}_\mathsf{V}(\aleph_0)) = \mathsf{Th}_\forall(\boldsymbol{F}_\mathsf{V}(\kappa)) = \mathsf{Th}_\forall( \{ \boldsymbol{F}_\mathsf{V}(n) : n \in \mathbb{Z}^+ \}).
\]
\end{Theorem}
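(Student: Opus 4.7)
The plan is to prove the equality by a short sandwich argument using only the preservation of universal sentences under subalgebras together with Proposition \ref{Prop : embedding into Pu} and Corollary \ref{Cor : fin gen free algebras cover everything}. Since universal sentences persist under $\III$ and $\SSS$, to verify $\mathsf{Th}_\forall(\mathsf{K}_1) \subseteq \mathsf{Th}_\forall(\mathsf{K}_2)$ it is enough to show $\mathsf{K}_2 \subseteq \III\SSS(\mathsf{K}_1)$ or, more generally, $\mathsf{K}_2 \subseteq \UUU(\mathsf{K}_1)$. So the goal reduces to placing each of the three classes inside $\UUU$ of each of the others.

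First I would fix an infinite cardinal $\kappa$ and verify that $\mathsf{Th}_\forall(\boldsymbol{F}_\mathsf{V}(\kappa)) \subseteq \mathsf{Th}_\forall(\{ \boldsymbol{F}_\mathsf{V}(n) : n \in \mathbb{Z}^+ \})$. For this, note that by Proposition \ref{Prop : free algebra : technical prop}(\ref{item : 2 : free algebra : technical prop}), since $n \leq \kappa$, the subalgebra of $\boldsymbol{F}_\mathsf{V}(\kappa)$ generated by any $n$ distinct free generators is isomorphic to $\boldsymbol{F}_\mathsf{V}(n)$. Hence $\boldsymbol{F}_\mathsf{V}(n) \in \III\SSS(\boldsymbol{F}_\mathsf{V}(\kappa))$ for every $n \in \mathbb{Z}^+$, and the inclusion of universal theories follows immediately.

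For the converse inclusion $\mathsf{Th}_\forall(\{ \boldsymbol{F}_\mathsf{V}(n) : n \in \mathbb{Z}^+ \}) \subseteq \mathsf{Th}_\forall(\boldsymbol{F}_\mathsf{V}(\kappa))$, I would apply Corollary \ref{Cor : fin gen free algebras cover everything}: every finitely generated subalgebra of $\boldsymbol{F}_\mathsf{V}(\kappa)$ embeds into some $\boldsymbol{F}_\mathsf{V}(n)$, hence belongs to $\III\SSS(\{\boldsymbol{F}_\mathsf{V}(n) : n \in \mathbb{Z}^+ \})$. Proposition \ref{Prop : embedding into Pu}(\ref{item : 1 : Prop : embedding into Pu}) then yields $\boldsymbol{F}_\mathsf{V}(\kappa) \in \UUU(\{\boldsymbol{F}_\mathsf{V}(n) : n \in \mathbb{Z}^+\})$, which gives the desired inclusion of universal theories.

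Combining these two inclusions shows $\mathsf{Th}_\forall(\boldsymbol{F}_\mathsf{V}(\kappa)) = \mathsf{Th}_\forall(\{ \boldsymbol{F}_\mathsf{V}(n) : n \in \mathbb{Z}^+ \})$ for \emph{every} infinite cardinal $\kappa$, and specializing to $\kappa = \aleph_0$ gives the full chain of equalities. There is no real obstacle here; the only slightly delicate point is making sure to invoke Proposition \ref{Prop : free algebra : technical prop}(\ref{item : 2 : free algebra : technical prop}) with finite $X$ so that the generated subalgebra is $\boldsymbol{F}_\mathsf{V}(n)$ for some $n \in \mathbb{Z}^+$, which is exactly what the statement restricts to.
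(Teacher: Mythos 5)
Your proposal is correct and follows essentially the same route as the paper: both directions rest on the same two ingredients, namely that each $\boldsymbol{F}_\mathsf{V}(n)$ sits inside $\boldsymbol{F}_\mathsf{V}(\kappa)$ as the subalgebra generated by $n$ free generators (Proposition \ref{Prop : free algebra : technical prop}(\ref{item : 2 : free algebra : technical prop})), and that every finitely generated subalgebra of $\boldsymbol{F}_\mathsf{V}(\kappa)$ embeds into some $\boldsymbol{F}_\mathsf{V}(n)$, so that Proposition \ref{Prop : embedding into Pu}(\ref{item : 1 : Prop : embedding into Pu}) places $\boldsymbol{F}_\mathsf{V}(\kappa)$ in $\UUU(\{\boldsymbol{F}_\mathsf{V}(n) : n \in \mathbb{Z}^+\})$. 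The only cosmetic difference is that you prove the equality for an arbitrary infinite $\kappa$ and then specialize to $\aleph_0$, whereas the paper threads $\boldsymbol{F}_\mathsf{V}(\aleph_0)$ into a single chain of inclusions; the arguments are otherwise identical.
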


\begin{proof}
From Proposition \ref{Prop : free algebra : technical prop}(\ref{item : 2 : free algebra : technical prop}) it follows that $\boldsymbol{F}_\mathsf{V}(\aleph_0)$ embeds into $\boldsymbol{F}_\mathsf{V}(\kappa)$ and that each $\boldsymbol{F}_\mathsf{V}(n)$ embeds into $\boldsymbol{F}_\mathsf{V}(\aleph_0)$. Since the validity of universal sentences persists under the formation of subalgebras and isomorphic copies, this implies
\[
\mathsf{Th}_\forall(\boldsymbol{F}_\mathsf{V}(\kappa)) \subseteq \mathsf{Th}_\forall(\boldsymbol{F}_\mathsf{V}(\aleph_0)) \subseteq \mathsf{Th}_\forall( \{ \boldsymbol{F}_\mathsf{V}(n) : n \in \mathbb{Z}^+ \}).
\]
Therefore, it only remains to show that 
\[
\mathsf{Th}_\forall( \{ \boldsymbol{F}_\mathsf{V}(n) : n \in \mathbb{Z}^+ \}) \subseteq \mathsf{Th}_\forall(\boldsymbol{F}_\mathsf{V}(\kappa)).
\]
To this end, consider $P \in \mathsf{Th}_\forall( \{ \boldsymbol{F}_\mathsf{V}(n) : n \in \mathbb{Z}^+ \})$. We need to prove that $\boldsymbol{F}_\mathsf{V}(\kappa) \vDash P$. Since every finitely generated subalgebra of $\boldsymbol{F}_\mathsf{V}(\kappa)$ embeds into some $\boldsymbol{F}_\mathsf{V}(n)$ by Corollary \ref{Cor : fin gen free algebras cover everything}, we can apply Proposition \ref{Prop : embedding into Pu}(\ref{item : 1 : Prop : embedding into Pu}), obtaining 
\[
\boldsymbol{F}_\mathsf{V}(\kappa) \in \UUU(\{ \boldsymbol{F}_\mathsf{V}(n) : n \in \mathbb{Z}^+ \}).
\]
As $\{ \boldsymbol{F}_\mathsf{V}(n) : n \in \mathbb{Z}^+ \} \vDash P$ by assumption and $\UUU(\{ \boldsymbol{F}_\mathsf{V}(n) : n \in \mathbb{Z}^+ \})$ is the class of models of the universal theory of $\{ \boldsymbol{F}_\mathsf{V}(n) : n \in \mathbb{Z}^+ \}$, this implies $\boldsymbol{F}_\mathsf{V}(\kappa) \vDash P$.
\end{proof}

\begin{Remark}
Although we will not need it, we remark that Theorem \ref{Thm : universal theory - free algebras} can be strengthened as follows. Let $\mathsf{V}$ be a variety. Then for every infinite cardinal $\kappa$ the free algebras $\boldsymbol{F}_\mathsf{V}(\aleph_0)$ and $\boldsymbol{F}_\mathsf{V}(\kappa)$ are \emph{elementarily equivalent}, that is, satisfy exactly the same sentences \cite[Thm.\ 3.3(ii)]{TarskiVaught}. This is a consequence of the so-called Tarski-Vaught Test (see, e.g., \cite[Prop.~3.1.2]{CK90}), which can be applied to show that the natural embedding of $\boldsymbol{F}_\mathsf{V}(\aleph_0)$ into $\boldsymbol{F}_\mathsf{V}(\kappa)$ is elementary. 
\qed
\end{Remark}

In what follows we will assume some familiarity with the basics of computability theory (see, e.g., \cite{HMU07}). As we were unable to find a proof of the next result in the literature, we decided to sketch one for the sake of completeness.

\begin{Theorem}\label{Thm : recursively axiomatizable implies decidable}
Let $\mathsf{V}$ be a finitely axiomatizable and locally finite variety of finite type.\ If $\mathsf{Th}_\forall(\boldsymbol{F}_{\mathsf{V}}(\aleph_0))$ is recursively axiomatizable, then it is also decidable.
\end{Theorem}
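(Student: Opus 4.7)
The plan is to show that both $\mathsf{Th}_\forall(\boldsymbol{F}_\mathsf{V}(\aleph_0))$ and its complement (inside the recursive set of universal sentences in the finite type of $\mathsf{V}$) are recursively enumerable; a decidable set is precisely one whose membership and non-membership are both semi-decidable. Semi-decidability of $\mathsf{Th}_\forall(\boldsymbol{F}_\mathsf{V}(\aleph_0))$ is immediate from the recursive axiomatizability hypothesis together with the completeness theorem for first-order logic. Thus the real work lies in enumerating failures.

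To enumerate failures, I would exploit Theorem \ref{Thm : universal theory - free algebras}: $\forall \bar{x}\, P(\bar{x})$ fails in $\boldsymbol{F}_\mathsf{V}(\aleph_0)$ precisely when there is some $n \in \mathbb{Z}^+$ and tuple of terms $\bar{t}$ in $n$ variables such that $P(\bar{t})$, interpreted in $\boldsymbol{F}_\mathsf{V}(n)$ by evaluating each variable at the class of its term, fails. Because $P$ is quantifier-free, hence a Boolean combination of equations, and because $\boldsymbol{F}_\mathsf{V}(n) \vDash u \thickapprox v$ iff $\mathsf{V} \vDash u \thickapprox v$, this evaluation reduces to finitely many queries to the equational theory of $\mathsf{V}$. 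Consequently, the problem boils down to showing that the equational theory of $\mathsf{V}$ is decidable; granted that, the failure-enumeration algorithm just iterates over all universal sentences and all substitutions of terms for their variables, evaluating in parallel.

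For decidability of the equational theory of $\mathsf{V}$, the ``valid'' side is standard: since $\mathsf{V}$ is finitely axiomatized in a finite type, the set of equational consequences of the axioms is recursively enumerable via Birkhoff's completeness theorem. The nontrivial direction—and the main obstacle—is enumerating the invalid equations effectively. Here I would use local finiteness: if $\mathsf{V} \not\vDash u \thickapprox v$ and $n$ is the number of variables appearing in $u, v$, then this equation already fails in $\boldsymbol{F}_\mathsf{V}(n)$, which is a \emph{finite} member of $\mathsf{V}$. So it suffices to enumerate all finite algebras of the (finite) type of $\mathsf{V}$, test each against the finite axiomatization of $\mathsf{V}$ (a mechanical check, since the algebra is finite), and for each model of the axioms check whether $u \thickapprox v$ fails. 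If the equation is invalid, this procedure eventually produces a witness. Combining the two semi-decision procedures yields decidability of the equational theory of $\mathsf{V}$ and, via the reduction of the second paragraph, of $\mathsf{Th}_\forall(\boldsymbol{F}_\mathsf{V}(\aleph_0))$.
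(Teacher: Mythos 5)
Your proof is correct, and its overall skeleton coincides with the paper's: both arguments note that recursive axiomatizability makes $\mathsf{Th}_\forall(\boldsymbol{F}_\mathsf{V}(\aleph_0))$ recursively enumerable, so that it suffices to recursively enumerate the universal sentences that \emph{fail}, and both reduce failure to the finitely generated free algebras via Theorem~\ref{Thm : universal theory - free algebras}. Where you genuinely diverge is in how the test ``does $P$ fail in $\boldsymbol{F}_\mathsf{V}(n)$?'' is made effective. The paper constructs each finite algebra $\boldsymbol{F}_\mathsf{V}(n)$ mechanically --- enumerating the equations valid in $\mathsf{V}$ until a finite set of terms closed under the operations modulo provable equality is found, and then passing to the least congruence whose quotient lies in $\mathsf{V}$ --- and afterwards checks $P$ by brute force over all assignments into that finite algebra. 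You instead represent elements of the free algebra by terms, observe that atomic formulas evaluate according to the equational theory of $\mathsf{V}$, and prove that this equational theory is decidable by dovetailing Birkhoff derivations from the finite axiomatization against a search through finite models of that axiomatization, local finiteness guaranteeing that every non-valid equation already has a finite countermodel, namely $\boldsymbol{F}_\mathsf{V}(n)$ itself. Your route is somewhat more modular: it isolates the classical fact that a finitely axiomatizable, locally finite variety of finite type has a decidable equational theory, and it sidesteps the slightly delicate step in the paper where one must argue that the quotient of the constructed term algebra by the least congruence with quotient in $\mathsf{V}$ really is $\boldsymbol{F}_\mathsf{V}(n)$. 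The paper's route, in exchange, delivers the finite free algebras as concrete objects, after which testing a universal sentence is a bounded computation rather than an unbounded (but still recursively enumerable) search over term tuples. All three hypotheses --- finite axiomatizability, local finiteness, and finite type --- play the same roles in both arguments.
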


\begin{proof}
Since $\mathsf{Th}_\forall(\boldsymbol{F}_{\mathsf{V}}(\aleph_0))$ is recursively axiomatizable, it is also recursively enumerable (see, e.g., \cite[Thm.~35I]{End01}). Therefore, in order to prove that $\mathsf{Th}_\forall(\boldsymbol{F}_{\mathsf{V}}(\aleph_0))$ is decidable, it suffices to show that the following set is recursively enumerable
\[
\{ P : P\text{ is a universal sentence such that }\boldsymbol{F}_{\mathsf{V}}(\aleph_0) \nvDash P \}. 
\]
In view of Theorem \ref{Thm : universal theory - free algebras}, the above set coincides with
\[
X \coloneqq \{ P : P\text{ is a universal sentence such that }\boldsymbol{F}_{\mathsf{V}}(n) \nvDash P \text{ for some }n \in \mathbb{Z}^+\}. 
\]

We turn our attention to proving that $X$ is recursively enumerable. Recall that each $\boldsymbol{F}_{\mathsf{V}}(n)$ is finite because $\mathsf{V}$ is locally finite by assumption. Therefore, if we can construct mechanically the various $\boldsymbol{F}_\mathsf{V}(n)$, we are done because if a universal sentence $P$ fails in some $\boldsymbol{F}_\mathsf{V}(n)$, we will be able to determine that this is the case in a finite amount of time by constructing the finite algebra $\boldsymbol{F}_\mathsf{V}(1)$ and testing if $P$ fails in it, then do the same for the finite algebra $\boldsymbol{F}_\mathsf{V}(2)$ if this is not the case and so on, until we reach some $n \in \mathbb{Z}^+$ such that $\boldsymbol{F}_\mathsf{V}(n) \nvDash P$.

Consequently, it only remains to show that each $\boldsymbol{F}_{\mathsf{V}}(n)$ can be constructed mechanically. Since $\mathsf{V}$ is finitely axiomatizable, we can start enumerating all the equations valid in $\mathsf{V}$ (see, e.g., \cite[Thm.~35I]{End01}).
 As $\boldsymbol{F}_{\mathsf{V}}(n)$ is finite and $\mathsf{V}$ of finite type, in a finite amount of time we will obtain a finite set of terms $\{ t_1, \dots, t_m \} \subseteq T_\mathsf{V}(n)$ containing $x_1, \dots, x_n$ such that for each basic $k$-ary operation $f$ and $t_{i_1}, \dots, t_{i_k}$ there exists $t_{j}$ with
\[
\mathsf{V} \vDash f(t_{i_1}, \dots, t_{i_k}) \thickapprox t_j.
\]
Then we form the algebra $\A$ with universe $\{ t_1, \dots, t_m \}$ and whose basic $k$-ary operations $f$ are defined by the rules given by the above equations, that is, by stipulating that
\[
f(t_{i_1}, \dots, t_{i_k}) = t_j.
\]
As $\A$ is finite and of finite type and $\mathsf{V}$ is finitely axiomatizable, we can find  the least congruence $\theta$ of $\A$ such that $\A / \theta \in \mathsf{V}$ in a finite amount of time.\ Lastly, from the construction of the free algebra $\boldsymbol{F}_\mathsf{V}(n)$ it follows that $\A / \theta \cong \boldsymbol{F}_\mathsf{V}(n)$.
\end{proof}

\section{Pseudocomplemented distributive lattices}

A \emph{pseudocomplemented distributive lattice} is an algebra $\A = \langle A; \land, \lor, \neg, 0, 1 \rangle$ which comprises a bounded distributive lattice $\langle A; \land, \lor, 0, 1 \rangle$ and a unary operation $\neg$ such that for all $a, b \in A$ we have
\[
a \leq \neg b \iff a \wedge b = 0.
\]
This means that $\neg b$ is the largest $a \in A$ such that $a \wedge b = 0$. The element $\neg b$ is called the \emph{pseudocomplement} of $b$. 
Equivalently, pseudocomplemented distributive lattices are the implication-less subreducts of Heyting algebras ${\A = \langle A; \land, \lor, \lnot, \to, 0, 1 \rangle}$ (see, e.g., \cite[Proof of Thm.~2.6]{BP89}). 

We will make use of the following property of pseudocomplements (see, e.g., \cite[Lem.~3.33]{Ber11}).

\begin{Proposition}\label{Prop : triple negation}
Let $a$ be an element of a pseudocomplemented distributive lattice. Then $\neg \neg \neg a = \neg a$.
\end{Proposition}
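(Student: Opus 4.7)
The plan is to prove the two inequalities $\neg a \leq \neg\neg\neg a$ and $\neg\neg\neg a \leq \neg a$ separately, using only the defining biconditional
\[
x \leq \neg y \iff x \wedge y = 0
\]
together with monotonicity of meets.

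First I would establish the auxiliary inequality $b \leq \neg\neg b$ for any element $b$. By the definition of pseudocomplement applied to $b$, we have $\neg b \wedge b = 0$, and hence (applying the biconditional with $x \coloneqq b$ and $y \coloneqq \neg b$) we get $b \leq \neg\neg b$. Specializing this auxiliary fact to $b \coloneqq \neg a$ immediately yields the first desired inequality $\neg a \leq \neg\neg\neg a$.

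For the reverse inequality, the strategy is to verify $\neg\neg\neg a \wedge a = 0$ and then invoke the biconditional once more to conclude $\neg\neg\neg a \leq \neg a$. To see that the meet is zero, note that by the definition of $\neg\neg\neg a$ (as the pseudocomplement of $\neg\neg a$) we have $\neg\neg\neg a \wedge \neg\neg a = 0$. Combining this with the auxiliary inequality $a \leq \neg\neg a$ applied above, monotonicity gives
\[
\neg\neg\neg a \wedge a \leq \neg\neg\neg a \wedge \neg\neg a = 0,
\]
so $\neg\neg\neg a \wedge a = 0$, whence $\neg\neg\neg a \leq \neg a$ by the defining biconditional.

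There is no substantive obstacle here: the entire argument is a two-line manipulation of the Galois-style biconditional defining pseudocomplements, and the only ``trick'' is noticing that one should prove $b \leq \neg\neg b$ first and then instantiate it at $b = \neg a$ to bound $\neg\neg\neg a \wedge a$ from above by $\neg\neg\neg a \wedge \neg\neg a$.
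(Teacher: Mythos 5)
Your proof is correct: both inequalities follow exactly as you describe from the defining biconditional, the auxiliary fact $b \leq \neg\neg b$, and monotonicity of meet. The paper does not give its own proof of this proposition (it simply cites \cite[Lem.~3.33]{Ber11}), and your argument is the standard one found there, so there is nothing further to compare.
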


It is well known that the class of pseudocomplemented distributive lattices forms a finitely axiomatizable variety (see, e.g., \cite[Thm.~VIII.3.1]{BD74}). Furthermore, the following holds (see, e.g., \cite[Thm.~4.55]{Ber11}).

\begin{Theorem}\label{Thm : PDL locally finite}
The variety of pseudocomplemented distributive lattices is locally finite.
\end{Theorem}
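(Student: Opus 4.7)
The plan is to show that, given any pseudocomplemented distributive lattice $\A$ and a finite $X = \{x_1, \dots, x_n\} \subseteq A$, the subalgebra $\mathsf{Sg}^\A(X)$ is contained in the bounded sublattice $D$ of $\A$ generated by $X$ together with a finite Boolean subalgebra $B$ of the skeleton of $\A$. Since $D$ will be a quotient of the (finite) free bounded distributive lattice on $|X \cup B|$ generators, it is finite, and hence so is $\mathsf{Sg}^\A(X)$.

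First, recall that the set $B(\A) \coloneqq \{\neg a : a \in A\}$ carries a Boolean algebra structure with meet inherited from $\A$, complement given by the restriction of $\neg$ (an involution on $B(\A)$ by Proposition~\ref{Prop : triple negation}), and join $a \sqcup b \coloneqq \neg(\neg a \wedge \neg b)$. Let $B$ be the Boolean subalgebra of $B(\A)$ generated by $\{\neg x_1, \dots, \neg x_n\}$. The free Boolean algebra on $n$ generators has $2^{2^n}$ elements, so $|B| \leq 2^{2^n}$, and because $\wedge$, $\sqcup$, and $\neg$ are all PDL-definable we have $B \sub \mathsf{Sg}^\A(X)$.

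The crucial algebraic step is the De Morgan--type identity
\[
\neg(a \wedge b) = \neg(\neg\neg a \wedge \neg\neg b),
\]
valid in every pseudocomplemented distributive lattice. One direction is immediate from $a \leq \neg\neg a$, $b \leq \neg\neg b$, and the order-reversal of $\neg$. For the converse, the defining property $\neg(a \wedge b) \wedge a \wedge b = 0$ yields $\neg(a \wedge b) \wedge a \leq \neg b$ and hence $\neg(a \wedge b) \wedge a \wedge \neg\neg b = 0$, and one more application of the adjunction $c \leq \neg d \iff c \wedge d = 0$ gives $\neg(a \wedge b) \wedge \neg\neg a \wedge \neg\neg b = 0$. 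For $a, b \in B(\A)$ this specialises to the Boolean De Morgan law $\neg(a \wedge b) = \neg a \sqcup \neg b$.

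Finally, let $D$ be the bounded sublattice of $\A$ generated by $X \cup B$; it is finite as noted above. To see that $D$ is closed under $\neg$, write any $a \in D$ in disjunctive normal form $a = \bigvee_i \bigwedge_j y_{ij}$ with $y_{ij} \in X \cup B$. Using $\neg(p \vee q) = \neg p \wedge \neg q$ (immediate from the adjunction) together with iterated applications of the identity above, we obtain $\neg a = \bigwedge_i \bigsqcup_j \neg y_{ij}$; each $\neg y_{ij}$ lies in $B$ (trivially if $y_{ij} \in B$, by the definition of $B$ if $y_{ij} \in X$), and $B$ is closed under $\wedge$ and $\sqcup$, so $\neg a \in B \sub D$. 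Thus $D$ is a PDL-subalgebra of $\A$ containing $X$, whence $\mathsf{Sg}^\A(X) \sub D$ is finite. The main obstacle is the De Morgan identity in the third paragraph: without it, there is no way to propagate $\neg$ through an arbitrary meet and land back inside the chosen finite Boolean sub-skeleton $B$; once that identity is secured, the rest is routine bookkeeping with normal forms and the classical finiteness of the free Boolean algebra and the free bounded distributive lattice on finitely many generators.
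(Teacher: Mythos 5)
Your proof is correct. Note, however, that the paper does not prove this theorem at all: it is quoted as a known result with a citation to \cite[Thm.~4.55]{Ber11}, so there is no in-paper argument to compare against; what you give is essentially the standard proof of local finiteness for p-algebras. Your two key ingredients are both established correctly --- that the skeleton $\{\neg a : a \in A\}$ is a Boolean algebra under $\wedge$, $\neg$, and $a \sqcup b \coloneqq \neg(\neg a \wedge \neg b)$, and the identity $\neg(a \wedge b) = \neg(\neg\neg a \wedge \neg\neg b)$ --- and together they do exactly what is needed: every pseudocomplement of an element of the bounded sublattice $D$ generated by $X \cup B$ lands back inside the finite Boolean subalgebra $B$ generated by $\neg x_1, \dots, \neg x_n$, so $D$ is a finite subalgebra of $\A$ containing $\mathsf{Sg}^{\A}(X)$. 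The only (harmless) omission is the degenerate normal forms $0$ and $1$ of the bounded sublattice, whose pseudocomplements are $1$ and $0$ and hence also lie in $D$.
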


Notably, every finite distributive lattice $\langle A; \land, \lor, 0, 1 \rangle$ can be viewed as a pseudocomplemented one by setting 
\[
\lnot a \coloneqq \max \{ b \in A : a \land b = 0 \} \, \, \text{ for each }a\in A.
\]
The same is true for the lattice of open sets of every topological space by stipulating that if $U$ is an open set, then $\lnot U$ is the topological interior of $U^c$.

In view of 	\emph{Priestley duality} \cite{Pri70,Pri72}, bounded distributive lattices can be studied through the lenses of duality theory, as we proceed to recall. Given a poset $\langle X; \leq \rangle$ and $Y \subseteq X$, let
\begin{align*}
\up Y &\coloneqq \{x\in X : \text{there exists } y \in Y \text{ such that } y\leq x\};\\
\down Y &\coloneqq \{x\in X : \text{there exists } y \in Y \text{ such that } x\leq y\}.
\end{align*}
The set $Y$ is said to be an {\em upset} (resp.\ {\em downset}) if $Y={\uparrow}Y$ (resp.\ $Y={\downarrow}Y$). When $Y = \{ x \}$, we will write ${\uparrow} x$ and ${\downarrow} x$ instead of ${\uparrow} \{ x \}$ and ${\downarrow} \{ x\}$.
For all $x, y \in X$ with $x \leq y$, we let $[x,y] \coloneqq \{ z \in X : x \leq z \leq y\}$.

An \emph{ordered topological space} is a triple $\langle X, \leq, \tau \rangle$ comprising a poset $\langle X, \leq \rangle$ and a topology $\tau$ on $X$. Given an ordered topological space $X$, we denote the set of its clopen upsets by $\ClopUp(X)$. An ordered topological space $X$ is said to be a \emph{Priestley space} when it is compact and satisfies the \emph{Priestley separation axiom}, namely, the demand that for all $x, y \in X$,
\[
\text{if }x \nleq y \text{, there exists }U \in \ClopUp(X) \text{ such that }x \in U \text{ and }y \notin U.
\]
A map $p \colon X \to Y$ between Priestley spaces is said to be a \emph{Priestley morphism} when it is continuous and order preserving. We denote the category of Priestley spaces with Priestley morphisms between them by $\Pries$. Similarly, we denote the category of bounded distributive lattices with homomorphisms between them by $\DL$.

Priestley duality establishes a dual equivalence between $\DL$ and $\Pries$. More precisely, let $\A$ be a bounded distributive lattice. A set $F \subseteq A$ is a \emph{prime filter} of $\A$ when it is a nonempty proper upset such that for all $a, b \in A$,
\[
(a, b \in F \Longrightarrow a \land b \in F) \qquad \text{and} \qquad (a \lor b \in F \Longrightarrow a \in F \text{ or }b \in F).
\]
We denote the set of prime filters of $\A$ by $\mathsf{Pr}(\A)$. Now, for each $a \in A$ let
\[
\gamma_\A(a) \coloneqq \{ F \in \mathsf{Pr}(A) : a \in F \}.
\]
Then the triple $\A_* \coloneqq \langle \mathsf{Pr}(\A), \subseteq, \tau \rangle$, where  $\tau$ is the topology on $\mathsf{Pr}(\A)$ generated by the subbasis
\[
\{ \gamma_\A(a) : a \in A \} \cup \{ \gamma_\A(a)^c : a \in A \},
\]
is a Priestley space. Furthermore, given a homomorphism $h \colon \A \to \B$ between bounded distributive lattices, the map $h_\ast \colon \B_* \to \A_*$ defined by the rule $h_*(F) \coloneqq h^{-1}[F]$ is a Priestley morphism. Notably, the transformation $(-)_* \colon \DL \to \Pries$ can be viewed as a contravariant functor.

On the other hand, given a Priestley space $X$, the structure $X^* \coloneqq \langle \ClopUp(X); \cap, \cup, \varnothing, X \rangle$ is a bounded distributive lattice. Moreover, given a Priestley morphism $p \colon X \to Y$, the map $p^\ast \colon Y^* \to X^*$ defined by the rule $p^*(U) \coloneqq p^{-1}[U]$ is a homomorphism of bounded distributive lattices. Lastly, the transformation $(-)^* \colon \Pries \to \DL$ can also be viewed as a contravariant functor.

\begin{Theorem}[Priestley duality]
The functors $(-)_*$ and $(-)^*$ witness a dual equivalence between the categories $\DL$ and $\Pries$.
\end{Theorem}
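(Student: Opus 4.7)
The plan is to construct natural transformations $\gamma \colon \mathrm{Id}_{\DL} \to (-)^* \circ (-)_*$ and $\varepsilon \colon \mathrm{Id}_{\Pries} \to (-)_* \circ (-)^*$ whose components are isomorphisms in the respective categories. Once these are in place, naturality is a bookkeeping exercise using the definitions of $h_*$ and $p^*$: for a homomorphism $h \colon \A \to \B$ one checks $\gamma_\B \circ h = (h_*)^* \circ \gamma_\A$ directly from the fact that $a \in h_*(F)$ means $h(a) \in F$; for a Priestley morphism $p \colon X \to Y$ the equation $\varepsilon_Y \circ p = (p^*)_* \circ \varepsilon_X$ is analogous.

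For each bounded distributive lattice $\A$, I would use the map $\gamma_\A$ already defined, sending $a$ to $\{F \in \mathsf{Pr}(\A) : a \in F\}$. Each $\gamma_\A(a)$ is by choice of subbasis a clopen upset of $\A_*$. That $\gamma_\A$ preserves the lattice operations and bounds reduces to the defining properties of prime filters ($\gamma_\A(a \lor b) = \gamma_\A(a) \cup \gamma_\A(b)$ uses that $a \lor b \in F$ forces $a \in F$ or $b \in F$; the other clauses are immediate). Injectivity follows from the prime filter separation theorem: if $a \nleq b$, Zorn's lemma yields a prime filter containing $a$ but not $b$. For surjectivity, given $V \in \ClopUp(\A_*)$, compactness of $\A_*$ lets me cover $V$ by finitely many basic clopens of the form $\gamma_\A(a) \cap \gamma_\A(b)^c$; the fact that $V$ is an upset forces the $\gamma_\A(b)^c$ factors to be redundant (any point in the cover must be together with everything above it), so $V$ is a finite union of $\gamma_\A(a_i)$'s, and hence $V = \gamma_\A(a_1 \lor \cdots \lor a_n)$.

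For each Priestley space $X$, I would define $\varepsilon_X \colon X \to (X^*)_*$ by
\[
\varepsilon_X(x) \coloneqq \{U \in \ClopUp(X) : x \in U\}.
\]
It is straightforward that $\varepsilon_X(x)$ is a prime filter of $X^*$, and that $\varepsilon_X$ is order-preserving; continuity follows because $\varepsilon_X^{-1}[\gamma_{X^*}(U)] = U$ and $\varepsilon_X^{-1}[\gamma_{X^*}(U)^c] = X \setminus U$, both clopen. Injectivity of $\varepsilon_X$ together with the fact that $x \leq y$ whenever $\varepsilon_X(x) \subseteq \varepsilon_X(y)$ are precisely two instances of the Priestley separation axiom. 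For surjectivity, given a prime filter $F \in (X^*)_*$, I would consider the family
\[
\mathcal{F} \coloneqq \{U : U \in F\} \cup \{X \setminus V : V \in \ClopUp(X) \text{ and } V \notin F\}
\]
of closed subsets of $X$. The primality of $F$ ensures that $\mathcal{F}$ has the finite intersection property, so compactness of $X$ produces some $x \in \bigcap \mathcal{F}$; by construction $\varepsilon_X(x) = F$. Finally, a continuous order-preserving bijection between Priestley spaces that reflects the order is automatically a Priestley isomorphism because the domain is compact and the codomain Hausdorff.

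I expect the two main technical obstacles to be precisely the surjectivity arguments: the reduction of an arbitrary clopen upset to a single $\gamma_\A(a)$ (which crucially combines compactness with the upset condition) and the finite-intersection-property argument used to invert $\varepsilon_X$. Everything else amounts to routine verifications with prime filters, clopen upsets, and the functors $(-)_*$ and $(-)^*$.
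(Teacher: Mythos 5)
The paper does not actually prove this statement: it is the classical Priestley duality theorem, recorded with a citation to Priestley's original papers, so there is no in-paper argument to compare yours against. Your outline is the standard textbook proof (unit $\gamma$, counit $\varepsilon$, compactness arguments for the two surjectivity claims, naturality by unwinding the definitions of $h_*$ and $p^*$), and most of it is sound: the finite-intersection-property argument inverting $\varepsilon_X$ is correct, the injectivity and order-reflection claims are indeed instances of the separation axiom and of the prime filter theorem, and a continuous, order-reflecting bijection from a compact space to a Hausdorff one is automatically an isomorphism of Priestley spaces.

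There is, however, a genuine gap in your surjectivity argument for $\gamma_\A$. Writing a clopen upset $V$ as a finite union of basic clopens $\gamma_\A(a_i) \cap \gamma_\A(b_i)^c$ and then declaring the negative factors ``redundant'' because $V$ is an upset does not work: the individual pieces need not be upsets even when their union is, and $\bigcup_i \gamma_\A(a_i)$ can strictly contain $V$. Concretely, let $\A$ be the four-element lattice $\{0,a,b,1\}$ with $a$ and $b$ incomparable; its dual space is the two-element antichain $\{F_a, F_b\}$ of the prime filters generated by $a$ and by $b$, and the clopen upset $V = \{F_a\}$ can be written as $\gamma_\A(1) \cap \gamma_\A(b)^c$, yet $\gamma_\A(1)$ is the whole space. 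The fact you need is true but requires re-choosing the positive witnesses via a two-step compactness argument: for $F \in V$ and $G \notin V$ one has $F \not\subseteq G$ (since $V$ is an upset), so there is $a_{F,G} \in F \setminus G$; compactness of the closed set $V^c$ produces a finite meet $b_F$ with $F \in \gamma_\A(b_F) \subseteq V$, and compactness of $V$ then produces finitely many such elements with $V = \gamma_\A(b_{F_1} \lor \dots \lor b_{F_m})$. With this substituted for your redundancy claim, the rest of your sketch goes through.
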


As Priestley spaces are Hausdorff, the topology of each finite Priestley space is discrete. Therefore, the full subcategory of $\Pries$ consisting of finite Priestley spaces is isomorphic to the category of finite posets with order preserving maps between them. Together with the fact that Priestley duality preserves the property of being finite, we obtain the following (see, e.g., \cite[Thm.\ 1.25]{GvG24}).

\begin{Theorem}[Finite Priestley duality]\label{Thm : finite Priestley duality}
The category of finite bounded distributive lattices with homomorphisms between them is dually equivalent to that of finite posets with order preserving maps between them.
\end{Theorem}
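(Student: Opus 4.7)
The plan is to obtain the finite duality as a restriction of the full Priestley duality stated above. The core observation is that, on the side of spaces, the topology becomes redundant in the finite case, so finite Priestley spaces and finite posets are essentially the same thing; on the side of algebras, the property of being finite is preserved under Priestley duality in both directions.

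First, I would verify that Priestley duality restricts to an equivalence between the full subcategories of finite objects. In one direction, if $\A$ is a finite bounded distributive lattice, then $\mathsf{Pr}(\A) \sub \powerset{A}$ is finite, and so $\A_*$ is a finite Priestley space. In the other direction, if $X$ is a finite Priestley space, then $\ClopUp(X) \sub \powerset{X}$ is finite, and so $X^*$ is a finite bounded distributive lattice. Combined with the natural isomorphisms $\A \cong (\A_*)^*$ and $X \cong (X^*)_*$ furnished by the full duality, this shows that $(-)_*$ and $(-)^*$ restrict to a dual equivalence between the full subcategory $\Pries_{\mathsf{fin}}$ of $\Pries$ consisting of finite Priestley spaces and the full subcategory $\DL_{\mathsf{fin}}$ of $\DL$ consisting of finite bounded distributive lattices.

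Next, I would argue that $\Pries_{\mathsf{fin}}$ is isomorphic to the category $\mathsf{Pos}_{\mathsf{fin}}$ of finite posets with order preserving maps. Since every Priestley space is compact Hausdorff, any finite Priestley space $X$ carries the discrete topology: the singletons $\{x\}$ are closed by Hausdorffness, and a finite union of closed sets is closed. Consequently, every subset of $X$ is open, so every map from $X$ to any topological space is continuous. It follows that a map $p \colon X \to Y$ between finite Priestley spaces is a Priestley morphism if and only if it is order preserving. Therefore, the forgetful functor $\Pries_{\mathsf{fin}} \to \mathsf{Pos}_{\mathsf{fin}}$ that discards the (discrete) topology is a well-defined functor that is bijective on objects up to the obvious identification (each finite poset admits a unique Priestley space structure, obtained by equipping it with the discrete topology, and one readily checks that the Priestley separation axiom holds trivially in a finite poset with the discrete topology) and is the identity on morphism sets. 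Thus it is an isomorphism of categories.

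Composing the dual equivalence between $\DL_{\mathsf{fin}}$ and $\Pries_{\mathsf{fin}}$ with this isomorphism yields the desired dual equivalence between $\DL_{\mathsf{fin}}$ and $\mathsf{Pos}_{\mathsf{fin}}$. There is no real obstacle here, since everything follows from unpacking definitions; the only mild subtlety is checking that the Priestley separation axiom is automatic for finite posets equipped with the discrete topology, which is immediate because every upset is then a clopen upset.
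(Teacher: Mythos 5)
Your proposal is correct and follows essentially the same route as the paper, which derives the finite duality by observing that finite Priestley spaces are discrete (hence coincide with finite posets, with continuity of maps being automatic) and that Priestley duality preserves finiteness in both directions. Your write-up simply supplies the routine verifications that the paper leaves implicit, such as the separation axiom holding for finite discrete posets.
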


The dual $\A_\ast$ of a finite distributive lattice $\A$ can be identified with the order dual of a subposet of $\A$, as we proceed to recall. Let $\At(\A)$ and $\Jirr(\A)$ be the sets of atoms and of join-irreducibles of $\A$, respectively. Recall that the join-irreducibles are assumed to be different from $0$. We view $\Jirr(\A)$ as a poset with the order induced by the one of $\A$.
Then consider the map $\up ( - ) \colon \Jirr(\A) \to \A_*$ sending $a$ to $\up a$ and the map $\min \colon \A_* \to \Jirr(\A)$ sending $F \in \A_*$ to its least element $\min F$. Moreover, given a poset $X$ and $Y \subseteq X$, we denote the set of maximal elements of the subposet of $X$ with universe $Y$ by $\max Y$. In view of the next observation (see, e.g., \cite[p.\ 52]{GvG24}), $\A_\ast$ can be identified with $\Jirr(\A)$ and $\max \A_\ast$ with $\At(\A)$.

\begin{Proposition}\label{Prop : correspondence prime filters join irr}
The maps $\up ( - )$ and $\min$ are well-defined dual isomorphisms inverse of one another. Moreover, they restrict to bijections between $\At(\A)$ and $\max \A_*$.
\end{Proposition}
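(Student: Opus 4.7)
The strategy is to verify each of the four claims bundled into the statement in sequence: (a) $\up(-)$ takes values in $\A_\ast$; (b) $\min$ takes values in $\Jirr(\A)$ and is well-defined; (c) the two maps are mutually inverse; and (d) they reverse the order and restrict correctly to atoms versus maximal prime filters. Since $\A$ is finite, everything is finitary, so nothing but the distributive law and a direct unfolding of definitions is needed; I anticipate no serious obstacle.

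For (a), fix $a \in \Jirr(\A)$. Then $\up a$ is clearly an upset. It is nonempty since $1 \in \up a$, proper since $a \neq 0$ implies $0 \notin \up a$, and closed under binary meets because if $a \leq b$ and $a \leq c$ then $a \leq b \land c$. For primality, suppose $b \lor c \in \up a$. Then by distributivity
\[
a = a \land (b \lor c) = (a \land b) \lor (a \land c),
\]
and join-irreducibility of $a$ forces $a = a \land b$ or $a = a \land c$, that is, $b \in \up a$ or $c \in \up a$. For (b), let $F \in \A_\ast$. Since $\A$ is finite, $F$ is a finite nonempty filter, so $\bigwedge F \in F$ and this element is the least element $\min F$. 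It is nonzero because $F$ is proper. To see it is join-irreducible, suppose $\min F = b \lor c$. Then $b \lor c \in F$, so primality gives $b \in F$ or $c \in F$; in the first case $\min F \leq b \leq \min F$, hence $b = \min F$, and analogously in the second.

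For (c), note that $\min(\up a) = a$ is immediate since $a$ is manifestly the least element of $\up a$. Conversely, for $F \in \A_\ast$ the inclusion $\up(\min F) \subseteq F$ holds because $F$ is an upset containing $\min F$, while the reverse inclusion holds because every $x \in F$ satisfies $\min F \leq x$. This also shows both maps are order reversing: for $a, b \in \Jirr(\A)$ we have $a \leq b$ iff $\up b \subseteq \up a$, and composing with the inverse $\min$ yields the dual statement for $\A_\ast$. Thus $\up(-)$ and $\min$ are mutually inverse dual isomorphisms between $\Jirr(\A)$ and $\A_\ast$.

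For (d), recall that an atom of $\A$ is precisely a minimal element of $\Jirr(\A)$: every atom is join-irreducible, and conversely any join-irreducible $a$ satisfies $a > 0$, so some atom sits below $a$, forcing minimality in $\Jirr(\A)$ to coincide with being an atom. Since order-reversing bijections swap minima with maxima, the restriction of $\up(-)$ to $\At(\A)$ is a bijection onto $\max \A_\ast$, with inverse the restriction of $\min$.
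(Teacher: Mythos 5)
Your proof is correct: each of the four verifications (that $\up a$ is a prime filter via distributivity and join-irreducibility, that $\min F$ is a nonzero join-irreducible via primality, the mutual inversion, and the identification of atoms with minimal join-irreducibles so that the dual isomorphism carries them to $\max \A_*$) is the standard argument and is carried out without gaps. The paper itself offers no proof of this proposition, citing it to the literature (Gehrke--van Gool), so there is no in-paper argument to compare against; your write-up supplies exactly the expected folklore proof.
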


Lastly, the following is an immediate consequence of the finiteness of $\A$.

\begin{Proposition}\label{Prop : atoms underneath}
Let $\A$ be a finite distributive lattice. For every $a \in A$ distinct from the minimum of $\A$ there exists $b \in \At(\A)$ such that $b \leq a$.
\end{Proposition}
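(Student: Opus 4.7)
The plan is to exploit finiteness directly by picking a minimal nonzero element below $a$ and showing it must be an atom.

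First I would consider the set
\[
S \coloneqq \{c \in A : 0 < c \leq a\}.
\]
This set is nonempty because $a$ itself belongs to $S$, using the hypothesis that $a$ is distinct from the minimum $0$ of $\A$. Since $\A$ is finite, $S$ has at least one minimal element with respect to the order inherited from $\A$; call one such element $b$.

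Next I would verify that $b \in \At(\A)$. By definition of $S$ we have $0 < b$, so it remains to show there is no element strictly between $0$ and $b$. Suppose for contradiction that $d \in A$ satisfies $0 < d < b$. Then by transitivity $d \leq b \leq a$, whence $d \in S$, contradicting the minimality of $b$ in $S$. Therefore $b$ is an atom of $\A$ and, by construction, $b \leq a$, which completes the argument.

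There is no real obstacle here: the statement is a routine consequence of finiteness, and the only thing to be careful about is to ensure the witness $b$ is chosen from the nonzero elements below $a$ (so that $0$ itself is not inadvertently selected) and that the atom condition follows from the minimality in $S$ rather than in $A \setminus \{0\}$.
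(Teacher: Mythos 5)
Your proof is correct: the paper states this proposition without proof as an immediate consequence of finiteness, and your argument (take a minimal element of the nonempty finite set of nonzero elements below $a$ and check it is an atom) is precisely the standard argument being alluded to. Nothing is missing.
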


Let $\pDL$ be the category of pseudocomplemented distributive lattices with homomorphisms between them. As $\pDL$ is a subcategory of $\DL$, it is clear that Priestley duality restricts to a duality between $\pDL$ and a suitable subcategory of $\Pries$.

\begin{Definition}
\
\benroman
\item A Priestley space  $X$ is said to be a \emph{p-space} when $\down U$ is clopen for every $U \in \ClopUp(X)$. 
\item A map $p \colon X \to Y$ between posets is said to be a \emph{weak p-morphism} when it  is  order preserving  and for all $x \in X$ and $y \in \max Y$,
\[
\text{if }p(x) \leq y\text{, there exists }z \in \max \up x\text{ such that }p(z)=y.
\]
\eroman
\end{Definition}

The category of p-spaces with continuous weak p-morphisms between them will be denoted by $\pSP$.

\begin{Theorem}\label{Thm : duality for PDL}\cite{Pri75}
Priestley duality 
 yields
a dual equivalence between $\pDL$ and $\pSP$.
\end{Theorem}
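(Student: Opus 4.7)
My plan is to show that Priestley duality restricts to the indicated subcategories by verifying two separate correspondences. First, a bounded distributive lattice $\A$ is pseudocomplemented if and only if $\A_*$ is a p-space. Second, a $\DL$-homomorphism $h \colon \A \to \B$ between pseudocomplemented distributive lattices preserves $\lnot$ if and only if $h_* \colon \B_* \to \A_*$ is a weak p-morphism (continuity being automatic from the general duality). Combined with the restriction of the functors $(-)_*$ and $(-)^*$ to these subcategories, this will yield the desired dual equivalence.

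The object-level correspondence is driven by the following computation: for a Priestley space $X$ and $U \in \ClopUp(X)$, the pseudocomplement of $U$ in $X^*$, when it exists, must coincide with $X \setminus \down U$. One inclusion is immediate, since any clopen upset $V$ disjoint from $U$ satisfies $V \subseteq X \setminus \down U$ (otherwise some $v \in V$ would have $v \leq u \in U$, forcing $u \in V \cap U$). For the reverse, given $x \in X \setminus \down U$, the Priestley separation axiom yields for each $y \in U$ a clopen upset $W_y$ with $x \in W_y$ and $y \notin W_y$; compactness of the closed set $U$ then selects a finite subfamily whose intersection is a clopen upset containing $x$ and disjoint from $U$. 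Thus $\lnot U$ exists in $X^*$ if and only if $X \setminus \down U \in \ClopUp(X)$, i.e., if and only if $\down U$ is clopen. Quantifying over $U \in \ClopUp(X)$ gives the object-level claim.

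For the morphism-level correspondence, writing $p \coloneqq h_*$ and transferring the equation $h(\lnot a) = \lnot h(a)$ through the Priestley embedding $\gamma$ using the formula established above, the preservation of $\lnot$ becomes
\[
p^{-1}(\down U) = \down p^{-1}(U) \quad \text{for every } U \in \ClopUp(\A_*).
\]
The inclusion $\down p^{-1}(U) \subseteq p^{-1}(\down U)$ follows from $p$ being order preserving. Assuming $p$ is a weak p-morphism, the reverse inclusion is established by taking $y \in U$ with $p(x) \leq y$, extending $y$ to a maximal element $y^* \in \max \A_*$ via Zorn and the compactness of Priestley spaces (noting that $y^* \in U$ by upclosedness), and invoking the weak p-morphism property to produce $z \in \max \up x$ with $p(z) = y^* \in U$. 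Conversely, for $x \in \B_*$ and $y \in \max \A_*$ with $p(x) \leq y$, the family $\{\up x \cap p^{-1}(U) : y \in U \in \ClopUp(\A_*)\}$ consists of closed subsets of the compact space $\B_*$ and has the finite intersection property by the dual condition; its intersection equals $\up x \cap p^{-1}(\up y) = \up x \cap p^{-1}(y)$, thanks to the Priestley-separation identity $\bigcap \{U \in \ClopUp(\A_*) : y \in U\} = \up y$ and the maximality of $y$. A further Zorn/compactness step lifts the resulting witness $z_0 \geq x$ to a maximal element $z \in \max \up x$, while order-preservation combined with the maximality of $y$ forces $p(z) = y$.

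The main obstacle lies in the final lifting step: the natural ``point above $x$'' extracted from the dual equation need not itself belong to $\max \up x$, so one has to orchestrate compactness (to secure the first witness) together with the maximality of $y$ (to ensure that order-preservation keeps lifted witnesses inside the fiber $p^{-1}(y)$) to match the precise shape of the weak p-morphism definition. The remaining pieces are a careful but standard combination of Priestley separation, compactness, and Zorn's lemma applied to Priestley spaces.
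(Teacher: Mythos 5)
The paper offers no proof of this theorem: it is imported verbatim from Priestley's 1975 article via the citation \cite{Pri75}, so there is no in-paper argument to compare yours against. Judged on its own, your reconstruction is correct and is essentially the standard proof of Priestley's result. The object-level computation $\lnot U = X \setminus {\down}U$ (via Priestley separation plus compactness of the clopen set $U$) correctly characterizes when pseudocomplements exist in $X^*$, and the translation of $h(\lnot a) = \lnot h(a)$ into $p^{-1}({\down}U) = {\down}p^{-1}(U)$ through the naturality of $\gamma$ is right. The delicate direction — deriving the weak p-morphism condition from the dual equation — is handled correctly: the family $\{\up x \cap p^{-1}(U) : y \in U \in \ClopUp(\A_*)\}$ has the finite intersection property precisely because of the dual equation, its intersection collapses to $\up x \cap p^{-1}(y)$ by Priestley separation and the maximality of $y$, and the final lift of the witness $z_0$ into $\max \up z_0 \subseteq \max \up x$ stays in the fiber over $y$ because $y$ is maximal. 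The only points left implicit are routine: that the unit and counit isomorphisms of the ambient duality restrict (a bijective bounded-lattice homomorphism automatically preserves $\lnot$, and poset isomorphisms are weak p-morphisms), and that maximal points above a given point exist in a Priestley space (Zorn plus closedness of $\up c$ and compactness). Neither is a gap worth objecting to in a proof at this level of detail.
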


Theorems \ref{Thm : finite Priestley duality} and \ref{Thm : duality for PDL} yield the following finite duality.

\begin{Corollary}\label{Cor : finite PDL duality}
The category of finite pseudocomplemented distributive lattices  with homomorphisms between them is dually equivalent to that of finite posets with weak p-morphisms between them.
\end{Corollary}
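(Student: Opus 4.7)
The plan is to obtain this as an immediate consequence of Theorem \ref{Thm : duality for PDL}, by restricting to finite objects and showing that the topological side conditions defining p-spaces and continuous weak p-morphisms become vacuous in the finite case. This is exactly parallel to how Theorem \ref{Thm : finite Priestley duality} is obtained from general Priestley duality.

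First, I would note that Priestley duality preserves finiteness, so that the dual equivalence of Theorem \ref{Thm : duality for PDL} restricts to a dual equivalence between finite pseudocomplemented distributive lattices and finite p-spaces with continuous weak p-morphisms. Since Priestley spaces are Hausdorff, every finite Priestley space carries the discrete topology, so every subset is clopen. In particular, the condition that $\down U$ be clopen for each $U \in \ClopUp(X)$ is satisfied trivially, and finite p-spaces are exactly finite posets equipped with the discrete topology.

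For morphisms, every map between finite discrete spaces is continuous, so a continuous weak p-morphism between finite p-spaces is nothing more than a weak p-morphism between the underlying finite posets, in the sense of the Definition preceding Theorem \ref{Thm : duality for PDL}. Combining these observations with the restriction of Theorem \ref{Thm : duality for PDL} to finite objects gives the claimed dual equivalence. No substantive obstacle arises: the corollary is a routine repackaging of Theorem \ref{Thm : duality for PDL} in the finite setting, and the only thing to check is that the topological conditions (being clopen, being continuous) trivialize under the discrete topology, which I have outlined above.
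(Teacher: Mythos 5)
Your proposal is correct and matches the paper's approach: the paper simply derives the corollary from Theorems \ref{Thm : finite Priestley duality} and \ref{Thm : duality for PDL}, and the substance of that derivation is exactly your observation that finite Priestley spaces are discrete, so the p-space clopenness condition and the continuity requirement on weak p-morphisms both trivialize. Nothing is missing.
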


As we observed above, every finite distributive lattice can be seen as a pseudocomplemented distributive lattice. However, a homomorphism of bounded lattices between finite pseudocomplemented distributive lattices does not necessarily preserve pseudocomplements. So, the main difference between the categories of finite distributive lattices and of finite pseudocomplemented distributive lattices lies in their morphisms. This is reflected in the fact that an order preserving map between finite posets is not necessarily a weak p-morphism.

We will rely on the following observation.

\begin{Proposition}\label{Prop : preservation of maximal}
Let $p \colon X \to Y$ be a weak p-morphism between finite posets. Then 
\benroman
\item\label{item : preservation of maximal : 1} $p[\max \up x]=\max \up p(x)$ for every $x \in X$;
\item\label{item : preservation of maximal : 2} $p[\max X] \subseteq \max Y$.
\eroman
\end{Proposition}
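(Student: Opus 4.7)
The plan is to prove (i) first by establishing the two inclusions separately, and then to deduce (ii) quickly by applying (i) to a maximal element.

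First I would tackle the inclusion $p[\max \up x] \subseteq \max \up p(x)$. Given $z \in \max \up x$, order preservation immediately gives $p(z) \in \up p(x)$, so only maximality needs argument. I would take an arbitrary $w \in \up p(x)$ with $p(z) \leq w$ and, using finiteness of $Y$, pick $y \in \max Y$ with $w \leq y$. The crucial step is to apply the weak p-morphism condition \emph{at $z$ itself} (rather than at $x$): since $p(z) \leq y$ with $y \in \max Y$, this produces some $z'' \in \max \up z$ with $p(z'') = y$. But $x \leq z \leq z''$ puts $z''$ in $\up x$, so the maximality of $z$ in $\up x$ forces $z'' = z$. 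Hence $p(z) = y \geq w \geq p(z)$, which gives $p(z) = w$, as required.

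Next I would prove the reverse inclusion $\max \up p(x) \subseteq p[\max \up x]$. Given $y \in \max \up p(x)$, I would first observe that $y$ is in fact maximal in the whole of $Y$: any $y' \in Y$ with $y \leq y'$ satisfies $p(x) \leq y'$, so $y' \in \up p(x)$, and the maximality of $y$ in $\up p(x)$ forces $y = y'$. Once $y \in \max Y$ is secured, the weak p-morphism condition applied directly to $x$ and $y$ yields $z \in \max \up x$ with $p(z) = y$, completing the inclusion.

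For (ii), I would just note that if $x \in \max X$ then $\up x = \{x\}$, hence $\max \up x = \{x\}$, so (i) gives $\max \up p(x) = \{p(x)\}$. Finiteness of $Y$ guarantees that every element of $\up p(x)$ sits below a maximal element of $\up p(x)$, so $\up p(x) = \{p(x)\}$, i.e., $p(x) \in \max Y$. The hard part will be the first inclusion of (i): the naive move of applying the weak p-morphism condition at $x$ and the chosen $y \in \max Y$ only produces \emph{some} element of $\max \up x$ mapping to $y$, not necessarily $z$; one has to invoke the condition at $z$ so that the maximality of $z$ inside $\up x$ can be propagated through $p$.
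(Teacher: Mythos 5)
Your proof is correct and follows essentially the same route as the paper: the key move in both is to apply the weak p-morphism condition at the element of $\max \up x$ itself and then use its maximality in $\up x$ to force the produced witness to coincide with it. The only cosmetic differences are that you verify maximality of $p(z)$ by testing an arbitrary $w \geq p(z)$ instead of directly showing $p(z) \in \max Y$, and that you spell out the (omitted in the paper) observation that a maximal element of an upset is maximal in the whole poset.
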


\begin{proof}
(\ref{item : preservation of maximal : 1}):
To prove the inclusion from left to right, consider $y \in \max \up x$. Since $Y$ is  finite, there exists $z \in \max Y$ such that $p(y) \leq z$. As $p$ is a weak p-morphism, there exists $u \in \max \up y$ such that $p(u) = z$. Together with the assumption that $y$ is a maximal element of $X$, this yields $y = u$ and, therefore, $p(y) = p(u) = z \in \max \up p(y)$. It follows that $p(y) \in \max \up p(x)$ because $x \leq y$ and $p$ is order preserving. Then we prove the inclusion from right to left. Let $y \in \max \up p(x)$. Then $p(x) \leq y$. Since $y \in \max Y$ and $p$ is a weak p-morphism, there exists $z \in \max \up x$ such that $p(z)=y$. Therefore, $y \in p[\max \up x]$.

(\ref{item : preservation of maximal : 2}): Let $x \in \max X$. From (\ref{item : preservation of maximal : 1}) it follows that $\{p(x)\} = p[\max \up x] = \max \up p(x) \subseteq \max Y$. Thus, $p(x) \in \max Y$.
\end{proof}

Under Priestley duality, embeddings in $\DL$ correspond to the morphisms in $\Pries$ that are surjective (see, e.g., \cite[Prop.~11]{Pri72}). As a consequence of Theorem~\ref{Thm : duality for PDL}, a similar dual characterization holds for embeddings in $\pDL$. For our purposes it will be sufficient to consider embeddings between finite pseudocomplemented distributive lattices.
We say that a poset $Y$ is a \emph{weak p-morphic image} of a poset $X$ when there exists a surjective weak p-morphism from $X$ onto $Y$.

\begin{Proposition}\label{Prop : weak p-morphic image}
Let $\A, \B \in \pDL$ be finite. Then $\A$ embeds into $\B$ if and only if $\A_*$ is a weak p-morphic image of $\B_*$.
\end{Proposition}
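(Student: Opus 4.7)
The plan is to derive the proposition directly from the dual equivalence of Theorem \ref{Thm : duality for PDL} (restricted to finite objects via Corollary \ref{Cor : finite PDL duality}), combined with the classical Priestley-duality principle, recalled in the paragraph preceding the statement, that an arrow in $\DL$ is injective precisely when its dual in $\Pries$ is surjective \cite[Prop.~11]{Pri72}. The only conceptual care required is that $\pDL$ sits as a (non-full) subcategory of $\DL$: an embedding in $\pDL$ is in particular an injective homomorphism of bounded distributive lattices, and conversely the dual of a weak p-morphism between finite posets, although a priori only a $\DL$-morphism, actually lies in $\pDL$ by Theorem \ref{Thm : duality for PDL}.

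For the forward implication, suppose $h \colon \A \to \B$ is an embedding in $\pDL$. Then $h$ is in particular an injective homomorphism in $\DL$, so by \cite[Prop.~11]{Pri72} the dual map $h_\ast \colon \B_\ast \to \A_\ast$ is surjective. Theorem \ref{Thm : duality for PDL} ensures that $h_\ast$ is also a weak p-morphism, so $\A_\ast$ is a weak p-morphic image of $\B_\ast$.

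For the converse, assume $p \colon \B_\ast \to \A_\ast$ is a surjective weak p-morphism. Applying the contravariant functor $(-)^\ast$ yields a $\pDL$-homomorphism $p^\ast \colon \A_\ast^\ast \to \B_\ast^\ast$ (again thanks to Theorem \ref{Thm : duality for PDL}), which, composed with the canonical isomorphisms $\A \cong \A_\ast^\ast$ and $\B_\ast^\ast \cong \B$, gives a $\pDL$-homomorphism $h \colon \A \to \B$. Since $p$ is surjective, \cite[Prop.~11]{Pri72} implies that $p^\ast$, and hence $h$, is injective; this is the desired embedding. I do not anticipate any real obstacle beyond the bookkeeping required to move injectivity and surjectivity back and forth across the two dual equivalences.
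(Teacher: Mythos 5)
Your proof is correct and follows essentially the same route as the paper, which states the proposition as an immediate consequence of Theorem \ref{Thm : duality for PDL} together with the fact that embeddings in $\DL$ dualize to surjective Priestley morphisms; the paper offers no further argument beyond the paragraph you reconstruct. Your two added observations — that continuity is automatic for finite posets and that the dual of a weak p-morphism is genuinely a $\pDL$-morphism rather than merely a $\DL$-morphism — are exactly the right points to make explicit.
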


\section{Free extensions}

We rely on the following universal construction (see, e.g., \cite[Ch.~IV]{Mac71}).

\begin{Definition}
A pseudocomplemented distributive lattice $\B$ is \emph{free over a bounded distributive lattice} $\A$ via a bounded lattice homomorphism $e \colon \A \to \B$ when for every $\C \in \pDL$ and bounded lattice homomorphism $f \colon \A \to \C$ there exists a unique $\pDL$-morphism $g \colon \B \to \C$ such that $g \circ e = f$. 
\[
\begin{tikzcd}[sep = large]
\B \arrow[r, dashed, "\exists ! \,g"] & \C \\
\A \arrow[ur, "f"'] \arrow[u, "e"] & 
\end{tikzcd}
\]
\end{Definition}

Notably, the pseudocomplemented distributive lattice free over a distributive lattice always exists  and is unique up to isomorphism (see, e.g., \cite[Cor.~IV.1.1]{Mac71}). Furthermore, it admits a very transparent dual description \cite{DG80}, as we proceed to recall.

Let $X$ be a Priestley space.\ The dual of the pseudocomplemented distributive lattice free over $X^*$ can be described by means of the so-called \emph{Vietoris space} $\V(X)$ of $X$ \cite{Mic51}. As the order-free reduct of the Priestley space $X$ is a Stone space, the Vietoris space $\V(X)$ can be described as follows \cite[Lem.~2.7]{KKV04}: it is the set of all the nonempty closed subsets of $X$ equipped with the topology generated by the subbasis $\{ \Box V, \Diamond V : V \text{ is a clopen subset of } X\}$, where
\begin{align*}
\Box V \coloneqq \{ C \in \V(X) : C \subseteq V \} \qquad \text{and} \qquad \Diamond V \coloneqq \{ C \in \V(X) : C \cap V \ne \varnothing \}.
\end{align*}

\begin{Definition}
Given a Priestley space $X$, we let
\[
\P(X) \coloneqq \{ \langle x, C \rangle  \in X \times \V(X) : C \subseteq \up x \}.
\]
We endow $\P(X)$ with the subspace topology induced by $X \times \V(X)$ and we order it as follows:
\[
\langle x, C \rangle \leq \langle y, D \rangle \iff x \leq y \ \text{and} \ C \supseteq D.
\]
\end{Definition}

\begin{Theorem}\cite[Thm.~2.2]{DG80}\label{Thm : free extensions PDL}
Let $X$ be a Priestley space. Then $\P(X)$ is the dual of the pseudocomplemented distributive lattice free over $X^*$.
\end{Theorem}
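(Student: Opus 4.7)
The plan is to translate the statement into its dual, spatial form via Priestley duality and Theorem~\ref{Thm : duality for PDL}. Explicitly, I will show that $\P(X)$ is a p-space, that the first-coordinate projection $\pi \colon \P(X) \to X$, $\langle x, C \rangle \mapsto x$, is a surjective Priestley morphism, and that the pair $(\P(X), \pi)$ satisfies the following lifting property: for every p-space $Y$ and every Priestley morphism $\phi \colon Y \to X$, there exists a unique continuous weak p-morphism $\psi \colon Y \to \P(X)$ with $\pi \circ \psi = \phi$. Applying the contravariant functor $(-)^*$, and using that surjective Priestley morphisms dualize to $\DL$-embeddings, converts this statement into the defining universal property of $\P(X)^*$ as the pseudocomplemented distributive lattice free over $X^*$ along $\pi^*$.

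First I would show that $\P(X)$ is a Priestley space by realizing it as a closed subspace of $X \times \V(X)$, where $\V(X)$ carries the reverse-inclusion order together with the Vietoris topology; compactness is inherited and Priestley separation follows from the subbase built from $\Box V$, $\Diamond V$ (with $V$ clopen in $X$) and the pullbacks along $\pi$ of clopen upsets of $X$. Next I would verify that $\down U$ is clopen for every clopen upset $U$ of $\P(X)$, which by a distributivity argument reduces to the case where $U$ is a basic clopen upset, and is then a direct Vietoris-theoretic computation. Continuity, order preservation, and surjectivity of $\pi$ are immediate from the definitions.

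The core of the proof is the lifting property. Given a Priestley morphism $\phi \colon Y \to X$ from a p-space $Y$, I would define
\[
\psi(y) \coloneqq \langle \phi(y), \phi[\max \up y]\rangle \quad \text{for every } y \in Y.
\]
One must check that $\max \up y$ is a nonempty closed subset of $Y$ (standard for compact ordered spaces, with nonemptiness furnished by Zorn), that $\phi[\max \up y]$ is consequently a nonempty closed subset of $\up \phi(y)$ (using order preservation of $\phi$ and compactness), and that $\psi$ is continuous and order preserving. The key point is that $\psi$ is a weak p-morphism: here Proposition~\ref{Prop : preservation of maximal} supplies the template and one exploits the explicit description of the maximal elements of $\P(X)$ to match the axiom. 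The identity $\pi \circ \psi = \phi$ is transparent. For uniqueness, if $\psi'(y) = \langle \phi(y), C_y\rangle$ is another such lift, the weak p-morphism property of $\psi'$ combined with the p-space structure of $Y$ forces $C_y = \phi[\max \up y]$ for every $y \in Y$.

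The main obstacle will be the two intertwined calculations: verifying that $\P(X)$ itself satisfies the p-space axiom, which requires a careful Vietoris-theoretic analysis of the interaction between the two coordinates of $\P(X)$; and verifying the weak p-morphism axiom for $\psi$ together with its uniqueness, which hinges on identifying precisely the maximal elements in $\up \psi(y)$ inside $\P(X)$ and matching them with the $\phi$-images of maximal elements in $\up y$.
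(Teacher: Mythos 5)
The paper offers no proof of this statement: it is imported directly from \cite[Thm.~2.2]{DG80}, so there is nothing in-paper to compare against. Your architecture --- dualize the universal property of the free extension, exhibit the projection $\pi \colon \P(X) \to X$, and construct the unique lift $\psi(y) = \langle \phi(y), \phi[\max \up y]\rangle$ of a Priestley morphism $\phi \colon Y \to X$ from a p-space $Y$ --- is the standard route to this result and is sound. In particular, the verification of the weak p-morphism condition and of uniqueness go through exactly as you indicate once one knows that $\max \up \langle x_0, C_0\rangle = \{\langle c, \{c\}\rangle : c \in C_0\}$ and that the maximal elements of $\P(X)$ are precisely the pairs $\langle x, \{x\}\rangle$.

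Two of your justifications, however, would fail as written. First, the closedness of $\max \up y$ is \emph{not} ``standard for compact ordered spaces'': in a general Priestley space $\max Y$ need not be closed (take a convergent sequence of isolated maximal points whose limit lies strictly below an isolated point). What rescues the step is precisely that $Y$ is a p-space: there $\max Y = \bigcap_{U \in \ClopUp(Y)} \bigl( (Y \setminus \down U) \cup U \bigr)$ is closed because each $\down U$ is clopen, and $\max \up y = \up y \cap \max Y$. Second, the reduction of the p-space axiom for $\P(X)$ to subbasic clopen upsets ``by distributivity'' is shaky, since $\down$ does not commute with intersections and a clopen upset need not decompose as a union of subbasic ones. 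The clean argument instead notes that every clopen upset $W$ of $\P(X)$ satisfies $\down W = \down \bigl( W \cap \max \P(X) \bigr)$, that $\max \P(X) = \{\langle x, \{x\}\rangle : x \in X\}$ is a closed homeomorphic copy of $X$, and hence that $\down W = \P(X) \cap (X \times \Diamond S)$ for a clopen $S \subseteq X$, which is clopen. Relatedly, the continuity of $\psi$ --- equivalently, of $y \mapsto \max \up y$ as a map into $\V(Y)$ --- is the genuine technical heart of the lifting step and again consumes the p-space hypothesis on $Y$; it deserves more than the passing mention you give it. None of this undermines the overall plan, but these are exactly the points where the hypotheses are spent, so they cannot be dismissed as routine.
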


\begin{Remark}\label{Rem : finite free extensions}
When the Priestley space $X$ is finite, the definition of $\P(X)$ admits the following simplification. Recall that, in this case, the topology of $X$ is discrete. Consequently, every subset of $X$ is closed and
\[
\P(X) = \{ \langle x, C \rangle  \in X \times \wp(X) : \varnothing \ne C \subseteq \up x \},
\]
 where $\wp(X)$ denotes the powerset of $X$.

Furthermore, in this case, $\P(X)$ is a finite Priestley space, whose topology is also discrete. Therefore, we can identify $\P(X)$ with its underlying finite poset, thus making it amenable to the finite duality of Corollary \ref{Cor : finite PDL duality}.
\qed
\end{Remark}

Free extensions are tightly connected to free algebras as follows. The free pseudocomplemented distributive lattice $\boldsymbol{F}_\pDL(\kappa)$ is isomorphic to the free extension of the free bounded distributive lattice $\boldsymbol{F}_\DL(\kappa)$. The dual of the latter is the Priestley space $\mathsf{2}^\kappa$, obtained by considering the direct power of the two-element chain $\mathsf{2}$ with universe $\{ 0, 1 \}$, viewed as Priestley space, endowed with the product topology (see, e.g., \cite[Prop.~4.8]{GvG24}). Consequently, from Theorem \ref{Thm : free extensions PDL} we deduce the following.

\begin{Corollary}\cite[Cor.~2.3(i)]{DG80}\label{Cor : free pDL power of two}
Let $\kappa$ be a cardinal. Then $\P(2^\kappa)$ is the dual of $\boldsymbol{F}_\pDL(\kappa)$.
\end{Corollary}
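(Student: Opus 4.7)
The plan is to derive the corollary from Theorem \ref{Thm : free extensions PDL} by combining it with two standard identifications. First, I would establish that $\boldsymbol{F}_\pDL(\kappa)$ can be presented as the pseudocomplemented distributive lattice free over $\boldsymbol{F}_\DL(\kappa)$; this is the only substantive step, though it is folklore. Second, I would invoke the well-known Priestley-dual description $(\mathsf{2}^\kappa)^\ast \cong \boldsymbol{F}_\DL(\kappa)$, already quoted in the paragraph preceding the corollary. The conclusion then follows by applying Theorem \ref{Thm : free extensions PDL} to the Priestley space $X = \mathsf{2}^\kappa$.

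For the first step, I would verify the universal property of free extensions directly. Let $e \colon \boldsymbol{F}_\DL(\kappa) \to \boldsymbol{F}_\pDL(\kappa)$ be the bounded lattice homomorphism obtained by sending each free $\DL$-generator to the corresponding free $\pDL$-generator; the existence and uniqueness of $e$ are guaranteed by the freeness of $\boldsymbol{F}_\DL(\kappa)$ in $\DL$. Given any $\C \in \pDL$ and any bounded lattice homomorphism $f \colon \boldsymbol{F}_\DL(\kappa) \to \C$, restricting $f$ to the $\kappa$ free generators and invoking the freeness of $\boldsymbol{F}_\pDL(\kappa)$ in $\pDL$ yields a unique $\pDL$-morphism $g \colon \boldsymbol{F}_\pDL(\kappa) \to \C$ that agrees with $f$ on the generators. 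This forces $g \circ e = f$ on the free $\DL$-generators and hence, because both maps are bounded lattice homomorphisms and those generators generate $\boldsymbol{F}_\DL(\kappa)$ as such, everywhere. Uniqueness of $g$ follows from the observation that the image of $e$ generates $\boldsymbol{F}_\pDL(\kappa)$ as a pseudocomplemented distributive lattice.

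Putting the pieces together, Theorem \ref{Thm : free extensions PDL} applied to $X = \mathsf{2}^\kappa$ identifies $\P(\mathsf{2}^\kappa)$ as the Priestley dual of the pDL free over $(\mathsf{2}^\kappa)^\ast \cong \boldsymbol{F}_\DL(\kappa)$, which by the first step is isomorphic to $\boldsymbol{F}_\pDL(\kappa)$. I do not expect any real obstacle: the argument is a routine assembly of known results. The only place needing a little care is ensuring that the ``inclusion of generators'' $e$ interacts correctly with both universal properties, which comes down to checking that under $e$ the free $\DL$-generators map to a generating set of $\boldsymbol{F}_\pDL(\kappa)$ in $\pDL$.
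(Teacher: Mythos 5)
Your proposal is correct and follows essentially the same route as the paper, which likewise deduces the corollary from Theorem \ref{Thm : free extensions PDL} together with the facts that $\boldsymbol{F}_\pDL(\kappa)$ is the free extension of $\boldsymbol{F}_\DL(\kappa)$ and that the Priestley dual of $\boldsymbol{F}_\DL(\kappa)$ is $\mathsf{2}^\kappa$. The only difference is that you spell out the universal-property verification of the first (folklore) step, which the paper simply asserts; your verification is sound.
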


\noindent In the case where $\kappa$ is finite, the above result was first discovered in \cite{Urq73}.

\section{Posets with a free skeleton}\label{Sec : 5}

The following concept will play a fundamental role in this paper.

\begin{Definition}\label{def}
A poset $X$ with minimum $\bot$ is said to have a \emph{free skeleton} when the following hold:
\benroman
\item\label{item : def : 1} for all $x \in X$ and nonempty $Y \subseteq \max {\uparrow}x$ there exists an element $s_{x,Y} \in {\uparrow}x$ such that
\[
Y = \max {\uparrow}s_{x,Y};
\]
\item\label{item : def : 2}  for all $x \in X$ and nonempty $Y, Z \subseteq \max {\uparrow}x$,
\[
Y \subseteq Z \text{ implies }s_{x, Z} \leq s_{x, Y};
\]
\item\label{item : def : 3} for all $x \in X$ and nonempty $Y \subseteq \max X$,
\[
\max {\uparrow}x \subseteq Y \text{ implies }s_{\bot, Y} \leq x.
\]
\eroman
\end{Definition}

As shown by the next result, the structure of posets with a free skeleton is very permissive.

\begin{Proposition}\label{Prop : bounded posets have a free skeleton}
Every bounded poset has a free skeleton.
\end{Proposition}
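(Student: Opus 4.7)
The plan is to exploit the fact that a bounded poset is an extremely constrained environment in which to verify Definition \ref{def}: the presence of a maximum $\top$ forces $\max \uparrow x$ to reduce to a singleton for every $x$, so there is essentially no freedom in choosing the elements $s_{x,Y}$. First I would observe that if $X$ is bounded with minimum $\bot$ and maximum $\top$, then $\max X = \{\top\}$ and, for every $x \in X$, $\max \uparrow x = \{\top\}$, since $x \leq \top \in \uparrow x$. Consequently the only nonempty subset $Y \subseteq \max \uparrow x$ available in any of the three clauses is $Y = \{\top\}$ itself.

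Next I would define, for every $x \in X$,
\[
s_{x, \{\top\}} \coloneqq x.
\]
The verification of condition (\ref{item : def : 1}) is then immediate: $s_{x,\{\top\}} = x \in \uparrow x$ and $\max \uparrow s_{x,\{\top\}} = \max \uparrow x = \{\top\}$. Condition (\ref{item : def : 2}) is vacuous, because any nonempty $Y, Z \subseteq \{\top\}$ must both equal $\{\top\}$, so the implication $Y \subseteq Z \Rightarrow s_{x,Z} \leq s_{x,Y}$ degenerates to $x \leq x$. For condition (\ref{item : def : 3}), the only nonempty $Y \subseteq \max X = \{\top\}$ is $Y = \{\top\}$, and $s_{\bot, \{\top\}} = \bot$, which is below every $x \in X$ by the definition of the minimum.

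There is no real obstacle here; the only thing to be careful about is to confirm at the outset that $\max \uparrow x$ is in fact $\{\top\}$ for every $x$ (so that the quantifiers in Definition \ref{def} range over a one-element family of subsets), and to choose the witnesses $s_{x, \{\top\}}$ so that clause (\ref{item : def : 3}), which is the only one with a nontrivial content, is satisfied --- and this is exactly what forces $s_{\bot, \{\top\}} = \bot$, while leaving $s_{x, \{\top\}} = x$ as the natural choice elsewhere.
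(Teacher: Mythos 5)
Your proposal is correct and follows essentially the same route as the paper: both set $s_{x,\{\top\}} \coloneqq x$ for every $x$, note that $\{\top\}$ is the only admissible $Y$, and observe that condition (iii) holds because $s_{\bot,\{\top\}} = \bot$ is the minimum. No gaps.
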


\begin{proof}
Let $X$ be a poset with minimum $\bot$ and maximum $\top$.\ Clearly, for every $x \in X$ the only nonempty subset of $\max \up x$ is $Y \coloneqq \{ \top \}$. Then for each $x \in X$ let $s_{x, Y} \coloneqq x$.
Clearly, $s_{x, Y} \in \up x$. Moreover, $Y = \max \up s_{x, Y}$ because $\top$ is the maximum of $X$ and $Y = \{ \top \}$. Therefore, condition (\ref{item : def : 1}) of Definition \ref{def} holds. Condition (\ref{item : def : 2}) of the same definition holds because $Y$ is the only nonempty subset of $\max X$. Lastly, condition (\ref{item : def : 3}) holds because $s_{\bot, Y}$ is the minimum of $X$ by definition. Hence, the poset $X$ has a free skeleton.
\end{proof}

The next result isolates a method for constructing new posets with a free skeleton from given ones.

\begin{Proposition}\label{Prop : skeletal upsets}
Let $X$ be a poset with a free skeleton and minimum $\bot$. For each nonempty $Y \subseteq \max X$ the poset ${\uparrow}s_{\bot, Y}$ has also a free skeleton.
\end{Proposition}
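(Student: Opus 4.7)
Let $W := \up s_{\bot, Y}$ considered as a subposet of $X$. Its minimum is $\bot_W := s_{\bot, Y}$, and by condition (\ref{item : def : 1}) of the free skeleton of $X$ we have $\max W = Y$. Since $W$ is an upset of $X$, for each $w \in W$ we have $\up^W w = \up^X w$, and consequently $\max \up^W w = \max \up^X w$. Throughout I write $\up$ for the upset operator with respect to whichever ambient poset is clear from context.

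The strategy is to build witnesses $s^W_{w,Z}$ for $W$ by reusing the witnesses already provided for $X$, with one nonuniform choice at the bottom. Concretely, for $w \in W \setminus \{\bot_W\}$ and nonempty $Z \subseteq \max \up w$, set $s^W_{w, Z} := s_{w, Z}$; this lies in $W$ since $W$ is an upset and $s_{w, Z} \geq w$. For $w = \bot_W$ and nonempty $Z \subseteq \max W = Y$, set $s^W_{\bot_W, Z} := s_{\bot, Z}$ (crucially, not $s_{\bot_W, Z}$). Since $Z \subseteq Y$, condition (\ref{item : def : 2}) of the free skeleton of $X$ applied at $x = \bot$ gives $s_{\bot, Y} \leq s_{\bot, Z}$, so $s_{\bot, Z}$ genuinely lies in $W$.

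Conditions (\ref{item : def : 1}) and (\ref{item : def : 2}) of Definition \ref{def} for $W$ then follow at once from the corresponding conditions for $X$, using that $\up^W$ and $\up^X$ agree on $W$. Condition (\ref{item : def : 3}) is the crux: given $w \in W$ and nonempty $Z \subseteq \max W = Y$ with $\max \up w \subseteq Z$, I want to conclude $s^W_{\bot_W, Z} \leq w$, i.e., $s_{\bot, Z} \leq w$. Because $Y \subseteq \max X$, we have $Z \subseteq \max X$, so condition (\ref{item : def : 3}) of the free skeleton of $X$ applied to $w$ and $Z$ delivers exactly $s_{\bot, Z} \leq w$.

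The main obstacle is condition (\ref{item : def : 3}), and it is what forces the nonuniform choice at the bottom of $W$: had one set $s^W_{\bot_W, Z} := s_{\bot_W, Z} = s_{s_{\bot, Y}, Z}$, the required inequality would not be available from $X$'s condition (\ref{item : def : 3}), which only controls witnesses of the form $s_{\bot, -}$. Replacing this by $s_{\bot, Z}$ resolves the difficulty, modulo the check that $s_{\bot, Z}$ still lies in $W$, which (as noted) is precisely where condition (\ref{item : def : 2}) for $X$ is used.
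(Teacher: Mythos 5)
Your proof is correct and follows essentially the same route as the paper: you define the witnesses for ${\uparrow}s_{\bot,Y}$ by reusing $s_{w,Z}$ above the new bottom and making the same nonuniform substitution $s_{\bot,Z}$ at the new bottom, and you verify the three conditions in the same way. The only (immaterial) difference is how membership of the witnesses in ${\uparrow}s_{\bot,Y}$ is checked: you use the upset property together with condition (\ref{item : def : 2}), while the paper deduces it uniformly from condition (\ref{item : def : 3}) after computing $\max{\uparrow}s_{x,Z} = Z \subseteq Y$.
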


\begin{proof}
We will define a family
\[
S \coloneqq \{ s^\ast_{x, Z} : x \in \up s_{\bot, Y} \text{ and }\varnothing \ne Z \subseteq \max \up x \}
\]
that globally satisfies the conditions of Definition \ref{def} for the poset ${\uparrow}s_{\bot, Y}$. For each $x \in \up s_{\bot, Y}$ and nonempty $Z \subseteq \max \up x$ let
\[
s^\ast_{x, Z} \coloneqq  \begin{cases}
		  s_{x, Z}  & \text{ if } x > s_{\bot, Y};\\
		s_{\bot, Z} & \text{ if }x = s_{\bot, Y}.
		\end{cases}
\]

Since $x \in \up s_{\bot, Y}$, condition (\ref{item : def : 1}) of Definition \ref{def} yields
\[
\max \up s_{x, Z} = \max \up s_{\bot, Z} = Z \subseteq \max \up x \subseteq \max {\uparrow}s_{\bot, Y} = Y.
\]
By condition (\ref{item : def : 3}) of Definition \ref{def} we conclude that $s_{\bot, Y} \leq s_{x, Z}, s_{\bot, Z}$.\ It follows that $s^\ast_{x, Z}$ belongs to ${\uparrow}s_{\bot, Y}$. 
Therefore, it only remains to prove that the family $S$ globally satisfies the conditions of Definition \ref{def} for the poset ${\uparrow}s_{\bot, Y}$. To this end, we will use without further notice the fact that the family $\{
s_{x, Z} : x \in X \text{ and }\varnothing \ne Z \subseteq \max \up x 	\}$ witnesses the property of ``having a free skeleton'' for $X$.

Clearly, $\up s_{\bot, Y}$ has a minimum. Moreover, condition (\ref{item : def : 1}) of Definition \ref{def} holds because for each $x \in \up s_{\bot, Y}$ and nonempty $Z \subseteq \max \up x$ we have $Z = \max \up s_{x, Z} = \max \up s_{\bot, Z}$, whence $Z = \max \up s^\ast_{x, Z}$ by the definition of $s^\ast_{x, Z}$. To prove condition (\ref{item : def : 2}), consider $x \in \up s_{\bot, Y}$ and nonempty $Z_1, Z_2 \subseteq \max {\uparrow}x$ such that $Z_1 \subseteq Z_2$. Notice that $Z_1, Z_2 \subseteq \max {\uparrow}x \cap \max {\uparrow}\bot$. Consequently, $s_{x, Z_2} \leq s_{x, Z_1}$ and $s_{\bot, Z_2} \leq s_{\bot, Z_1}$. Together with the definition of $s^\ast_{x, Z_1}$ and $s^*_{x, Z_2}$, this yields 
 $s^*_{x,Z_2} \leq s^*_{x,Z_1}$. Therefore, it only remains to prove condition (\ref{item : def : 3}). Observe that $s_{\bot, Y}$ is the minimum of $\up s_{\bot, Y}$ and consider $x \in \up s_{\bot, Y}$ and a nonempty $Z \subseteq \max \up s_{\bot, Y}$ such that $\max \up x \subseteq Z$. From $x \in X$, $\varnothing \ne Z \subseteq \max X$, and $\max \up x \subseteq Z$ it follows that $s_{\bot, Z} \leq x$. By the definition of $s^*_{s_{\bot, Y}, Z}$ this amounts to $s^*_{s_{\bot, Y}, Z} \leq x$. As $s_{\bot, Y}$ is the minimum of $\up s_{\bot, Y}$, this establishes condition (\ref{item : def : 3}) for the poset $\up s_{\bot, Y}$.
\end{proof}

Lastly, we will rely on the next observations.

\begin{Proposition}\label{Prop : extension}
Let $X$ and $Y$ be finite posets such that $Y$ has a free skeleton and let $U$ be an upset of $X$. Every weak p-morphism $p \colon U \to Y$ can be extended to a weak p-morphism $p^+ \colon X \to Y$.
\end{Proposition}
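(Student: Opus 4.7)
The plan is to extend $p$ by defining $p^+$ on $X \setminus U$ with the help of the free skeleton of $Y$; write $\bot$ for the minimum of $Y$. The key enabling observation is that, since $U$ is an upset, $\max U = \max X \cap U$ and, for every $m \in \max X \setminus U$, no $u \in U$ satisfies $u \leq m$ (otherwise $m$ itself would lie in $U$). Hence $p^+(m)$ is subject to no order-preservation constraint coming from $p$, and any fixed element $m_Y \in \max Y$ can be assigned to all such $m$.

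Fix $m_Y \in \max Y$. Define $p^+ \colon X \to Y$ by setting $p^+(x) := p(x)$ for $x \in U$, $p^+(m) := m_Y$ for $m \in \max X \setminus U$, and, by recursion on decreasing height, for every $x \in X \setminus U$ that is not maximal,
\[
p^+(x) := s_{\bot, Z(x)}, \qquad \text{where } Z(x) := p^+[\max \up x].
\]
The recursion is well-founded because $X$ is finite and $\max \up x$ lies strictly above $x$ whenever $x \notin \max X$. Applying Proposition \ref{Prop : preservation of maximal}(\ref{item : preservation of maximal : 2}) to $p$ at the points of $\max \up x \cap U \subseteq \max U$, together with the definition of $p^+$ outside $U$, shows that $Z(x)$ is a nonempty subset of $\max Y$; thus $s_{\bot, Z(x)}$ is well defined by Definition \ref{def}(\ref{item : def : 1}) and satisfies $\max \up s_{\bot, Z(x)} = Z(x)$.

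Order preservation and the weak p-morphism condition are then verified by case analysis on the position of the relevant points relative to $U$. For the monotonicity of $p^+$, the only nontrivial case is $x \in X \setminus U$, $x' \in U$, and $x \leq x'$: since $x' \in U$ and $U$ is an upset, $\up x' \subseteq U$, and Proposition \ref{Prop : preservation of maximal}(\ref{item : preservation of maximal : 1}) applied to $p$ yields $\max \up p(x') = p[\max \up x'] \subseteq p^+[\max \up x] = Z(x)$; Definition \ref{def}(\ref{item : def : 3}) then gives $p^+(x) = s_{\bot, Z(x)} \leq p(x') = p^+(x')$. The other monotonicity cases follow from the order preservation of $p$ and Definition \ref{def}(\ref{item : def : 2}). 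For the weak p-morphism condition, pick $x \in X$ and $y \in \max Y$ with $p^+(x) \leq y$: when $x \in U$, the conclusion follows from the corresponding property of $p$ together with $\up x \subseteq U$ (so the maximal elements of $\up x$ coincide whether computed in $U$ or in $X$); when $x \in \max X \setminus U$, $p^+(x) = m_Y \in \max Y$ forces $y = p^+(x)$ and $z = x$ works; and when $x \in X \setminus U$ is not maximal, Definition \ref{def}(\ref{item : def : 1}) yields $\max \up p^+(x) = Z(x) = p^+[\max \up x]$, which directly produces the required $z$.

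The chief subtlety is the mixed monotonicity case, which is precisely the situation that condition (\ref{item : def : 3}) of the free skeleton definition is tailored to handle; once the inclusion $\max \up p(x') \subseteq Z(x)$ is in place, the free skeleton assumption does the rest.
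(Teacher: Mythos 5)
Your proposal is correct and follows essentially the same route as the paper: since $\max{\uparrow}x \subseteq \max X$ for finite posets, your ``recursively'' defined $Z(x) = p^+[\max{\uparrow}x]$ coincides with the paper's explicitly defined set $e(x)$ (which is $p[U \cap \max{\uparrow}x]$, augmented by the fixed maximal element when $\max{\uparrow}x \not\subseteq U$), and the extension $p^+$ is identical. The verification of monotonicity and the weak p-morphism condition proceeds by the same case analysis, differing only in that you invoke condition (ii) of Definition~\ref{def} in the case where both points lie outside $U$, whereas the paper handles all such cases uniformly via condition (iii).
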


\begin{proof}
Since $Y$ has a free skeleton, it is nonempty. As it is also finite, $\max Y$ is nonempty. So, we can fix an element $w$ of $\max Y$. For each $x \in X$ let
\[
d(x) \coloneqq p[U \cap \max \up x]\qquad \text{and} \qquad
e(x) \coloneqq  \begin{cases}
		  d(x)  & \text{ if } \max {\uparrow}x \subseteq U;\\
		d(x) \cup \{ w \} & \text{ otherwise}.
		\end{cases}
\]
Notice that for each $x \in X$ the set $e(x)$ is a subset of $\max Y$ by Proposition~\ref{Prop : preservation of maximal}(\ref{item : preservation of maximal : 2}).
Since $X$ is finite, $\max \up x \subseteq U$ implies $U \cap \max \up x = \max \up x \ne \varnothing$. Thus, $e(x)$ is nonempty for every $x \in X$.

Now, let $p^+ \colon X \to Y$ be the map defined for every $x \in X$ as
\[
p^+(x) \coloneqq  \begin{cases}
		 p(x) & \text{ if } x \in U ;\\
		s_{\bot, e(x)} & \text{ if }x \notin U \cup \max X;\\
		w & \text{ if }x \in \max X - U.
		\end{cases}
\]
As the three cases in the definition of $p^+$ are mutually exclusive and each element of $X$ satisfies one of them, the map $p^+ \colon X \to Y$ is a well-defined map that extends $p \colon U \to Y$. It only remains to prove that $p^+$ is a weak p-morphism. 
We begin with the following observation:

\begin{Claim}\label{Claim : e is max}
For every $x \in X$ we have $e(x) = \max \up p^+(x)$.
\end{Claim}

\begin{proof}[Proof of the Claim]
We will consider the three cases in the definition of $p^+$ separately. First, suppose that $x \in U$. Since $U$ is an upset containing $x$, we have $\max {\uparrow}x \subseteq U$. By the definition of $e$ we also have $e(x) = d(x) = p[U \cap \max \up x] = p[\max \up x]$. As $p \colon U \to Y$ is a weak p-morphism, we obtain $p[\max \up x] = \max \up p(x)$ by Proposition~\ref{Prop : preservation of maximal}(\ref{item : preservation of maximal : 1}). Thus, $e(x) = \max \up p^+(x)$ as desired because $p(x)=p^+(x)$.

 In the case where $x \notin U \cup \max X$, we have $\max \up p^+(x) = \max \up s_{\bot, e(x)} = e(x)$ by the definition of $p^+$ and condition~(\ref{item : def : 1}) of Definition \ref{def}. Therefore, it only remains to consider the case where $x \in \max X - U$. We have $U \cap \max \up x = U \cap \{x\}= \varnothing$. The definitions of $e$ and $p^+$ yield $e(x)=d(x) \cup \{w\}=\{w\}$ and $\max \up p^+(x) = \max \up w = \{w\}$ because $w \in \max Y$.
\end{proof}

Now, we prove that $p^+$ is order preserving. To this end, let $x, y \in X$ be such that $x < y$. We may assume that either $x$ or $y$ does not belong to $U$, otherwise we are done because $p \colon U \to Y$ is order preserving and $p^+$ extends $p$. Since $U$ is an upset and $x < y$, this means that $x \notin U$. As $x < y$, we also have $x \notin \max X$. Then $x \notin U \cup \max X$. By the definition of $p^+$ we obtain $p^+(x) = s_{\bot, e(x)}$. In view of Claim \ref{Claim : e is max}, we have $e(y) = \max {\uparrow}p^+(y)$. 
From $x \leq y$ it follows that 
$d(y) \subseteq d(x)$. Since $\max \up x \subseteq U$ implies $\max \up y \subseteq U$, we also have $e(y) \subseteq e(x)$.
Therefore, $\max {\uparrow}p^+(y) \subseteq e(x)$. By condition (\ref{item : def : 3}) of Definition \ref{def}  we conclude that $p^+(x) = s_{\bot, e(x)} \leq p^+(y)$ as desired.

It only remains to prove that $p^+$ satisfies the weak p-morphism condition.\ Consider $x \in X$ and $z \in \max {\uparrow}p^+(x)$.\ From Claim \ref{Claim : e is max} we obtain $z \in e(x)$.\ We have two cases:\ either $z \in d(x)$ or $z \notin d(x)$. First suppose that $z \in d(x)$. Then there exists $y \in U \cap \max \up x$ such that $p(y) = z$ and we are done. Then we consider the case where $z \notin d(x)$. Since $z \in e(x)$, we obtain $z \in e(x) - d(x)$. By the definition of $e$ this implies that $z = w$ and there exists $y \in \max \up x - U$. So, $y \in \max X - U$, and the definition of $p^+$ yields $p^+(y) = w = z$. Since $y \in \max \up x$, this establishes that $p^+$ is a weak p-morphism.
\end{proof}

\begin{Proposition}\label{Prop : equiv free skel}
Let $\A \in \pDL$ be finite. Then $\A_*$ has a free skeleton if and only if $\A$ is nontrivial and the following  holds:
\benroman
\item\label{item : equiv free skel : 1} for all $a \in \Jirr(\A)$ and nonempty $Y \subseteq \At(\A) \cap \down a$ there exists $c_{a,Y} \in \Jirr(\A) \cap \down a$ such that 
\[
Y = \At(\A) \cap \down c_{a,Y};
\]
\item\label{item : equiv free skel : 2}  for all $a \in \Jirr(\A)$ and nonempty $Y, Z \subseteq \At(\A) \cap \down a$,
\[
Y \subseteq Z \text{ implies }c_{a, Y} \leq c_{a, Z};
\]
\item\label{item : equiv free skel : 3} for all $b \in A - \{ 0 \}$ we have $\neg \neg b \in \Jirr(\A)$.
\eroman
\end{Proposition}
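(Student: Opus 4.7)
The plan is to translate Definition \ref{def} applied to $X = \A_\ast$ into conditions on $\A$ via the dual isomorphism $\min \colon \A_\ast \to \Jirr(\A)$ of Proposition \ref{Prop : correspondence prime filters join irr}, which restricts to a bijection between $\max \A_\ast$ and $\At(\A)$. Under this correspondence, $x \in \A_\ast$ matches $a = \min x \in \Jirr(\A)$, the upset $\up x$ of $\A_\ast$ matches $\Jirr(\A) \cap \down a$, and $\max \up x$ matches $\At(\A) \cap \down a$. With these identifications, conditions (i) and (ii) of the proposition become verbatim (order-reversing) translations of the homonymous clauses of Definition \ref{def}, so the crux of the argument lies in relating condition (iii) in the two statements.

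For the forward direction, suppose $\A_\ast$ has a free skeleton with witnesses $\{s_{x,Y}\}$. The existence of the minimum $\bot$ of $\A_\ast$ forces $\A$ to be nontrivial, and setting $c_{a,Y} \coloneqq \min s_{x,Y'}$ (where $Y'$ is the translate of $Y$) yields clauses (i) and (ii) of the proposition at once. For clause (iii), I would identify $\A$ with the lattice of upsets of $\A_\ast$; writing $b \in A \setminus \{0\}$ as a nonempty upset $U$ and setting $Y' \coloneqq \max U$, the double pseudocomplement $\lnot\lnot b$ corresponds to the upset $\{y \in \A_\ast : \max \up y \subseteq Y'\}$, because $\lnot$ sends an upset $V$ to $(\down V)^c$. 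The element $s_{\bot, Y'}$ supplied by clause (i) of Definition \ref{def} satisfies $\max \up s_{\bot, Y'} = Y'$, and clause (iii) of the same definition then forces every $y$ with $\max \up y \subseteq Y'$ to lie in $\up s_{\bot, Y'}$; hence $\lnot\lnot b$ corresponds to the principal upset $\up s_{\bot, Y'}$ and is therefore join-irreducible.

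For the backward direction, assume $\A$ is nontrivial and (i)--(iii) of the proposition hold. Applying (iii) to $b = 1$ gives $1 = \lnot\lnot 1 \in \Jirr(\A)$, so $\{1\}$ is a prime filter and the minimum $\bot$ of $\A_\ast$ exists. For each nonempty $Y \subseteq \max \A_\ast$, viewed via the bijection as a subset of $\At(\A)$, I would introduce
\[
c_Y \coloneqq \lnot \bigvee (\At(\A) \setminus Y),
\]
check via Proposition \ref{Prop : atoms underneath} and distributivity that $c_Y \neq 0$, use Proposition \ref{Prop : triple negation} to rewrite $c_Y$ as $\lnot\lnot c_Y$, and conclude from (iii) of the proposition that $c_Y \in \Jirr(\A)$. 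A routine distributivity calculation gives $\At(\A) \cap \down c_Y = Y$, and the defining adjunction $a \wedge b = 0 \iff a \leq \lnot b$ shows that $c_Y$ is the \emph{largest} element of $A$ whose atoms all lie in $Y$. I would then define the witnessing family for $\A_\ast$ by letting $s_{\bot, Y}$ correspond to $c_Y$ and, for $x > \bot$, letting $s_{x,Y}$ correspond to the witness $c_{a,Y}$ provided by (i) of the proposition with $a = \min x$. Clauses (i) and (ii) of Definition \ref{def} follow from (i) and (ii) of the proposition together with the antitonicity of $\lnot$ (which handles the slice at $\bot$), and clause (iii) is precisely the maximality of $c_Y$.

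The main obstacle is that condition (i) of the proposition merely produces \emph{some} join-irreducible $c_{1,Y}$ with $\At(\A) \cap \down c_{1,Y} = Y$, with no a priori guarantee of maximality among join-irreducibles with this property; yet maximality is exactly what clause (iii) of Definition \ref{def} demands. Replacing this abstract witness by the explicit $c_Y = \lnot \bigvee (\At(\A) \setminus Y)$ resolves the tension: condition (iii) of the proposition (combined with Proposition \ref{Prop : triple negation}) guarantees its join-irreducibility, while the pseudocomplement adjunction yields the required maximality for free.
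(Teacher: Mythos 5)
Your proof is correct, and its scaffolding matches the paper's: both arguments push Definition \ref{def} through the dual isomorphism of Proposition \ref{Prop : correspondence prime filters join irr}, note that clauses (i) and (ii) translate verbatim, and reduce the whole statement to matching clause (iii) of the Definition (which only constrains the $\bot$-slice witnesses $s_{\bot,Y}$) against condition (iii) of the Proposition. Where you genuinely diverge is in how that last equivalence is proved. The paper stays on the algebraic side throughout: it first proves a small Claim ($\At(\A)\cap\down c=\At(\A)\cap\down\neg\neg c$, and $\At(\A)\cap\down c\subseteq\At(\A)\cap\down d$ implies $c\leq\neg\neg d$), then in the forward direction squeezes $\neg\neg b$ between the join of the join-irreducibles below it and $c_{1,Y}$ to get $\neg\neg b=c_{1,Y}\in\Jirr(\A)$, and in the backward direction takes $c_{1,Y}\coloneqq\neg\neg\bigvee Y$. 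You instead argue the forward direction dually, identifying $\A$ with the upset lattice of $\A_\ast$ and showing that $\neg\neg U$ equals the principal upset ${\uparrow}s_{\bot,\max U}$; this is a clean and more conceptual route, at the modest extra cost of invoking that the pseudocomplement of an upset $V$ is $(\down V)^c$ and that principal upsets are exactly the join-irreducibles of the upset lattice (both standard and easily checked for finite posets). In the backward direction your witness $\neg\bigvee(\At(\A)\setminus Y)$ differs formally from the paper's $\neg\neg\bigvee Y$, but the two coincide --- each is the largest element of $\A$ whose atoms lie in $Y$ --- and your verification via the adjunction $a\wedge b=0\iff a\leq\neg b$ is sound. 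Your closing remark also pinpoints the one real subtlety (condition (i) of the Proposition carries no maximality, while clause (iii) of Definition \ref{def} demands it) and resolves it exactly as the paper does, by replacing the abstract $\bot$-slice witnesses with an explicit regular element.
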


\begin{proof}
We first establish the following claim.

\begin{Claim}\label{Claim : properties double neg and atoms}
Let $c,d \in A$. Then $\At(\A) \cap \down c = \At(\A) \cap \down \neg \neg c$ and
\[
\At(\A) \cap \down c \subseteq \At(\A) \cap \down d \text{ implies } c \leq \neg \neg d.
\]
\end{Claim}

\begin{proof}[Proof of the Claim]
We begin by showing that $\At(\A) \cap \down c = \At(\A) \cap \down \neg \neg c$. The inclusion from left to right holds because $c \leq \neg \neg c$. To prove the other inclusion, consider $e \in \At(\A) \cap \down \neg \neg c$. Then $e \leq \neg \neg c$, and so $e \wedge \neg c = 0$. Since $e$ is an atom, if $e \nleq c$, then $e \wedge c = 0$, and hence $e \leq \neg c$. But this is impossible because otherwise $e = e \wedge \neg c = 0$,  a contradiction with $e \in \At(\A)$. Thus, we conclude that $e \leq c$.\ This establishes the inclusion from right to left. Hence, $\At(\A) \cap \down c = \At(\A) \cap \down \neg \neg c$.

Now, assume that $\At(\A) \cap \down c \subseteq \At(\A) \cap \down d$. We prove that $c \leq \neg \neg d$. Suppose, with a view to contradiction, that $c \nleq \neg \neg d$. Then $c \wedge \neg d \ne 0$. Since $\A$ is finite, there exists $e \in \At(\A)$ with $e \leq c \wedge \neg d$ (see Proposition~\ref{Prop : atoms underneath}). Consequently $e \leq c$, but $e \nleq d$  otherwise $e \leq d \wedge \neg d = 0$, a contradiction with $e \in \At(\A)$. So, $e \in  \At(\A) \cap \down c$ and $e \notin \At(\A) \cap \down d$, contradicting the hypothesis. Thus, we conclude that $c \leq \neg \neg d$.
\end{proof}

Recall from Proposition~\ref{Prop : correspondence prime filters join irr} that the maps $\up(-) \colon \Jirr(\A) \to \A_*$ and $\min \colon \A_\ast \to \Jirr(\A)$ are dual order isomorphisms inverse of one another, which restrict to bijections between $\At(\A)$ and $\max \A_*$. Moreover, $\A_*$ has a least element if and only if $1 \in \Jirr(\A)$ because $\A$ is finite.
It is then straightforward to check that $\A_*$ has a free skeleton if and only if $1 \in \Jirr(\A)$ and $\A$ satisfies conditions (\ref{item : equiv free skel : 1}) and (\ref{item : equiv free skel : 2}) in the statement, as well as the following condition:
\benroman
\myitem{iii*}\label{item : equiv free skel : 3star} for all $a \in \Jirr(\A)$ and nonempty $Y \subseteq \At(\A)$,
\[
\At(\A) \cap \down a \subseteq Y \text{ implies } a \leq c_{1,Y}.
\] 
\eroman
Therefore, it only remains to show that $1 \in \Jirr(\A)$ and conditions (\ref{item : equiv free skel : 1}), (\ref{item : equiv free skel : 2}), and (\ref{item : equiv free skel : 3star}) hold if and only if $\A$ is nontrivial and conditions (\ref{item : equiv free skel : 1}), (\ref{item : equiv free skel : 2}), and (\ref{item : equiv free skel : 3}) hold. 

First, suppose that $1 \in \Jirr(\A)$ and conditions (\ref{item : equiv free skel : 1}), (\ref{item : equiv free skel : 2}), and (\ref{item : equiv free skel : 3star}) hold. From $1 \in \Jirr(\A)$ it follows that $1 \ne 0$. Hence, $\A$ is nontrivial. To show that (\ref{item : equiv free skel : 3}) holds, consider $b \in A - \{ 0 \}$. We prove that $\neg \neg b \in \Jirr(\A)$. Since $\A$ is finite and $b \ne 0$, the set $Y \coloneqq  \At(\A) \cap \down b$ is nonempty. Moreover, by Claim~\ref{Claim : properties double neg and atoms} we have $\At(\A) \cap \down \neg \neg b = \At(\A) \cap \down b = Y$. Thus, for every $a \in \Jirr(\A)$ with $a \leq \neg \neg b$ we have $\At(\A) \cap \down a \subseteq \At(\A) \cap \down \neg \neg b = Y$. Then (\ref{item : equiv free skel : 3star}) yields that $a \leq c_{1,Y}$ for every $a \in \Jirr(\A)$ such that $a \leq \neg \neg b$. Since $\A$ is finite, $\neg \neg b$ is the join of the join-irreducibles below it and, therefore, $\neg \neg b \leq c_{1,Y}$. By (\ref{item : equiv free skel : 1}) we have that $\At(\A) \cap \down c_{1,Y} = Y$. Since $Y = \At(\A) \cap \down b$, Claim~\ref{Claim : properties double neg and atoms} yields  $c_{1,Y} \leq \neg \neg b$. It follows that $\neg \neg b = c_{1,Y} \in \Jirr(\A)$. Hence, (\ref{item : equiv free skel : 3}) holds.

Conversely, assume that $\A$ is nontrivial and (\ref{item : equiv free skel : 1}), (\ref{item : equiv free skel : 2}), and (\ref{item : equiv free skel : 3}) hold. We need to prove that $1 \in \Jirr(\A)$ and (\ref{item : equiv free skel : 3star}) holds. As $\A$ is nontrivial,  we have $1 \in A - \{ 0 \}$. Together with $1= \neg \neg 1$ and (\ref{item : equiv free skel : 3}), this yields $1 \in \Jirr(\A)$. Now, for each nonempty $Y \subseteq \At(\A)$ define $c_{1,Y} \coloneqq \neg \neg \bigvee Y$, which is join-irreducible by (\ref{item : equiv free skel : 3}). To conclude the proof, it suffices to show that the elements $c_{1, Y}$ defined in this way satisfy (\ref{item : equiv free skel : 1}), (\ref{item : equiv free skel : 2}), and (\ref{item : equiv free skel : 3star}).

First, by Claim~\ref{Claim : properties double neg and atoms} we have $\At(\A) \cap \down c_{1,Y} = \At(\A) \cap \down \bigvee Y$.  Since atoms of distributive lattices are join-prime and $Y$ is a set of atoms, $\At(\A) \cap \down \bigvee Y = Y$. Thus, $\At(\A) \cap \down c_{1,Y} = Y$, whence~(\ref{item : equiv free skel : 1}) holds for $c_{1,Y}$. Then let $Z,Y \subseteq \At(\A)$ be nonempty. If $Y \subseteq Z$, then $\At(\A) \cap \down c_{1,Y} = Y \subseteq Z = \At(\A) \cap \down c_{1,Z}$. Then   Claim~\ref{Claim : properties double neg and atoms} implies $c_{1,Y} \leq \neg \neg c_{1,Z}$. Since $c_{1,Z}=\neg \neg \bigvee Z$, we have $\neg \neg c_{1,Z} = c_{1,Z}$ because $\neg \neg \neg b = \neg b$ for every $b \in A$ (see Proposition~\ref{Prop : triple negation}). Thus, $c_{1,Y} \leq c_{1,Z}$ and, therefore, condition (\ref{item : equiv free skel : 2}) holds for the elements of the form $c_{1,Y}$ and $c_{1,Z}$. To show that (\ref{item : equiv free skel : 3star}) holds, consider $a \in \Jirr(\A)$ and a nonempty subset $Y$ of $\At(\A)$ such that $\At(\A) \cap \down a \subseteq Y$. We show that $a \leq c_{1,Y}$. From (\ref{item : equiv free skel : 1}) it follows that $Y=\At(\A) \cap \down c_{1,Y}$. So,  $\At(\A) \cap \down a \subseteq \At(\A) \cap \down c_{1,Y}$, which implies $a \leq \neg \neg c_{1,Y}$ by Claim~\ref{Claim : properties double neg and atoms}. 
From the definition of $c_{1,Y}$ and Proposition~\ref{Prop : triple negation} it follows that $\neg \neg c_{1,Y} = c_{1,Y}$. Thus, $a \leq c_{1,Y}$. Then  (\ref{item : equiv free skel : 3star}) holds.
\end{proof}

\section{Models of the universal theory}

The aim of this section is to establish the following description of the models of the universal theory  $\mathsf{Th}_\forall(\boldsymbol{F}_\pDL(\aleph_0))$ of the free pseudocomplemented distributive lattice $\boldsymbol{F}_\pDL(\aleph_0)$.

\begin{Theorem}\label{Thm : MAIN}
The class of models of $\mathsf{Th}_\forall(\boldsymbol{F}_\pDL(\aleph_0))$ is
\[
\{ \A \in \pDL : \B_\ast\text{ has a free skeleton for every finite subalgebra }\B\text{ of }\A\}.
\]
\end{Theorem}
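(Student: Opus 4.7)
The plan is to combine Theorem~\ref{Thm : class of models of universal theory : general} with a dual characterization, on the side of Priestley spaces, of which finite members of $\pDL$ embed into $\boldsymbol{F}_\pDL(\aleph_0)$. Since $\pDL$ is a locally finite variety (Theorem~\ref{Thm : PDL locally finite}) containing $\boldsymbol{F}_\pDL(\aleph_0)$, applying Theorem~\ref{Thm : class of models of universal theory : general} with $\mathsf{V}=\pDL$ and $\mathsf{K}=\{\boldsymbol{F}_\pDL(\aleph_0)\}$ identifies the class of models of $\mathsf{Th}_\forall(\boldsymbol{F}_\pDL(\aleph_0))$ with $\{\A\in\pDL : \B\in\III\SSS(\boldsymbol{F}_\pDL(\aleph_0))\text{ for every finite subalgebra }\B\text{ of }\A\}$. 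It therefore suffices to prove that for every finite $\B\in\pDL$,
\[
(\dagger)\qquad \B\text{ embeds into }\boldsymbol{F}_\pDL(\aleph_0)\iff\B_\ast\text{ has a free skeleton.}
\]

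Next, I would translate the left-hand side of $(\dagger)$ to the dual side. By Corollary~\ref{Cor : fin gen free algebras cover everything}, together with Proposition~\ref{Prop : free algebra : technical prop}(\ref{item : 2 : free algebra : technical prop}) for the reverse implication, a finite $\B\in\pDL$ embeds into $\boldsymbol{F}_\pDL(\aleph_0)$ iff it embeds into $\boldsymbol{F}_\pDL(n)$ for some $n\in\mathbb{Z}^+$. Combining Corollary~\ref{Cor : free pDL power of two} with Proposition~\ref{Prop : weak p-morphic image} then recasts this as: \emph{$\B_\ast$ is a weak p-morphic image of $\P(2^n)$ for some $n\in\mathbb{Z}^+$}. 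Hence the theorem reduces to the following poset-theoretic equivalence, in which the remaining work resides:
\[
(\ast)\qquad \text{a finite poset }Y\text{ is a weak p-morphic image of some }\P(2^n) \iff Y\text{ has a free skeleton.}
\]

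For the forward direction of $(\ast)$ I would first verify by direct computation that $\P(X)$ has a free skeleton for every finite poset $X$ with minimum: using Remark~\ref{Rem : finite free extensions} one observes $\max\up\langle x,C\rangle=\{\langle y,\{y\}\rangle:y\in C\}$, from which the family $s_{\langle x,C\rangle,\,\{\langle y,\{y\}\rangle : y\in C_0\}}\coloneqq\langle x,C_0\rangle$ visibly satisfies conditions (\ref{item : def : 1})--(\ref{item : def : 3}) of Definition~\ref{def}; taking $X=2^n$ handles the source. I would then show that a surjective weak p-morphism $p\colon X\twoheadrightarrow Y$ between finite posets transports the free skeleton from $X$ to $Y$: fix lifts $x_y\in p^{-1}(y)$ with $x_{\bot_Y}=\bot_X$, and for $y\in Y$ and nonempty $Y_0\subseteq\max\up y$ set $W_{y,Y_0}\coloneqq\max\up x_y\cap p^{-1}(Y_0)$ (nonempty by the weak p-morphism property) and $s^Y_{y,Y_0}\coloneqq p(s^X_{x_y,W_{y,Y_0}})$; the three clauses of Definition~\ref{def} for $Y$ then follow coherently from those for $X$ via Proposition~\ref{Prop : preservation of maximal}(\ref{item : preservation of maximal : 1}), with condition~(\ref{item : def : 3}) in particular reducing to the observation that $\max\up y\subseteq Y_0$ forces $\max\up x_y\subseteq W_{\bot_Y,Y_0}$. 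For the converse of $(\ast)$, given $Y$ with a free skeleton, I would pick $n$ with $2^n\geq|\max Y|$ and construct a surjective weak p-morphism $\P(2^n)\twoheadrightarrow Y$ by first defining it on a suitable upset of $\P(2^n)$ of the form $\up s_{\bot,Z}$ (which itself has a free skeleton by Proposition~\ref{Prop : skeletal upsets}) in a manner that mirrors the skeleton family $\{s^Y_{\bot,Y_0}\}$ of $Y$ on the nose, and then extending the partial morphism to all of $\P(2^n)$ via Proposition~\ref{Prop : extension}. I expect the main obstacle to be precisely this converse direction: turning the purely combinatorial data encoded in the free skeleton of $Y$ into an actual, coherent, surjective weak p-morphism from $\P(2^n)$ in a way that is simultaneously order preserving, honours the weak p-morphism clause, and covers every element of $Y$.
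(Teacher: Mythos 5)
Your reduction is exactly the paper's: Theorem~\ref{Thm : class of models of universal theory : general} plus local finiteness brings everything down to the claim that a finite $\B \in \pDL$ embeds into $\boldsymbol{F}_\pDL(\aleph_0)$ iff $\B_\ast$ has a free skeleton, and Corollary~\ref{Cor : fin gen free algebras cover everything}, Corollary~\ref{Cor : free pDL power of two}, and Proposition~\ref{Prop : weak p-morphic image} turn this into the poset statement $(\ast)$. Your forward direction of $(\ast)$ is correct and is a pleasant repackaging of the paper's argument: you first exhibit an explicit free skeleton on $\P(\mathsf{2}^n)$ (indeed $s_{\langle x,C\rangle,\{\langle y,\{y\}\rangle\,:\,y\in C_0\}}=\langle x,C_0\rangle$ works) and then prove a general transport lemma for surjective weak p-morphisms; the paper instead builds the skeleton of the image directly via the sets $m_Y$, but unwinding the definitions shows the two constructions coincide, and your version isolates the reusable content more cleanly.

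The genuine gap is the converse of $(\ast)$, which is the mathematical core of the theorem and which you only gesture at. Two concrete problems with your sketch. First, the choice $2^n\geq\lvert\max Y\rvert$ is unsubstantiated; the paper's exponent $k$ is assembled recursively and is much larger. Second, and more seriously, defining the map on a single upset ${\uparrow}s_{\bot,Z}$ of $\P(\mathsf{2}^n)$ and then invoking Proposition~\ref{Prop : extension} cannot yield surjectivity: the extension $p^+$ sends every point outside the chosen upset $U$ to an element of the form $s_{\bot,e(x)}$ or to a fixed maximal element $w$, so its image is essentially $p[U]$ together with finitely many skeleton points; any element of $Y$ not already covered by the partial map is never hit. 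The paper resolves this by an induction on $\lvert\max X\rvert$: it covers $X$ as ${\uparrow}s_{\bot,M_1}\cup\dots\cup{\uparrow}s_{\bot,M_{n+1}}\cup{\downarrow}\{v_1,\dots,v_t\}$, applies the inductive hypothesis to each ${\uparrow}s_{\bot,M_i}$ (which has strictly fewer maximal elements and a free skeleton by Proposition~\ref{Prop : skeletal upsets}) and a separate base case to each bounded poset ${\downarrow}v_j$, realizes all of these targets on pairwise disjoint upsets $W_i, V_j$ inside one large $\P(\mathsf{2}^k)$ via a combinatorial claim about $\mathsf{2}^k$, glues the resulting maps into a surjective weak p-morphism on the union, and only then extends by Proposition~\ref{Prop : extension}. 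None of this decomposition, the induction, or the disjointness argument appears in your plan, so as written the converse direction is not proved.
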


The above result is a consequence of the following fact, as we proceed to explain.

\begin{Proposition}\label{Prop : the main proposition}
Let $\A \in \pDL$ be finite. Then $\A$ embeds into $\boldsymbol{F}_\pDL(\aleph_0)$ if and only if the poset $\A_\ast$ has a free skeleton.
\end{Proposition}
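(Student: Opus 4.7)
The plan is to translate the statement, via Corollary \ref{Cor : fin gen free algebras cover everything}, Proposition \ref{Prop : free algebra : technical prop}, Corollary \ref{Cor : free pDL power of two}, and Proposition \ref{Prop : weak p-morphic image}, into the equivalent claim: $\A_\ast$ has a free skeleton if and only if, for some $n \in \mathbb{Z}^+$, $\A_\ast$ is a surjective weak p-morphic image of $\P(2^n)$.

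For the ``only if'' direction of this reformulation, I would first verify directly that $\P(2^n)$ itself has a free skeleton: its minimum is $\langle 0, 2^n \rangle$, one computes $\max \up \langle x, C \rangle = \{\langle z, \{z\}\rangle : z \in C\}$ for each $\langle x, C \rangle$, and the witnesses $s_{\langle x, C \rangle, Y} := \langle x, Y'\rangle$, where $Y' := \{z : \langle z, \{z\}\rangle \in Y\}$, make the conditions of Definition \ref{def} transparent. Then I would establish a transfer lemma: if $q \colon X \twoheadrightarrow Z$ is a surjective weak p-morphism between finite posets and $X$ has a free skeleton, then so does $Z$. For each $y \in Z$ pick a preimage $\tilde y \in q^{-1}(y)$, taking $\tilde y := \bot_X$ when $y$ is the minimum $\bot_Z := q(\bot_X)$, and set $s^Z_{y, Y} := q(s^X_{\tilde y,\, q^{-1}[Y] \cap \max \up \tilde y})$. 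Using Proposition \ref{Prop : preservation of maximal} one verifies that $\max \up s^Z_{y, Y} = Y$, and conditions (ii), (iii) of Definition \ref{def} for $Z$ descend from the corresponding conditions for $X$.

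For the ``if'' direction, suppose $\A_\ast$ has a free skeleton. I would take $n := |\max \A_\ast| + |\A_\ast|$, labeling the first $|\max \A_\ast|$ coordinates by $\max \A_\ast$ and the remaining ones by $\A_\ast$. For each pair $(y, m)$ with $m \in \max \up y$, define a codeword $T_{y, m} \subseteq \{1, \ldots, n\}$ consisting of the max-coordinate for $m$ together with the coordinates encoding $\up y$; the map $p_0 \colon \max \P(2^n) \to \max \A_\ast$ sends $\langle T, \{T\}\rangle$ to $m_i$ whenever $T \cap \max \A_\ast = \{m_i\}$ (and arbitrarily otherwise), so that $p_0(\langle T_{y, m}, \{T_{y, m}\}\rangle) = m$. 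Setting $C_y := \{T_{y', m'} : y' \geq y,\ m' \in \max \up y'\}$ and $x_y := \langle \varnothing, C_y\rangle$, a direct check gives $y \leq y' \iff x_y \leq x_{y'}$ and $p_0[\max \up x_y] = \max \up y$. Prescribing $p(x_y) := y$ and $p|_{\max \P(2^n)} := p_0$, and extending coherently over the upset $U$ of $\P(2^n)$ generated by $\{x_y : y \in \A_\ast\} \cup \max \P(2^n)$ using the free-skeleton witnesses of $\A_\ast$, produces a surjective weak p-morphism $p \colon U \to \A_\ast$. Proposition \ref{Prop : extension} then extends $p$ to a surjective weak p-morphism $\P(2^n) \to \A_\ast$.

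The main obstacle is the construction in the ``if'' direction: prescribing $p$ only on $\max \P(2^n)$ and applying Proposition \ref{Prop : extension} directly forces the image of $p^+$ to land in $\max \A_\ast \cup \{s_{\bot, Y} : \varnothing \ne Y \subseteq \max \A_\ast\}$, which need not cover $\A_\ast$ even when $\A_\ast$ has a free skeleton. Encoding the full order of $\A_\ast$ into the $C$-coordinates of the representatives $x_y$ is what restores surjectivity, and verifying that the resulting $p$ is a weak p-morphism on $U$ crucially exploits the free-skeleton witnesses of $\A_\ast$ to consistently define $p$ on the intermediate elements of $U$.
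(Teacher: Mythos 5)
Your reduction to the statement about $\P(\mathsf{2}^n)$ and the left-to-right direction are fine, and in fact your route there is more modular than the paper's: the paper verifies the free-skeleton axioms for $X$ directly from a surjection $p \colon \P(\mathsf{2}^n) \to X$ by setting $s_{x,Y} \coloneqq p(\langle z, Z \cap m_Y\rangle)$, whereas you factor this into ``$\P(\mathsf{2}^n)$ has a free skeleton'' plus a general transfer lemma along surjective weak p-morphisms. I checked your transfer lemma and it works (the key points being $q[q^{-1}[Y]\cap \max\up\tilde y]=Y$ via Proposition \ref{Prop : preservation of maximal} and, for condition (\ref{item : def : 3}), that any preimage $u$ of $x$ satisfies $\max\up u \subseteq q^{-1}[Y]\cap\max X$).

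The converse direction, however, has a genuine gap, and it is not merely an omitted verification: the construction as specified cannot work. Your map is required to satisfy $p(x_y)=y$ for \emph{every} $y \in \A_\ast$ and $p\restriction_{\max\P(\mathsf{2}^n)}=p_0$; since all representatives $x_y=\langle\varnothing,C_y\rangle$ sit at the same first coordinate, an element $u=\langle\varnothing,D\rangle$ with $D\subseteq C_y\cap C_{y'}$ lies above $x_y$ and $x_{y'}$ even for incomparable $y,y'$, and then order preservation forces $p(u)\geq y,y'$ while Proposition \ref{Prop : preservation of maximal}(\ref{item : preservation of maximal : 1}) forces $\max\up p(u)=p_0[\max\up u]$. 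These constraints can be jointly unsatisfiable. Concretely, let $X=\{\bot,c,d,a,b,m,m'\}$ with $\bot<c,d$; $c,d<a,b$; $a<m$; $b<m'$; and $c,d$ (resp.\ $a,b$, resp.\ $m,m'$) pairwise incomparable, so that $\max\up a=\{m\}$, $\max\up b=\{m'\}$, and $\max\up x=\{m,m'\}$ for $x\in\{\bot,c,d\}$. This poset has a free skeleton (take $s_{x,\{m\}}=a$, $s_{x,\{m'\}}=b$, $s_{x,\{m,m'\}}=x$ for $x\in\{\bot,c,d\}$, and $s_{a,\{m\}}=a$, $s_{b,\{m'\}}=b$, $s_{m,\{m\}}=m$, $s_{m',\{m'\}}=m'$; conditions (\ref{item : def : 1})--(\ref{item : def : 3}) are immediate). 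Now $u\coloneqq\langle\varnothing,\{T_{a,m},T_{b,m'}\}\rangle$ satisfies $x_c,x_d\leq u$ because $a,b\geq c,d$, so any extension must have $p(u)\geq c$ and $p(u)\geq d$ with $\max\up p(u)=\{m,m'\}$; but the common upper bounds of $c$ and $d$ in $X$ are exactly $a,b,m,m'$, none of which has $\{m,m'\}$ as its set of maximal elements above it. The free-skeleton axioms give witnesses $s_{y,E}$ only relative to a \emph{single} base point $y$, and there is no axiom producing an upper bound of two incomparable elements with a prescribed trace on $\max X$. This is precisely the difficulty the paper's proof is organized around: the induction on $\lvert\max X\rvert$ and the pairwise-disjoint upsets $W_i,V_j$ of Claims \ref{Claim : upsets} and \ref{Claim : disjoint} ensure that preimages of incomparable elements of $X$ never acquire a common element of $\P(\mathsf{2}^k)$ above both, so the conflicting constraints above never arise. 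Your construction would need a comparable separation mechanism; as written, it fails.
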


 For suppose that Proposition \ref{Prop : the main proposition} holds. Then Theorem \ref{Thm : MAIN} can be derived as follows.

\begin{proof}
Recall from Theorem \ref{Thm : PDL locally finite} that $\pDL$ is a locally finite variety. Moreover, $\boldsymbol{F}_\pDL(\aleph_0) \in \pDL$ because $\pDL$ contains free algebras. Therefore, we can apply Theorem \ref{Thm : class of models of universal theory : general}, obtaining that the class of models of $\mathsf{Th}_\forall(\boldsymbol{F}_\pDL(\aleph_0))$ is
\[
\{ \A \in \pDL :  \text{each finite subalgebra of $\A$ embeds into $\boldsymbol{F}_\pDL(\aleph_0)$}\}.
\]
By Proposition \ref{Prop : the main proposition} this class coincides with the one in the statement of Theorem \ref{Thm : MAIN}.
\end{proof}

\begin{Remark}
A member $\A$ of a variety $\mathsf{V}$ is called \emph{exact} in $\mathsf{V}$ if it is isomorphic to a finitely generated subalgebra of $\boldsymbol{F}_{\mathsf{V}}(\aleph_0)$. Exact algebras serve as a fundamental tool in the study of exact unification \cite{CM15b}. As $\pDL$ is locally finite, the exact algebras in $\pDL$ are the finite pseudocomplemented distributive lattices that embed into $\boldsymbol{F}_\pDL(\aleph_0)$. It then follows from Proposition~\ref{Prop : the main proposition} that $\A \in \pDL$ is exact in $\pDL$ if and only if it is finite and $\A_\ast$ has a free skeleton.
\qed
\end{Remark}

The rest of this section is devoted to proving Proposition \ref{Prop : the main proposition}. To this end, it is convenient to fix some notation. Let $n \in \mathbb{Z}^+$. In view of Remark \ref{Rem : finite free extensions} and Corollary \ref{Cor : free pDL power of two}, we will identify the dual of the free algebra $\boldsymbol{F}_\pDL(n)$ with the finite poset $\P(\mathsf{2}^n)$ with universe
\[
\{ \langle x, C \rangle \in \mathsf{2}^n \times \wp(\mathsf{2}^n) : \varnothing \ne C \subseteq \up x \},
\]
ordered as follows:
\[
\langle x, C \rangle \leq \langle y, D \rangle \iff x \leq y \ \text{and} \ C \supseteq D.
\]

We begin with the following observations.

\begin{Proposition}\label{Prop : about Fn}
Let $n \geq 2$ and let $0$ be the minimum of $\mathsf{2}^n$. The following conditions hold:
\benroman
\item\label{item : basic structure of P(X) : 1} the minimum of $\P(\mathsf{2}^n)$ is $\langle 0, \mathsf{2}^n\rangle$;
\item\label{item : basic structure of P(X) : 2} the set of atoms of $\P(\mathsf{2}^n)$ is $\{ \langle 0, \mathsf{2}^n - \{ x \} \rangle : x \in \mathsf{2}^n\}$;
\item\label{item : basic structure of P(X) : 3} the set of maximal elements of $\P(\mathsf{2}^n)$ is $\{ \langle x, \{ x \} \rangle : x \in \mathsf{2}^n \}$.\ Furthermore, no atom is maximal.
\eroman
\end{Proposition}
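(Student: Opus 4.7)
The plan is to unfold the order on $\P(\mathsf{2}^n)$ and verify each claim by direct case analysis. Throughout, I use that $\mathsf{2}^n$ is bounded with $|{\mathsf{2}^n}| = 2^n$, so $n \geq 2$ gives $|{\mathsf{2}^n}| \geq 4$, and that $\up x \subsetneq \mathsf{2}^n$ whenever $x > 0$.

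For (i), I would first check that $\langle 0, \mathsf{2}^n \rangle$ belongs to $\P(\mathsf{2}^n)$, which amounts to $\mathsf{2}^n \subseteq \up 0$, true because $0$ is the minimum. Then, for any $\langle y, D \rangle \in \P(\mathsf{2}^n)$, both $0 \leq y$ and $\mathsf{2}^n \supseteq D$ hold trivially, giving $\langle 0, \mathsf{2}^n \rangle \leq \langle y, D\rangle$.

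For (ii), I would characterize the elements that cover $\langle 0, \mathsf{2}^n \rangle$. Suppose $\langle x, C \rangle$ is an atom. I would rule out $x > 0$ by observing that in this case $C \subseteq \up x \subsetneq \mathsf{2}^n$, so $\langle 0, C \rangle$ lies strictly between $\langle 0, \mathsf{2}^n \rangle$ and $\langle x, C \rangle$, contradicting the covering. Thus $x = 0$, and then the interval from $\langle 0, \mathsf{2}^n \rangle$ up to $\langle 0, C \rangle$ is order-isomorphic to the reverse-inclusion chain of subsets of $\mathsf{2}^n$ between $C$ and $\mathsf{2}^n$; this identifies atoms with the case $|\mathsf{2}^n - C| = 1$, i.e.\ $C = \mathsf{2}^n - \{x\}$. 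Conversely, a direct computation shows that any $\langle y, D \rangle$ with $\langle 0, \mathsf{2}^n \rangle < \langle y, D \rangle \leq \langle 0, \mathsf{2}^n - \{x\}\rangle$ forces $y = 0$ and $\mathsf{2}^n - \{x\} \subseteq D \subsetneq \mathsf{2}^n$, hence $D = \mathsf{2}^n - \{x\}$.

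For (iii), I would first verify that each $\langle x, \{x\} \rangle$ is maximal: if $\langle x, \{x\}\rangle \leq \langle y, D\rangle$, then $\{x\} \supseteq D$ forces $D = \{x\}$, and $D \subseteq \up y$ gives $y \leq x$, which combined with $x \leq y$ yields $\langle y, D\rangle = \langle x, \{x\}\rangle$. Conversely, given an arbitrary $\langle x, C \rangle \in \P(\mathsf{2}^n)$, I would produce a maximal element above it of the desired form: if $x \in C$, then $\langle x, C \rangle \leq \langle x, \{x\}\rangle$; otherwise, pick any $y \in C$, note that $x < y$ (since $y \in \up x \setminus \{x\}$), and observe $\{y\} \subseteq C$, so $\langle x, C\rangle \leq \langle y, \{y\}\rangle$. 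Finally, I would separate the atoms from the maximal elements via cardinality: atoms have second coordinate of size $2^n - 1 \geq 3$, whereas maximal elements have second coordinate of size $1$; this is precisely where the hypothesis $n \geq 2$ is used.

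None of the individual steps present a real obstacle; the only point that requires attention is the use of $n \geq 2$ to separate atoms from maximal elements, and the handling of the $x \notin C$ subcase in (iii), for which picking an element of $C$ to serve as the second coordinate is the key move.
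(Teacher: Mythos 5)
Your proof is correct and follows essentially the same route as the paper: a direct case analysis unfolding the componentwise order on $\P(\mathsf{2}^n)$, with the same use of $n \geq 2$ to separate atoms (second coordinate of size $2^n-1$) from maximal elements (second coordinate a singleton). The only blemish is calling the poset of subsets between $C$ and $\mathsf{2}^n$ a ``chain''---it is a Boolean interval---but the covering criterion $\lvert \mathsf{2}^n - C \rvert = 1$ you extract from it is correct.
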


\begin{proof}
(\ref{item : basic structure of P(X) : 1}): Immediate from the definition of $\P(\mathsf{2}^n)$. 

(\ref{item : basic structure of P(X) : 2}): We begin by proving that $\langle 0, \mathsf{2}^n - \{ x \} \rangle$ is an atom for every $x \in \mathsf{2}^n$. Consider $x\in \mathsf{2}^n$. Since $n \geq 2$ we have $\mathsf{2}^n - \{ x \} \ne \varnothing$.\ Together with $\mathsf{2}^n - \{ x \} \subseteq \mathsf{2}^n = \up 0$, this yields that $\langle 0, \mathsf{2}^n - \{ x \} \rangle$ is an element of $\P(\mathsf{2}^n)$. Moreover, $\langle 0, \mathsf{2}^n\rangle < \langle 0, \mathsf{2}^n - \{ x \} \rangle$ by the definition of $\P(\mathsf{2}^n)$. Since $\langle 0, \mathsf{2}^n\rangle$ is the minimum of $\P(\mathsf{2}^n)$ by (\ref{item : basic structure of P(X) : 1}), it only remains to prove that there exists no element $\langle z, Z \rangle \in \P(\mathsf{2}^n)$ such that $\langle 0, \mathsf{2}^n\rangle < \langle z, Z \rangle < \langle 0, \mathsf{2}^n - \{ x \} \rangle$. Suppose the contrary. By the definition of the order relation of $\P(\mathsf{2}^n)$ this yields
\[
0 \leq z \leq 0 \qquad \text{and} \qquad \mathsf{2}^n - \{ x \} \subseteq Z \subseteq \mathsf{2}^n.
\]
Hence, $z = 0$ and $Z \in \{ \mathsf{2}^n - \{ x \}, \mathsf{2}^n \}$, a contradiction with the assumption that $\langle z, Z \rangle$ differs from $\langle 0, \mathsf{2}^n\rangle$ and $\langle 0, \mathsf{2}^n - \{ x \} \rangle$. This concludes the proof that $\langle 0, \mathsf{2}^n - \{ x \} \rangle$ is an atom for each $x \in \mathsf{2}^n$.

Therefore, it only remains to prove that every atom is of this form. To this end, consider an atom $\langle x,C \rangle$. Observe that $C \ne \mathsf{2}^n$, otherwise from $\mathsf{2}^n = C \subseteq \up x$ it follows that $x = 0$ and, therefore, $\langle x,C \rangle = \langle 0,\mathsf{2}^n \rangle$, where the latter is the minimum $\P(\mathsf{2}^n)$ by (\ref{item : basic structure of P(X) : 1}). Then there exists $x \in \mathsf{2}^n-C$. So, $\langle 0, \mathsf{2}^n - \{ x \} \rangle$ is an element of $\P(\mathsf{2}^n)$ that is not the minimum and is below $\langle x,C \rangle$. Since $\langle x,C \rangle$ is an atom, we conclude that $\langle x,C \rangle = \langle 0, \mathsf{2}^n - \{ x \} \rangle$ as desired.

(\ref{item : basic structure of P(X) : 3}): We begin by showing that the set of maximal elements of $\P(\mathsf{2}^n)$ is $\{ \langle x, \{ x \} \rangle : x \in \mathsf{2}^n \}$. Clearly, $\langle x, \{ x \} \rangle \in \P(\mathsf{2}^n)$ for each $x \in \mathsf{2}^n$.\ Furthermore, if $x, y \in \mathsf{2}^n$ are distinct, then the elements $\langle x, \{ x \} \rangle$ and $\langle y, \{ y \} \rangle$ are incomparable because $x \notin \{ y\}$ and $y \notin \{ x \}$. Therefore, it suffices to show that every element of  $\P(\mathsf{2}^n)$ is below an element of the form $\langle x, \{ x \} \rangle$ for some $x \in \mathsf{2}^n$. Let $\langle x,C \rangle \in \P(\mathsf{2}^n)$. Since $C \ne \varnothing$, there exists $y \in C$. Moreover, $y \in C \subseteq \up x$ because $\langle x, C \rangle \in \P(\mathsf{2}^n)$. Hence, we conclude that $\langle x,C \rangle \leq \langle y, \{y\} \rangle$.

It only remains to show that no atom is maximal. By  (\ref{item : basic structure of P(X) : 2}) atoms are of the form $\langle 0, \mathsf{2}^n - \{ x \} \rangle$ with $x \in \mathsf{2}^n$. Then let $x \in \mathsf{2}^n$. Observe that $\mathsf{2}^n-\{x\} \ne \{y\}$ for every $y \in \mathsf{2}^n$ because $n \geq 2$ by assumption. Together with the above description of maximal elements, this yields that $\langle 0, \mathsf{2}^n - \{ x \} \rangle$ is not maximal.
\end{proof}

\begin{Proposition}\label{Prop : natural surjection : duals of free algebras}
For all $n, m \in \mathbb{Z}^+$ such that $n \leq m$ there exists a surjective weak p-morphism $p \colon \P(\mathsf{2}^m) \to \P(\mathsf{2}^n)$.
\end{Proposition}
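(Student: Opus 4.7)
The plan is to define $p$ explicitly using the natural coordinate projection $\pi \colon \mathsf{2}^m \to \mathsf{2}^n$ onto the first $n$ factors. This $\pi$ is a surjective, order-preserving map of finite posets, and I set
\[
p(\langle x, C \rangle) \coloneqq \langle \pi(x), \pi[C] \rangle.
\]
Well-definedness is immediate: since $\pi$ is order-preserving, $C \subseteq \up x$ implies $\pi[C] \subseteq \up \pi(x)$, and $\pi[C]$ is nonempty because $C$ is. Order preservation similarly follows from the definition of the order on $\P(-)$, using that $C \supseteq D$ entails $\pi[C] \supseteq \pi[D]$.

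For surjectivity, given $\langle z, E \rangle \in \P(\mathsf{2}^n)$, let $\sigma \colon \mathsf{2}^n \to \mathsf{2}^m$ be the order-preserving section of $\pi$ that appends zeros in the last $m - n$ coordinates; then $\langle \sigma(z), \sigma[E] \rangle$ belongs to $\P(\mathsf{2}^m)$ and maps via $p$ to $\langle z, E \rangle$. For the weak p-morphism condition, the crucial input is Proposition \ref{Prop : about Fn}(\ref{item : basic structure of P(X) : 3}), which identifies the maximal elements of each $\P(\mathsf{2}^k)$ as the singleton pairs $\langle y, \{y\} \rangle$. Assuming $p(\langle x, C \rangle) \leq \langle y, \{y\} \rangle$ yields $y \in \pi[C]$, so I can choose some $c \in C$ with $\pi(c) = y$. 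The pair $\langle c, \{c\} \rangle$ is then maximal in $\P(\mathsf{2}^m)$, lies above $\langle x, C \rangle$ (because $c \in C \subseteq \up x$ and $\{c\} \subseteq C$), and satisfies $p(\langle c, \{c\} \rangle) = \langle y, \{y\} \rangle$, as required.

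An alternative, more abstract argument is also available: Proposition \ref{Prop : free algebra : technical prop}(\ref{item : 2 : free algebra : technical prop}) yields an embedding of $\boldsymbol{F}_\pDL(n)$ into $\boldsymbol{F}_\pDL(m)$, and since both algebras are finite by Theorem \ref{Thm : PDL locally finite}, Proposition \ref{Prop : weak p-morphic image} immediately produces the desired surjective weak p-morphism. I do not foresee any serious obstacle, as the concrete map $p$ is essentially forced by the functoriality of the construction $\P$ applied to $\pi$, and every verification reduces to a short calculation using the explicit description of maximal elements.
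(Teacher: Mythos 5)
Your proposal is correct, and your primary argument takes a genuinely different route from the paper's. The paper's proof is exactly your ``alternative'': it invokes Proposition \ref{Prop : free algebra : technical prop}(\ref{item : 2 : free algebra : technical prop}) to get an embedding $\boldsymbol{F}_\pDL(n) \hookrightarrow \boldsymbol{F}_\pDL(m)$ and then applies Proposition \ref{Prop : weak p-morphic image}, so it is a two-line consequence of the duality machinery but gives no explicit description of the map. Your main construction, $p(\langle x, C\rangle) = \langle \pi(x), \pi[C]\rangle$ for the coordinate projection $\pi$, is purely combinatorial: it needs neither Corollary \ref{Cor : free pDL power of two} nor Proposition \ref{Prop : weak p-morphic image}, and all your verifications check out --- in particular, for the weak p-morphism condition, $y \in \pi[C]$ gives $c \in C$ with $\pi(c) = y$, and $\langle c, \{c\}\rangle$ is a maximal element of $\P(\mathsf{2}^m)$ lying in ${\uparrow}\langle x, C\rangle$, hence belongs to $\max {\uparrow}\langle x, C\rangle$ as the definition requires. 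What your route buys is a concrete, functorial formula (essentially $\P$ applied to $\pi$) that is independent of the algebraic side of the duality; what the paper's route buys is brevity and uniformity with the rest of Section 6. One small caveat: you cite Proposition \ref{Prop : about Fn}(\ref{item : basic structure of P(X) : 3}) for the description of $\max \P(\mathsf{2}^k)$, but that proposition is stated only for $k \geq 2$, whereas the claim allows $n = 1$ (and $n = m = 1$); the description of the maximal elements as $\{\langle v, \{v\}\rangle : v \in \mathsf{2}^k\}$ does hold for $k = 1$ by the same argument, but you should either check this case directly or note that the relevant part of Proposition \ref{Prop : about Fn} does not use $k \geq 2$.
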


\begin{proof}
By Proposition \ref{Prop : free algebra : technical prop}(\ref{item : 2 : free algebra : technical prop}) there exists an embedding $h \colon \boldsymbol{F}_\pDL(n) \to \boldsymbol{F}_\pDL(m)$. Therefore, we can apply Proposition \ref{Prop : weak p-morphic image} to obtain the desired surjective weak p-morphism $p \colon \P(\mathsf{2}^m) \to \P(\mathsf{2}^n)$.
\end{proof}

We are now ready to prove Proposition \ref{Prop : the main proposition}. In view of Corollary \ref{Cor : fin gen free algebras cover everything}, a finite pseudocomplemented distributive lattice $\A$ embeds into $\boldsymbol{F}_\pDL(\aleph_0)$ if and only if it embeds into $\boldsymbol{F}_\pDL(n)$ for some $n \in \mathbb{Z}^+$. Moreover, the latter is equivalent to the demand that $\A_\ast$ is a weak p-morphic image of $\P(\mathsf{2}^n)$ for some $n \in \mathbb{Z}^+$ by Proposition \ref{Prop : weak p-morphic image}. As a consequence, Proposition \ref{Prop : the main proposition} follows immediately from the next observation.

\begin{Proposition}
Let $X$ be a finite poset. Then $X$ is a weak p-morphic image of $\P(\mathsf{2}^n)$ for some $n \in \mathbb{Z}^+$ if and only if $X$ has a free skeleton.
\end{Proposition}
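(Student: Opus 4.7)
The statement is a biconditional and I treat the two directions separately.

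\textbf{For $(\Leftarrow)$}, the plan is to reduce to two claims: $(a)$ that $\P(\mathsf{2}^n)$ itself has a free skeleton, and $(b)$ that this property is preserved under surjective weak p-morphisms between finite posets. For $(a)$, using the description in Proposition \ref{Prop : about Fn}, the minimum is $\langle 0, \mathsf{2}^n\rangle$ and $\max \up \langle x, C\rangle = \{\langle y, \{y\}\rangle : y \in C\}$, so the natural witnesses $s_{\langle x, C\rangle, Y} := \langle x, C'\rangle$ (where $C' := \{y \in C : \langle y, \{y\}\rangle \in Y\}$) satisfy the three conditions of Definition \ref{def} by direct verification from the explicit order on $\P(\mathsf{2}^n)$. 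For $(b)$, given a surjective weak p-morphism $p : X' \twoheadrightarrow X$ and witnesses $\{s^{X'}_{x, Y}\}$ for a free skeleton on $X'$, I would fix a preimage $\tilde{y} \in p^{-1}(y)$ for each $y \in X$ (with $\widetilde{\bot_X} := \bot_{X'}$) and define $s^X_{y, Z} := p(s^{X'}_{\tilde{y}, Z'})$ for $Z' := p^{-1}[Z] \cap \max \up \tilde{y}$. The three axioms transfer from $X'$ to $X$, with Proposition \ref{Prop : preservation of maximal}(\ref{item : preservation of maximal : 1}) providing the key fact that $p[\max \up \tilde{y}] = \max \up y$, so that $p[Z'] = Z$.

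\textbf{For $(\Rightarrow)$}, the main task, the plan is to construct a surjective weak p-morphism $p : \P(\mathsf{2}^n) \twoheadrightarrow X$ for a suitable $n$. I would choose $n$ large, fix a surjection $\tau : \mathsf{2}^n \twoheadrightarrow \max X$ and an order-preserving map $f : \mathsf{2}^n \to X$ satisfying $f(0) = \bot_X$ and $\tau[\up x] \subseteq \max \up f(x)$ for every $x \in \mathsf{2}^n$, and set $p(\langle x, C\rangle) := s_{f(x), \tau[C]}$ using the free skeleton witnesses of $X$. The weak p-morphism condition follows from axiom (i) of Definition \ref{def}, which gives $\max \up p(\langle x, C\rangle) = \tau[C]$; each maximal element above $p(\langle x, C\rangle)$ is of the form $\tau(y)$ for some $y \in C$, matching $p(\langle y, \{y\}\rangle) \in \max \up \langle x, C\rangle$. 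Order-preservation in the second coordinate follows from axiom (ii) via $D \subseteq C \Rightarrow \tau[D] \subseteq \tau[C]$, while order-preservation in the first coordinate is handled using axiom (iii)---which forces $s_{\bot, Y}$ to be the minimum of $\{z \in X : \max \up z \subseteq Y\}$---combined with the monotonicity of $f$ and a coordinated choice of the witnesses $s_{z, Y}$ ensuring monotonicity in the first argument.

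\textbf{The main obstacle} is ensuring the surjectivity of $p$ when $X$ lacks a maximum, since no monotone surjection $\mathsf{2}^n \to X$ can exist in that case and the direct image of $f$ alone misses non-skeleton elements of $X$. I would overcome this by exploiting the freedom that axiom (i) affords in the choice of witnesses: for every $z \in X$, the choice $s_{z, \max \up z} := z$ is permissible (as $z \in \up z$ and $\max \up z$ is trivially a subset of itself), and so $z$ appears in the image of $p$ as soon as some $x \in \mathsf{2}^n$ satisfies both $f(x) = z$ and $\tau[\up x] = \max \up z$. Taking $n$ sufficiently large (for instance, $n$ exceeding $|X|$) provides enough room in the Boolean lattice $\mathsf{2}^n$ to build $f$ and $\tau$ meeting all the compatibility conditions, by a layer-by-layer assignment on $\mathsf{2}^n$ that respects its order and Definition \ref{def}.
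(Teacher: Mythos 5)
Your argument for the direction ``weak p-morphic image $\Rightarrow$ free skeleton'' (which you label $(\Leftarrow)$, reversing the orientation of the statement) is essentially correct and is a clean modularization of what the paper does in one step: your witnesses $\langle x, C'\rangle$ on $\P(\mathsf{2}^n)$ and the transfer along a surjective weak p-morphism via $Z' = p^{-1}[Z]\cap\max{\uparrow}\tilde{y}$ reproduce, after composition, the paper's sets $m_Y$ and the witnesses $p(\langle z, Z\cap m_Y\rangle)$. That half is fine.

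The converse direction, however, has a fatal gap, and it is exactly the point where the paper has to work hard. Your surjectivity mechanism requires, for each $z\in X$, some $x\in\mathsf{2}^n$ with $f(x)=z$ and $\tau[{\uparrow}x]=\max{\uparrow}z$. But $\mathsf{2}^n$ has a top element $1$, so $1\in{\uparrow}x$ for every $x$, whence $\tau(1)\in\tau[{\uparrow}x]=\max{\uparrow}z$, i.e., $z\leq\tau(1)$. Applied to every $z\in X$ this forces $X$ to have a maximum, so your construction can only be surjective when $\lvert\max X\rvert=1$; already for two incomparable maximal elements $w_1,w_2$ (take $z=w_1$ and $z=w_2$) no admissible $\tau(1)$ exists. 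More generally, the compatibility condition $\tau[{\uparrow}x]\subseteq\max{\uparrow}f(x)$ confines the image of $f$ to ${\downarrow}\tau(1)$, so a single ``global'' formula $p(\langle x,C\rangle)=s_{f(x),\tau[C]}$ cannot reach the whole of $X$. A second, independent gap is the appeal to ``a coordinated choice of the witnesses ensuring monotonicity in the first argument'': Definition \ref{def} gives monotonicity of $s_{x,Y}$ in $Y$ (condition (ii)) and a comparison with $s_{\bot,Y}$ (condition (iii)), but nothing relating $s_{x,Y}$ and $s_{y,Y}$ for general $x\leq y$, and you neither prove that such a coordinated choice exists nor indicate how. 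The paper circumvents both problems by inducting on $\lvert\max X\rvert$: it covers $X$ by the upsets ${\uparrow}s_{\bot,M_i}$ (each with fewer maximal elements, handled by the inductive hypothesis) together with the bounded downsets ${\downarrow}v_j$ (handled by a separate base case), realizes these partial maps on pairwise disjoint upsets of a single $\P(\mathsf{2}^k)$, and then extends the resulting partial weak p-morphism to all of $\P(\mathsf{2}^k)$ using Proposition \ref{Prop : extension}. Some device of this kind---decomposing $X$ according to which maximal elements lie above a point---appears unavoidable, and your ``layer-by-layer assignment'' does not supply it.
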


\begin{proof}
We begin by proving the implication from left to right. Let $p \colon \P(\mathsf{2}^n) \to X$ be a surjective weak p-morphism with $n \in \mathbb{Z}^+$.\ First, we prove that $X$ has a minimum. Recall from Proposition \ref{Prop : about Fn}(\ref{item : basic structure of P(X) : 1}) that $\langle 0, \mathsf{2}^n \rangle$ is the minimum of $\P(\mathsf{2}^n)$, where $0$ is the minimum of $\mathsf{2}^n$.\ We will show that $\bot \coloneqq p(\langle 0, \mathsf{2}^n \rangle)$ is the minimum of $X$. To this end, let $x \in X$. Since $p \colon \P(\mathsf{2}^n) \to X$ is surjective, there exists $y \in \P(\mathsf{2}^n)$ such that $p(y) = x$. Moreover, $\langle 0, \mathsf{2}^n \rangle \leq y$ because $\langle 0, \mathsf{2}^n \rangle$ is the minimum of $\P(\mathsf{2}^n)$.\ As $p$ is order preserving, we conclude that $\bot = p(\langle 0, \mathsf{2}^n \rangle) \leq p(y) = x$. Hence, $\bot$ is the minimum of $X$ as desired.

Now, observe that for each $x \in \mathsf{2}^n$ the pair $\langle x, \{ x \} \rangle$ is an element of $\P(\mathsf{2}^n)$. Then for each nonempty $Y \subseteq \max X$ let
\[
m_Y \coloneqq \{ x \in \mathsf{2}^n : p(\langle x, \{ x\} \rangle) \in Y \}.
\]
\begin{Claim}\label{Claim : 5.4}
For all $x \in X$, nonempty $Y \subseteq \max \up x$, and $\langle z, Z \rangle \in \P(\mathsf{2}^n)$,
\[
\text{if }p(\langle z, Z \rangle) = x, \text{ then }\langle z, Z \cap m_Y \rangle \in \P(\mathsf{2}^n).
\]
\end{Claim}

\begin{proof}[Proof of the Claim]
Suppose that $p(\langle z, Z \rangle) = x$. From $\langle z, Z \rangle \in \P(\mathsf{2}^n)$ it follows that $Z \subseteq \up z$. Then $Z \cap m_Y \subseteq \up z$ as well. Therefore, it only remains to prove that $Z \cap m_Y \ne \varnothing$. Since $Y \ne  \varnothing$, there exists $y \in Y \subseteq \max \up x$. By Proposition~\ref{Prop : preservation of maximal}(\ref{item : preservation of maximal : 1}) we have
\[
y \in \max \up x = \max \up p(\langle z, Z \rangle) = p[\max \up \langle z, Z \rangle].
\]
Together with Proposition \ref{Prop : about Fn}(\ref{item : basic structure of P(X) : 3}), this implies that there exists $v \in \mathsf{2}^n$ such that $\langle z, Z \rangle \leq \langle v, \{ v \} \rangle$ and $p(\langle v, \{ v \} \rangle) = y$.\ From $\langle z, Z \rangle \leq \langle v, \{ v \} \rangle$ it follows that $v \in Z$ and from $p(\langle v, \{ v \} \rangle) = y$ that $v \in m_Y$. Thus, $v \in Z \cap m_Y$ and, therefore, $Z \cap m_Y \ne \varnothing$.
\end{proof}

Recall that the minimum $\bot$ of $X$ coincides with $p(\langle 0, \mathsf{2}^n\rangle)$. Therefore,  Claim \ref{Claim : 5.4} guarantees that for each nonempty $Y \subseteq \max X$ we have $\langle 0, m_Y \rangle = \langle 0, \mathsf{2}^n \cap m_Y \rangle\in \P(\mathsf{2}^n)$. Then we define $s_{\bot, Y} \coloneqq p(\langle 0, m_Y\rangle)$. Furthermore, since $p$ is surjective, for every $x \in X - \{ \bot \}$ there exists $\langle z, Z \rangle \in \P(\mathsf{2}^n)$ such that $p(\langle z, Z\rangle) = x$. By the Claim we have $\langle z, Z \cap m_Y \rangle\in \P(\mathsf{2}^n)$ for every nonempty $Y \subseteq \max \up x$. Then for every nonempty $Y \subseteq \max \up x$ we define $s_{x, Y} \coloneqq p(\langle z, Z \cap m_Y\rangle)$.

We will prove that the elements $s_{\bot, Y}$ and $s_{x, Y}$ defined in this way witness the fact that $X$ has a free skeleton. We begin by proving condition (\ref{item : def : 1}) of Definition \ref{def}. Let $x \in X$ and let $Y \subseteq \max \up x$ be nonempty. We need to prove that $s_{x, Y} \in \up x$ and $Y = \max \up s_{x, Y}$. First, by the definition of $s_{x, Y}$ there exists $\{z\} \cup Z \subseteq \mathsf{2}^n$ such that $s_{x, Y} = p(\langle z, Z \cap m_Y\rangle)$ and $x = p(\langle z, Z \rangle)$.  Therefore, from $\langle z, Z \rangle \leq \langle z, Z \cap m_Y\rangle$ and the fact that $p$ is order preserving it follows that $x = p(\langle z, Z \rangle) \leq p(\langle z, Z \cap m_Y\rangle) = s_{x, Y}$. Then we prove that $Y = \max \up s_{x, Y}$. 
 By Proposition~\ref{Prop : preservation of maximal}(\ref{item : preservation of maximal : 1}) we have
\[
\max \up s_{x, Y} = \max \up p(\langle z, Z \cap m_Y\rangle) = p[\max \up \langle z, Z \cap m_Y\rangle].
\]
Moreover, by Proposition \ref{Prop : about Fn}(\ref{item : basic structure of P(X) : 3}) and the definition of the order relation of $\P(\mathsf{2}^n)$ we have
\[
\max \up \langle z, Z \cap m_Y\rangle = \{ \langle v, \{ v \} \rangle : v \in \mathsf{2}^n \text{ and }v \in Z \cap m_Y \}.
\]
In view of the two displays above, it only remains to show that 
\[
Y = p[\{ \langle v, \{ v \} \rangle : v \in \mathsf{2}^n \text{ and }v \in Z \cap m_Y \}].
\]

To prove the inclusion from left to right, consider $y \in Y \subseteq \max \up x$. Since $x = p(\langle z, Z \rangle)$, we can apply Proposition~\ref{Prop : preservation of maximal}(\ref{item : preservation of maximal : 1}), obtaining $y \in p[\max \up \langle z, Z \rangle]$. By Proposition \ref{Prop : about Fn}(\ref{item : basic structure of P(X) : 3}) there exists $v \in \mathsf{2}^n$ such that $\langle z, Z \rangle \leq \langle v, \{ v \} \rangle$ and $p(\langle v, \{ v \} \rangle) = y$. From the definition of the order relation of $\P(\mathsf{2}^n)$ and of $m_Y$ it follows that 
$v \in Z \cap m_Y$.
Together with $p(\langle v, \{ v \} \rangle) = y$, this yields that $y$ belongs to the right hand side of the above display. Then we prove the inclusion from right to left. Let $y$ be an element of the right hand side of the above display. Then there exists $v \in \mathsf{2}^n$ such that 
$v \in Z \cap m_Y$
and $p(\langle v, \{ v \} \rangle) = y$. Since $v \in m_Y$, we conclude that $y \in Y$ as desired. This concludes the proof of condition (\ref{item : def : 1}) of Definition \ref{def}.

Then we turn our attention to proving condition (\ref{item : def : 2}).\ Let $x \in X$ and consider a pair of nonempty sets $Y_1, Y_2 \subseteq \max \up x$ such that $Y_1 \subseteq Y_2$. Then $s_{x, Y_1} = p(\langle z, Z \cap m_{Y_1}\rangle)$ and $s_{x, Y_2} = p(\langle z, Z \cap m_{Y_2}\rangle)$ for some $\{z\} \cup Z \subseteq \mathsf{2}^n$. Since $Y_1 \subseteq Y_2$, the definition of $m_{Y_1}$ and $m_{Y_2}$ guarantees that $m_{Y_1} \subseteq m_{Y_2}$. Therefore, $\langle z, Z \cap m_{Y_2}\rangle \leq \langle z, Z \cap m_{Y_1}\rangle$ by the definition of the order relation of $\P(\mathsf{2}^n)$. As $p$ is order preserving, we conclude that $s_{x, Y_2} = p(\langle z, Z \cap m_{Y_2}\rangle) \leq p(\langle z, Z \cap m_{Y_1}\rangle) = s_{x, Y_1}$.

It only remains to prove condition (\ref{item : def : 3}). Consider $x \in X$ and a nonempty set $Y \subseteq \max X$ such that $\max \up x \subseteq Y$. Since $p$ is surjective, there exists $\langle y, Z \rangle \in \P(\mathsf{2}^n)$ such that $x = p(\langle y, Z \rangle)$. We will prove that $Z \subseteq m_Y$. To this end, consider $z \in Z$. From $\langle y, Z \rangle \in \P(\mathsf{2}^n)$ and $z \in Z$ it follows that $y \leq z$.\ Together with $z \in Z$, this implies $\langle y, Z \rangle \leq \langle z, \{ z \} \rangle$.\ Recall from Proposition \ref{Prop : about Fn}(\ref{item : basic structure of P(X) : 3}) that $\langle z, \{ z \} \rangle \in \max \up \langle y, Z \rangle$. Therefore, we can apply Proposition~\ref{Prop : preservation of maximal}(\ref{item : preservation of maximal : 1}), obtaining that $p(\langle z, \{ z \} \rangle) \in \max \up p(\langle y, Z \rangle) = \max \up x$. As $\max \up x \subseteq Y$ by assumption, we have $p(\langle z, \{ z \} \rangle) \in Y$. By the definition of $m_Y$ we conclude that $z \in m_Y$ as desired. This establishes that $Z \subseteq m_Y$. Together with $0 \leq z$, this yields $\langle 0, m_Y \rangle \leq \langle z, Z \rangle$. As $p$ is order preserving, we obtain $p(\langle 0, m_Y \rangle) = p(\langle z, Z \rangle) = x$. Lastly, recall that $s_{\bot, Y} = p(\langle 0, m_Y \rangle)$ by definition. Hence, $s_{\bot, Y} \leq x$. This concludes the proof that $X$ has a free skeleton.

Then we prove the implication from right to left. Let $X$ be a finite poset with a free skeleton. We will reason by induction on $\lvert \max X \rvert$. Notice that $\lvert \max X \rvert \geq 1$ because $X$ is finite and nonempty by assumption.

\subsection*{Base case} In this case, $\lvert \max X \rvert = 1$. Therefore, $X$ has a maximum $\top$. Furthermore, $X$ has a minimum $\bot$ because it has a free skeleton. We will construct a surjective weak p-morphism $p \colon \P(\mathsf{2}^n) \to X$ for $n = \lvert X \rvert+1$.\ To this end, observe that $n \geq 2$ because $X \ne \varnothing$. By Proposition \ref{Prop : about Fn} the poset $\P(\mathsf{2}^n)$ has a minimum $\bot_n$ and $2^n$ atoms, none of which is maximal. Let $Y$ be the set of atoms of $\P(\mathsf{2}^n)$ and observe that $\lvert X \rvert = n+1 \leq 2^n = \lvert Y \rvert$. Then let $p \colon \P(\mathsf{2}^n) \to X$ be any map satisfying the following requirements:
\benroman
\item\label{item : 1 : map p : base case : hard part} $p(\bot_n) = \bot$;
\item\label{item : 2 : map p : base case : hard part} $p$ restricts to a surjection from $Y$ to $X$;
\item\label{item : 3 : map p : base case : hard part} $p(x) = \top$ for every $x \in \P(\mathsf{2}^n) - (Y \cup \{ \bot_n \})$.
\eroman

First, $p$ is surjective by (\ref{item : 2 : map p : base case : hard part}). Then we prove that $p$ is order preserving. Consider $x, y \in \P(\mathsf{2}^n)$ such that $x < y$. We have two cases: either $x = \bot_n$ or $\bot_n < x$. If $x = \bot_n$, then $p(x) = \bot \leq p(y)$ by (\ref{item : 1 : map p : base case : hard part}). Then we consider the case where $\bot_n < x$. Together with $x < y$, this implies that $y \in \P(\mathsf{2}^n) - (Y \cup \{ \bot_n \})$. Therefore, $p(x) \leq \top = p(y)$ by (\ref{item : 3 : map p : base case : hard part}). Hence, we conclude that $p$ is order preserving.

It only remains to prove that $p$ satisfies the weak p-morphism condition.\ To this end, consider $x \in \P(\mathsf{2}^n)$ and $y \in \max \up p(x)$. Since $\max \up p(x) \subseteq \max X$ and by assumption $\max X = \{ \top \}$, we obtain $y = \top$. Moreover, as $\P(\mathsf{2}^n)$ is finite, has atoms, and none of its atoms is maximal, there exists $z \in \P(\mathsf{2}^n) - (Y \cup \{ \bot_n \})$ such that
 $z \in \max \up x$. By (\ref{item : 3 : map p : base case : hard part}) we conclude that $p(z) = \top = y$. Hence, $p \colon \P(\mathsf{2}^n) \to X$ is a surjective weak p-morphism as desired.

\subsection*{Inductive step} In this case, $\lvert \max X \rvert = n+1$ for some $n \geq 1$. Fix the enumerations
\begin{equation}\label{Eq : the first assumption of the inductive step}
\max X = \{ w_1, \dots, w_{n+1} \} \, \, \text{ and } \, \,
\max \{ x \in X : x \leq w_1, \dots, w_{n+1} \} = \{ v_1, \dots, v_t \}.
\end{equation}
Since $X$ has a free skeleton, it has a minimum. Hence, the second set above is nonempty. For each $i \leq n+1$ let
\[
M_i \coloneqq (\max X) - \{ w_i \}.
\]
Moreover, let $\bot$ be the minimum of $X$. We will prove that
\begin{equation}\label{Eq : the partition of X}
X = {\uparrow}s_{\bot, M_1} \cup \dots \cup {\uparrow}s_{\bot, M_{n+1}} \cup {\downarrow}\{ v_1, \dots, v_t \}.
\end{equation}
Clearly, it suffices to prove the inclusion from left to right. Let $x \in X$. If $\max \up x \subseteq M_i$ for some $i \leq n+1$, then $x \in {\uparrow}s_{\bot, M_i}$ by condition~(\ref{item : def : 3}) of Definition~\ref{def}. Otherwise, $\max \up x = \max X$, and so $x \leq w_1, \dots, w_{n+1}$. Thus, $x \leq v_i$ for some $i \leq t$.

By Proposition \ref{Prop : skeletal upsets} each poset ${\uparrow}s_{\bot, M_i}$ has a free skeleton.\ Furthermore, ${\uparrow}s_{\bot, M_i}$ has $n$ maximal elements by construction (namely, the elements of $M_i$).\ Therefore, we can apply the inductive hypothesis, obtaining a surjective weak p-morphism $p_i \colon \P(\mathsf{2}^{m_i}) \to {\uparrow}s_{\bot, M_i}$ for some $m_i \in \mathbb{Z}^+$. By letting $m \coloneqq \max \{ m_1, \dots, m_{n+1} \}$ and invoking Proposition \ref{Prop : natural surjection : duals of free algebras}, we may assume that the domain of each $p_i$ is $\P(\mathsf{2}^{m})$.  Therefore, from now on we will work with surjective weak p-morphisms $p_i \colon \P(\mathsf{2}^{m}) \to {\uparrow}s_{\bot, M_i}$.

While the following observation on Boolean lattices is an easy exercise, we decided to sketch a proof for the sake of completeness.

\begin{Claim}\label{Claim : upsets}
There exists $k \in \mathbb{Z}^+$ such that $\mathsf{2}^k$ contains distinct elements 
\[
\hat{w}_1, \dots, \hat{w}_{n+1}, \hat{v}_1, \dots, \hat{v}_t
\]
satisfying the following conditions:
\benroman
\item\label{item : upsets : 1} if $x_1,x_2 \in \{\hat{w}_1, \dots, \hat{w}_{n+1}, \hat{v}_1, \dots, \hat{v}_t\}$ are distinct, then ${\uparrow}x_1 \cap {\uparrow} x_2 = \{ 1 \}$, where $1$ is the maximum of $\mathsf{2}^k$;
\item\label{item : upsets : 2} for each $i \leq n+1$ there exists a coatom $a_i \in \mathsf{2}^k$ such that $[\hat{w}_i, a_i] \cong \mathsf{2}^m$;
\item\label{item : upsets : 3} for each $j \leq t$ there exist coatoms $b^j_1, \dots, b^j_{n+1} \in \mathsf{2}^k$ such that $[\hat{v}_j, b^j_1 \land \dots \land b^j_{n+1}] \cong \mathsf{2}^{\lvert {\downarrow}v_j \rvert+1}$.
\eroman
\end{Claim}

\begin{proof}[Proof of the Claim]
Let 
\[
k \coloneqq (n+1)(m+1)+ \sum_{j=1}^t (n+1+\lvert \down v_j\rvert+1)
\]
and identify the elements of $\mathsf{2}^k$ with the functions $f \colon \{1, \dots, k\} \to \{0,1\}$. 
Moreover, consider $i \leq n+1$ and $j \leq t$. Then let $\hat{w}_i, \hat{v}_j, a_i, b^j_1, \dots, b^j_{n+1}$ be the functions from $\{1, \dots, k\}$ to $\{0,1\}$ defined as follows: for each $h \in \{1, \dots, k\}$,
\begin{align*}
\hat{w}_i(h) = 0 & \iff (i-1)(m+1) < h \leq i(m+1);\\
\hat{v}_j(h) = 0 & \iff \sum_{s=1}^{j-1} (n+1+\lvert \down v_s\rvert+1) < h - (n+1)(m+1) \leq \sum_{s=1}^{j} (n+1+\lvert \down v_s\rvert+1);\\
a_i(h) = 0 & \iff h=i(m+1);\\
b^j_{l}(h) = 0 & \iff h = (n+1)(m+1) + \sum_{s=1}^{j} (n+1+\lvert \down v_s\rvert+1) - (l-1).
\end{align*}
It is straightforward to check that conditions (\ref{item : upsets : 1}), (\ref{item : upsets : 2}), and (\ref{item : upsets : 3}) hold.
\end{proof}

Fix $k \in \mathbb{Z}^+$ and $\hat{w}_i,\hat{v}_j, a_i, b^j_1, \dots, b^j_{n+1} \in \mathsf{2}^k$ for all $i \leq n+1$ and $j \leq t$ satisfying conditions~(\ref{item : upsets : 1}),~(\ref{item : upsets : 2}), and~(\ref{item : upsets : 3}) of Claim~\ref{Claim : upsets}. Let $i \leq n+1$ and $j \leq t$. By condition (\ref{item : upsets : 2})  of Claim~\ref{Claim : upsets} we have $\hat{w}_i \leq a_i$, whence $\varnothing \ne [\hat{w}_i, a_i] \subseteq \up \hat{w}_i$. Similarly, by condition (\ref{item : upsets : 3})  of Claim~\ref{Claim : upsets} we have $\hat{v}_j \leq b_1^j, \dots, b_{n+1}^j$, whence $\varnothing \ne  [\hat{v}_j, b^j_1 \land \dots \land b^j_{n+1}] \cup \{ b_1^j, \dots, b_{n+1}^j\} \subseteq \up \hat{v}_j$. Therefore,
\[
\langle \hat{w}_i, [\hat{w}_i, a_i]\rangle,  \langle \hat{v}_j, [\hat{v}_j, b^j_1 \land \dots \land b^j_{n+1}] \cup \{ b_1^j, \dots, b_{n+1}^j\} \rangle \in \P(\mathsf{2}^k).
\]
Then we consider the following upsets of $\P(\mathsf{2}^k)$: 
\[
W_i \coloneqq {\uparrow}\langle \hat{w}_i, [\hat{w}_i, a_i]\rangle \qquad \text{and} \qquad V_j \coloneqq {\uparrow}\langle \hat{v}_j, [\hat{v}_j, b^j_1 \land \dots \land b^j_{n+1}] \cup \{ b_1^j, \dots, b_{n+1}^j\} \rangle.
\]

\begin{Claim}\label{Claim : disjoint}
The upsets $W_1, \dots, W_{n+1}, V_1, \dots, V_{t}$ are pairwise disjoint.
\end{Claim}

\begin{proof}[Proof of the Claim]
Suppose, with a view to contradiction, that there are distinct
\begin{equation}\label{Eq : The sets U 1 and 2}
U_1, U_2 \in \{ W_1, \dots, W_{n+1}, V_1, \dots, V_{t}\}
\end{equation}
and some $\langle x, Z \rangle \in U_1\cap  U_2$.\ By the definition of the $W_i$'s and $V_j$'s and the fact that the elements $\hat{w}_1, \dots, \hat{w}_{n+1}, \hat{v}_1, \dots, \hat{v}_t$ are distinct by assumption there exist distinct 
\[
x_1, x_2 \in \{ \hat{w}_1, \dots, \hat{w}_{n+1}, \hat{v}_1, \dots, \hat{v}_t\}
\]
such that $U_1 = {\uparrow}\langle x_1, Z_1\rangle$ and $U_2 = {\uparrow}\langle x_2, Z_2\rangle$ for some $Z_1, Z_2 \subseteq \mathsf{2}^k$.\ As $\langle x, Z \rangle \in U_1\cap  U_2$, this yields $x_1, x_2 \leq x$. By condition (\ref{item : upsets : 1}) of Claim~\ref{Claim : upsets} we obtain $x = 1$, where $1$ is the maximum of $\mathsf{2}^k$.\ Since $\langle 1, Z \rangle = \langle x, Z \rangle \in \P(\mathsf{2}^k)$, we obtain $\varnothing \ne Z  \subseteq {\uparrow}1 = \{ 1 \}$. Therefore, $Z = \{ 1 \}$.\ Consequently, $\langle 1, \{ 1 \} \rangle = \langle x, Z \rangle \in U_1$. As $U_1 = {\uparrow}\langle x_1, Z_1\rangle$, this yields $1 \in Z_1$. By  (\ref{Eq : The sets U 1 and 2}) we have
\[
{\uparrow}\langle x_1, Z_1\rangle = U_1 \in \{ W_1, \dots, W_{n+1}, V_1, \dots, V_{t}\}.
\]
Therefore, the definition of the $W_i$'s and $V_j$'s guarantees that $Z_1$ is of the form $[\hat{w}_i, a_i]$ or $[\hat{v}_j, b^j_1 \land \dots \land b^j_{n+1}] \cup \{ b_1^j, \dots, b_{n+1}^j\}$ for some $i \leq n+1$ and $j \leq t$. As $a_i, b_1^j, \dots, b_{n+1}^j$ are coatoms of $\mathsf{2}^k$ by conditions (\ref{item : upsets : 2}) and (\ref{item : upsets : 3})  of Claim~\ref{Claim : upsets}, we conclude that $1 \notin Z_1$, a contradiction with $1 \in Z_1$.
\end{proof}

Now, for each $j \leq t$ let
\[
V_j^+ \coloneqq \{\langle x,Z\rangle \in V_j : x \in [\hat{v}_j, b^j_1 \land \dots \land b^j_{n+1}] \text{ and } \{ b_1^j, \dots, b_{n+1}^j \} \subsetneq Z  \}.
\]
From the definition of the order relation of $\P(\mathsf{2}^k)$ it immediately follows that $V_j^+$ is a downset of $V_j$.

\begin{Claim}\label{Claim : isomorphisms}
For all $i \leq n+1$ and $j \leq t$ we have
\[
W_i \cong \P(\mathsf{2}^m) \qquad \text{and} \qquad V_j^+ \cong \P(\mathsf{2}^{\lvert {\downarrow} v_j \rvert+1}).
\]
\end{Claim}

\begin{proof}[Proof of the Claim]
From the definition of $W_i$ it follows that $W_i=\P([\hat{w}_i, a_i])$. Moreover, the definition of $V_j^+$ implies that the map $\hat{f} \colon V_j^+ \to \P([\hat{v}_j, b^j_1 \land \dots \land b^j_{n+1}])$ given by $\hat{f}(\langle x,Z \rangle) = \langle x,Z-\{ b^j_1, \dots, b^j_{n+1}\} \rangle$ is an isomorphism. Since an isomorphism $X \cong Y$ between finite posets induces an isomorphism $\P(X) \cong \P(Y)$, conditions (\ref{item : upsets : 2}) and (\ref{item : upsets : 3}) of Claim~\ref{Claim : upsets} yield
\[
\P([\hat{w}_i, a_i]) \cong \P(\mathsf{2}^m) \qquad \text{and} \qquad \P([\hat{v}_j, b^j_1 \land \dots \land b^j_{n+1}]) \cong \P(\mathsf{2}^{\lvert {\downarrow} v_j \rvert+1}).
\]
Therefore, we conclude that
\[
W_i \cong \P(\mathsf{2}^m) \qquad \text{and} \qquad V_j^+ \cong \P(\mathsf{2}^{\lvert {\downarrow} v_j \rvert+1}).\qedhere
\]
\end{proof}

We will rely on the following observation.

\begin{Claim}\label{Claim : the two maps}
The following conditions hold for all $i \leq n+1$ and $j \leq t$:
\benroman
\item\label{item : claim : 1} there exists a weak p-morphism $\hat{p}_i \colon W_i \to X$ such that ${\uparrow}s_{\bot, M_i} \subseteq \hat{p}_i[W_i]$;
\item\label{item : claim : 2} there exists a weak p-morphism $\hat{q}_j \colon V_j \to X$ such that ${\downarrow}v_j \subseteq \hat{q}_j[V_j]$.
\eroman
\end{Claim}

\begin{proof}[Proof of the Claim]
(\ref{item : claim : 1}): Recall that the map $p_i \colon \P(\mathsf{2}^m) \to {\uparrow}s_{\bot, M_i}$ is a surjective weak p-morphism. In view of Claim \ref{Claim : isomorphisms}, we can identify $\P(\mathsf{2}^m)$ with $W_i$. Therefore, we can view $p_i$ as a surjective weak p-morphism $\hat{p}_i \colon W_i \to {\uparrow}s_{\bot, M_i}$. Lastly, as ${\uparrow}s_{\bot, M_i}$ is an upset of $X$, we may assume that the codomain of $\hat{p}_i$ is $X$. In this way, we obtain a weak p-morphism $\hat{p}_i \colon W_i \to X$ such that ${\uparrow}s_{\bot, M_i} \subseteq \hat{p}_i[W_i]$.

(\ref{item : claim : 2}): Observe that the poset ${\downarrow}v_j$ is finite  and has maximum and minimum (the latter because $X$ has a free skeleton). Therefore, ${\downarrow}v_j$ has also a free skeleton by Proposition \ref{Prop : bounded posets have a free skeleton} and only one maximal element. Consequently, we can apply the base case of the induction to ${\downarrow}v_j$, obtaining a surjective weak p-morphism $q_j \colon \P(\mathsf{2}^{\lvert {\downarrow}v_j \rvert +1 }) \to {\downarrow}v_j$. In view of Claim \ref{Claim : isomorphisms}, we can identify $\P(\mathsf{2}^{\lvert {\downarrow}v_j \rvert +1 })$ with $V_j^+$. Therefore, we can view $q_j$ as a surjective weak p-morphism $q_j \colon V_j^+ \to {\downarrow}v_j$.

We will extend $q_j$ to a weak p-morphism $\hat{q}_j \colon V_j \to X$ such that ${\downarrow}v_j \subseteq \hat{q}_j[V_j]$. To this end, recall that $\max X = \{ w_1, \dots, w_{n+1} \}$. For each $x \in V_j$ let
\[
d(x) \coloneqq
		  \{ w_i : x \leq \langle b_i^j, \{ b_i^j \} \rangle \}
\]
and
\[
e(x) \coloneqq  \begin{cases}
		  d(x)  & \text{ if } \max {\uparrow}x \subseteq \{ \langle b_1^j, \{ b_1^j \} \rangle, \dots, \langle b_{n+1}^j, \{ b_{n+1}^j \} \rangle \};\\
		d(x) \cup \{ w_1 \} & \text{ otherwise}.
		\end{cases}
\]
Notice that for each $x \in V_j$ the set $e(x)$ is a nonempty subset of $\max X = \{ w_1, \dots, w_{n+1} \}$. Furthermore, $\max X = \max{\uparrow}v_j$ by the definition of $v_j$. Therefore, for each $x \in V_j$ we can consider the element $s_{v_j, e(x)}$ of $X$. Then let $\hat{q}_j \colon V_j \to X$ be the map defined for every $x \in V_j$ as
\[
\hat{q}_j(x) \coloneqq  \begin{cases}
		 q_j(x) & \text{ if } x \in V_j^+ ;\\
		s_{v_j, e(x)} & \text{ if }x \notin V_j^+ \cup \max \P(\mathsf{2}^k);\\
				w_i & \text{ if }x \in \max \P(\mathsf{2}^k) \text{ and }e(x) = \{w_i\}.
		\end{cases}
\]

We begin by proving that $\hat{q}_j$ is well defined. First, the conditions in the definition of $\hat{q}_j$ are mutually exclusive because $V_j^+ \cap \max \P(\mathsf{2}^k) = \varnothing$ by the definition of $V_j^+$ and the description of $\max \P(\mathsf{2}^k)$ in Proposition \ref{Prop : about Fn}(\ref{item : basic structure of P(X) : 3}). Therefore, it suffices to show that they cover all the possible cases. To this end, consider $x \in V_j$ for which the first two conditions in the definition of $\hat{q}_j$ fail. Then $x \in \max \P(\mathsf{2}^k)$. Therefore, we can apply Proposition \ref{Prop : about Fn}(\ref{item : basic structure of P(X) : 3}) obtaining that $x$ can be below at most one element of the form $\langle b_i^j, \{ b_i^j \} \rangle$ for $i \leq n+1$, in which case $x = \langle b_i^j, \{ b_i^j \} \rangle$. By the definition of $d(x)$ this yields that one of the following conditions holds:
\benormal
\item[A.]\label{item : case 1 : e : well defined} $x \ne \langle b_i^j, \{ b_i^j \} \rangle$ for all $i \leq n+1$ and $d(x) = \emptyset$;
\item[B.] $x = \langle b_i^j, \{ b_i^j \}\rangle$ and $d(x) = \{ w_i \}$ for some $i \leq n+1$.
\enormal
Moreover, $\up x = \{ x \}$ because $x$ is maximal. Together with the definition of $e(x)$, this implies that if case (A) holds, then $e(x) = \{ w_1 \}$, while if case (B) holds, then $e(x) = \{ w_i \}$. In both cases, the third condition in the definition of $\hat{q}_j$ holds for $x$. Hence, we conclude that $\hat{q}_j$ is well defined as desired. 

From the definition of $\hat{q}_j$ it follows that ${\downarrow}v_j \subseteq \hat{q}_j[V_j]$ because $q_j \colon V_j^+ \to {\downarrow}v_j$ is surjective and $\hat{q}_j$ extends $q_j$. Therefore, it only remains to prove that $\hat{q}_j$ is a weak p-morphism.

We begin by proving that $\hat{q}_j$ is order preserving. To this end, let $x, y \in V_j$ be such that $x < y$. If $x, y \in V_j^+$, then $\hat{q}_j(x) = q_j(x) \leq q_j(y) = \hat{q}_j(y)$, where the middle inequality holds because $q_j$ is order preserving by assumption. Therefore, we may assume that either $x$ or $y$ does not belong to $V_j^+$. As $V_j^+$ is a downset of $V_j$, $x < y$, and $x, y \in V_j$, we obtain $y \notin V_j^+$.  By the definition of $\hat{q}_j$ this implies $\hat{q}_j(y) \in \{ s_{v_j, e(y)}, w_1, \dots, w_{n+1}\}$. Since $v_j \leq  s_{v_j, e(y)}$ and $v_j \leq w_1, \dots, w_{n+1}$ (the latter by (\ref{Eq : the first assumption of the inductive step})), we obtain $v_j \leq \hat{q}_j(y)$.
 Now, we have two cases: either $x \in V_j^+$ or $x \notin V_j^+$. 
First suppose that $x \in V_j^+$. Then $\hat{q}_j(x) = q_j(x)$. Since the codomain of $q_j$ 
is ${\downarrow}v_j$, we obtain $q_j(x) \leq v_j$ and, therefore, $\hat{q}_j(x) = q_j(x) \leq v_j \leq \hat{q}_j(y)$ as desired.  Then we consider the case where $x \notin V_j^+$. Since $x < y$ by assumption, we have $x \notin \max \P(\mathsf{2}^k)$. Therefore, $x \notin V_j^+ \cup \max \P(\mathsf{2}^k)$ and by the definition of $\hat{q}_j$ we obtain $\hat{q}_j(x) = s_{v_j, e(x)}$.\ We have two cases: either $\hat{q}_j(y) = s_{v_j, e(y)}$ or not. Suppose first that $\hat{q}_j(y) = s_{v_j, e(y)}$. From $x \leq y$ it follows that $d(y) \subseteq d(x)$. Together with $\max \up y \subseteq \max \up x$ and the definition of $e(x)$ and $e(y)$, this yields $e(y) \subseteq e(x)$. Therefore,  $s_{v_j, e(x)} \leq s_{v_j, e(y)}$ by condition (\ref{item : def : 2}) of Definition \ref{def}. Consequently, $\hat{q}_j(x) = s_{v_j, e(x)} \leq s_{v_j, e(y)} = \hat{q}_j(y)$ as desired. Then we consider the case where $\hat{q}_j(y) \ne s_{v_j, e(y)}$. Since $y \notin V_j^+$ by assumption, the definition of $\hat{q}_j$ guarantees that $\hat{q}_j(y) = w_i$ for some $i \leq n+1$ such that $e(y) = \{w_i\}$.\ As $e(y) \subseteq e(x)$, we get $w_i \in e(x)$. Then $s_{v_j, e(x)} \leq w_i$ by condition (\ref{item : def : 1}) of Definition \ref{def}. Hence, $\hat{q}_j(x) = s_{v_j, e(x)} \leq w_i = \hat{q}_j(y)$. This concludes the proof that $\hat{q}_j$ is order preserving.

Then we prove that $\hat{q}_j$ satisfies the weak p-morphism condition.\ Recall that $\max X = \{ w_1, \dots, w_{n+1} \}$. Then consider $x \in V_j$ and $w_i \in \max {\uparrow}\hat{q}_j(x)$. We need to find some $y \in \max{\uparrow}x$ such that $\hat{q}_j(y) = w_i$. We have two cases: either $x \in \max \P(\mathsf{2}^k)$ or $x \notin \max \P(\mathsf{2}^k)$. Suppose first that $x \in \max \P(\mathsf{2}^k)$. Recall that $V_j^+ \cap \max \P(\mathsf{2}^k) = \varnothing$. Therefore, $x \in \max \P(\mathsf{2}^k) - V_j^+$. Since the map $\hat{q}_j$ is well defined, the element $x$ should satisfy at least one of the three conditions in the definition of $\hat{q}_j$. From $x \in \max \P(\mathsf{2}^k) - V_j^+$ it follows that $x$ can only satisfy the third condition,
which means that there exists $h \leq n+1$ such that $e(x) = \{ w_h \}$ and, therefore, $\hat{q}_j(x) = w_h$. 
From $w_h = \hat{q}_j(x) \leq w_i$ and the fact that $w_h \in \max X$ it follows that $w_h = w_i$, whence $\hat{q}_j(x) = w_i$. Therefore, we are done letting $y \coloneqq x$. Then we consider the case where $x \notin \max \P(\mathsf{2}^k)$. We have two cases: either $x \leq \langle b_i^j, \{ b_i^j \} \rangle$ or $x \nleq \langle b_i^j, \{ b_i^j \} \rangle$. Suppose first that $x \leq \langle b_i^j, \{ b_i^j \} \rangle$. By Proposition \ref{Prop : about Fn}(\ref{item : basic structure of P(X) : 3}) we have $\langle b_i^j, \{ b_i^j \} \rangle \in \max \P(\mathsf{2}^k)$. Consequently, $d(\langle b_i^j, \{ b_i^j \} \rangle) = \{ w_i \}$ and, therefore, $e(\langle b_i^j, \{ b_i^j \} \rangle) = \{ w_i \}$. By the definition of $\hat{q}_j$ this implies $\hat{q}_j(\langle b_i^j, \{ b_i^j \} \rangle) = w_i$. Hence, we are done letting $y \coloneqq \langle b_i^j, \{ b_i^j \} \rangle$. Then we consider the case where  $x \nleq \langle b_i^j, \{ b_i^j \} \rangle$. Together with $x \in V_j$ and the definition of $V_j^+$, this implies $x \notin V_j^+$. Since $x \notin \max \P(\mathsf{2}^k)$, the definition of $\hat{q}_j$ guarantees that $\hat{q}_j(x) = s_{v_j, e(x)}$. As $\hat{q}_j(x) \leq w_i$, this yields $s_{v_j, e(x)} \leq w_i$. By condition (\ref{item : def : 1}) of Definition \ref{def} and the assumption that $w_i \in \max X$ we obtain $w_i \in e(x)$. On the other hand, from $x \nleq \langle b_i^j, \{ b_i^j \} \rangle$ it follows that $w_i \notin d(x)$. Thus, $w_i \in e(x) - d(x)$. By the definition of $e(x)$ this ensures that $i = 1$ and there exists $y \in \max {\uparrow}x$ such that $y \notin \{ \langle b_h^j, \{b_h^j\} \rangle : h \leq n+ 1 \}$. Since $y \in \max \P(\mathsf{2}^k)$, this implies $d(y) = \emptyset$ and $e(y) = \{ w_1 \}$. Hence,  from the definition of $\hat{q}_j$ it follows that  $\hat{q}_j(y) = w_1 = w_i$.
\end{proof}

By Claim \ref{Claim : disjoint} the following set is an upset of $\P(\mathsf{2}^k)$:
\[
U \coloneqq W_1 \cup \dots \cup W_{n+1} \cup V_1 \cup \dots \cup V_t.
\]
Moreover, from Claims \ref{Claim : disjoint} and \ref{Claim : the two maps} it follows that
\[
r \coloneqq \hat{p}_1 \cup \dots \cup \hat{p}_{n+1} \cup \hat{q}_1 \cup \dots \cup \hat{q}_t
\]
is a well-defined weak p-morphism from $U$ to $X$. In addition, $r \colon U \to X$ is surjective by equation (\ref{Eq : the partition of X}) and Claim \ref{Claim : the two maps}. Hence,  the map $r$ can be extended to a surjective weak p-morphism $r^+ \colon \P(\mathsf{2}^k) \to X$ by Proposition \ref{Prop : extension}.
\end{proof}

\section{Axiomatization and decidability}

From Theorem \ref{Thm : MAIN} it follows that the universal theory $\mathsf{Th}_\forall(\boldsymbol{F}_\pDL(\aleph_0))$ is decidable, as we proceed to explain.

\begin{Definition}
Let $\A \in \pDL$ be finite and with universe $A = \{ a_1, \dots, a_n \}$. Then
\benroman
\item the \emph{positive atomic diagram} of $\A$ is the set of equations 
\begin{align*}
\mathsf{diag}(\A)^+ \coloneqq \{ f(x_{i_1}, \dots, x_{i_m}) \thickapprox x_k : & \text{ }f \in \{ \land, \lor, \lnot, 0, 1 \} \text{ and the arity of $f$ is $m$},\\
& \text{ } i_1, \dots, i_m, k \leq n, \text{ and } f^{\A}(a_{i_1}, \dots, a_{i_m}) = a_k \};
\end{align*}
\item the \emph{negative atomic diagram} of $\A$ is the set of negated equations 
\[
\mathsf{diag}(\A)^- \coloneqq \{ x_m \not \thickapprox x_k : m < k \leq n\};
\]
\item the \emph{atomic diagram} of $\A$ is the set of formulas 
\[
\mathsf{diag}(\A) \coloneqq \mathsf{diag}(\A)^+ \cup \mathsf{diag}(\A)^-.
\]
\eroman
\end{Definition}
\noindent Notice that $\mathsf{diag}(\A)$ is a finite set of formulas with free variables among $x_1, \dots, x_n$. 

We will denote the first-order implication, conjunction, and disjunction by $\Rightarrow$, $\fowedge$, and $\fovee$, respectively. The following is an immediate consequence of the definition of an atomic diagram (see, e.g., \cite[Prop.~2.1.8]{CK90}).

\begin{Proposition}\label{Prop : embedding vs existential sentenes}
Let $\A, \B \in \pDL$ with $\A$ finite. Then $\A$ embeds into $\B$ if and only if
\[
\B \vDash \exists x_1, \dots, x_n \fobigwedge \mathsf{diag}(\A).
\]
\end{Proposition}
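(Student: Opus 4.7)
The statement is the standard fact that an algebra embeds into another precisely when the existential closure of its atomic diagram holds there. The plan is to prove the two directions directly, using the interpretation of the variables $x_1, \dots, x_n$ as the enumeration of the elements of $\A$.

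For the forward direction, I would assume an embedding $h \colon \A \to \B$ and propose the assignment $x_i \mapsto h(a_i)$ in $\B$ as a witness for the existential quantifier. Under this assignment, each positive equation $f(x_{i_1}, \dots, x_{i_m}) \thickapprox x_k \in \mathsf{diag}(\A)^+$ holds because, by the definition of $\mathsf{diag}(\A)^+$, we have $f^\A(a_{i_1}, \dots, a_{i_m}) = a_k$, and the homomorphism property of $h$ yields
\[
f^\B(h(a_{i_1}), \dots, h(a_{i_m})) = h(f^\A(a_{i_1}, \dots, a_{i_m})) = h(a_k).
\]
This also covers the nullary case of the constants $0$ and $1$. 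Each negated equation $x_m \not\thickapprox x_k$ in $\mathsf{diag}(\A)^-$ (with $m < k$) holds because the elements $a_m$ and $a_k$ are distinct by the enumeration of $A$, and the injectivity of $h$ gives $h(a_m) \ne h(a_k)$.

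For the converse direction, I would assume that there exist $b_1, \dots, b_n \in B$ witnessing $\fobigwedge \mathsf{diag}(\A)$ in $\B$ and define the map $h \colon A \to B$ by $h(a_i) \coloneqq b_i$. The map is well-defined because $a_1, \dots, a_n$ are distinct, and it is injective because $\mathsf{diag}(\A)^-$ forces $b_m \ne b_k$ whenever $m < k$. To show that $h$ is a homomorphism of pseudocomplemented distributive lattices, I would consider any basic operation symbol $f$ (of any arity, including the nullary symbols $0$ and $1$) and any choice $a_{i_1}, \dots, a_{i_m}$ of elements of $A$: setting $a_k \coloneqq f^\A(a_{i_1}, \dots, a_{i_m})$, the equation $f(x_{i_1}, \dots, x_{i_m}) \thickapprox x_k$ belongs to $\mathsf{diag}(\A)^+$, whence
\[
f^\B(h(a_{i_1}), \dots, h(a_{i_m})) = f^\B(b_{i_1}, \dots, b_{i_m}) = b_k = h(a_k) = h(f^\A(a_{i_1}, \dots, a_{i_m})).
\]
Thus $h \colon \A \to \B$ is an embedding.

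There is no substantive obstacle: the proof is a direct unravelling of the definitions, and the only care required is to ensure that nullary operations and the injectivity clause are handled explicitly. Since the result is quoted as a standard fact from \cite{CK90}, the proof can be presented concisely along the lines above.
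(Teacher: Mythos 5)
Your proof is correct and is precisely the standard argument that the paper delegates to \cite[Prop.~2.1.8]{CK90} without writing it out: the paper states the proposition as an immediate consequence of the definition of the atomic diagram and gives no proof of its own. Your direct verification of both directions, including the explicit handling of the nullary operations and the injectivity clause, fills in exactly what is left implicit.
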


Recall that $\pDL$ is a finitely axiomatizable class, so let $\Sigma$ be a finite set of axioms for $\pDL$.

\begin{Proposition}\label{Prop : the theory is RAxiom}
The theory $\mathsf{Th}_\forall(\boldsymbol{F}_\pDL(\aleph_0))$ is recursively axiomatizable by
\[
\Sigma \cup \{ \lnot \exists x_1, \dots, x_n \fobigwedge \mathsf{diag}(\A) : \A \text{ belongs to $\pDL$, is finite, and $\A_*$ lacks a free skeleton}\}.
\]
\end{Proposition}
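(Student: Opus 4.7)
The plan is to establish two things: the given set is recursive, and it axiomatizes $\mathsf{Th}_\forall(\boldsymbol{F}_\pDL(\aleph_0))$. Since every formula of the form $\lnot \exists x_1, \dots, x_n \fobigwedge \mathsf{diag}(\A)$ is logically equivalent to a universal sentence, the proposed set is a set of universal sentences; to axiomatize $\mathsf{Th}_\forall(\boldsymbol{F}_\pDL(\aleph_0))$ it therefore suffices to show that its class of models coincides with the class of models of $\mathsf{Th}_\forall(\boldsymbol{F}_\pDL(\aleph_0))$.

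For the model-theoretic equivalence I would combine Theorem \ref{Thm : MAIN} with Proposition \ref{Prop : embedding vs existential sentenes}. By the latter, given a finite $\A \in \pDL$ and an arbitrary $\B \in \pDL$, the formula $\lnot \exists x_1, \dots, x_n \fobigwedge \mathsf{diag}(\A)$ holds in $\B$ if and only if $\A$ does not embed into $\B$. Consequently, $\B$ is a model of the whole proposed axiom set precisely when $\B \in \pDL$ and no finite $\A \in \pDL$ with $\A_\ast$ lacking a free skeleton embeds into $\B$. Since a finite algebra embeds into $\B$ exactly when it is isomorphic to a finite subalgebra of $\B$, this is in turn equivalent to the demand that every finite subalgebra of $\B$ has a Priestley dual with a free skeleton, which is the characterization supplied by Theorem \ref{Thm : MAIN}.

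For recursiveness, I would argue as follows. The finite set $\Sigma$ is trivially decidable. For the remaining axioms, observe that the atomic diagram $\mathsf{diag}(\A)$ encodes the complete operation tables of $\A$ together with the distinctness of the elements $a_1, \dots, a_n$, so a formula of the shape $\lnot \exists x_1, \dots, x_n \fobigwedge \Phi$ can be mechanically parsed to test whether $\Phi$ really is an atomic diagram and, in the affirmative case, to recover the corresponding finite algebra $\A$ on $\{a_1, \dots, a_n\}$. One then checks (i) that $\A \in \pDL$, which is decidable because $\pDL$ is finitely axiomatizable and $\A$ is finite, and (ii) that $\A_\ast$ lacks a free skeleton, which reduces to verifying the finitely many conditions of Definition \ref{def} on the finite poset $\A_\ast$, or equivalently the conditions of Proposition \ref{Prop : equiv free skel} directly on $\A$.

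The main obstacle I anticipate is not mathematical but notational: one must verify carefully that the parsing of a candidate axiom into a finite algebra, and the subsequent decidable checks, really fit together into a recursive procedure. All the substantive content has been established in the preceding sections, and the task here amounts to a careful formalization of the fact that operations on finite algebras of finite type are effectively manipulable.
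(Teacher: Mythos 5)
Your proposal is correct and follows essentially the same route as the paper: the axiomatization claim is obtained by combining Theorem \ref{Thm : MAIN} with Proposition \ref{Prop : embedding vs existential sentenes}, and recursiveness follows from the finiteness of $\Sigma$ together with the mechanical checkability of membership in $\pDL$ and of the free skeleton condition on the finite poset $\A_\ast$. Your write-up merely spells out the model-class computation and the parsing procedure in more detail than the paper's (quite terse) proof does.
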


\begin{proof}
From Theorem \ref{Thm : MAIN} and Proposition \ref{Prop : embedding vs existential sentenes} it follows that the set of formulas in the statement axiomatizes $\mathsf{Th}_\forall(\boldsymbol{F}_\pDL(\aleph_0))$. Therefore, it only remains to prove that this set is recursive. This follows from the fact that $\Sigma$ is finite and that we can check mechanically whether a finite $\A \in \pDL$ has a free skeleton by inspecting the poset $\A_\ast$.
\end{proof}

Since $\pDL$ is a finitely axiomatizable and locally finite variety of finite type, we can apply Theorem \ref{Thm : recursively axiomatizable implies decidable} and Proposition  \ref{Prop : the theory is RAxiom}, obtaining the desired conclusion.

\begin{Theorem}\label{Thm : decidable}
The theory $\mathsf{Th}_\forall(\boldsymbol{F}_\pDL(\aleph_0))$ is decidable.
\end{Theorem}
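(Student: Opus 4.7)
The plan is to recognize that Theorem \ref{Thm : decidable} is essentially a one-line corollary obtained by combining two results already established in the excerpt: the abstract transfer result of Theorem \ref{Thm : recursively axiomatizable implies decidable}, and the concrete recursive axiomatization of Proposition \ref{Prop : the theory is RAxiom}. The task reduces to checking that the hypotheses of the former apply to $\mathsf{V} = \pDL$.

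First, I would verify the three structural hypotheses needed to invoke Theorem \ref{Thm : recursively axiomatizable implies decidable} for $\pDL$. Finite axiomatizability of $\pDL$ is recorded in the paragraph preceding Theorem \ref{Thm : PDL locally finite} (citing \cite[Thm.~VIII.3.1]{BD74}); local finiteness is exactly Theorem \ref{Thm : PDL locally finite}; and the signature $\langle \land, \lor, \lnot, 0, 1 \rangle$ is visibly finite, so $\pDL$ is of finite type. Then I would invoke Proposition \ref{Prop : the theory is RAxiom} to observe that $\mathsf{Th}_\forall(\boldsymbol{F}_\pDL(\aleph_0))$ is recursively axiomatizable. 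Applying Theorem \ref{Thm : recursively axiomatizable implies decidable} with these ingredients yields that $\mathsf{Th}_\forall(\boldsymbol{F}_\pDL(\aleph_0))$ is decidable, as desired.

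There is no real obstacle in this step: the conceptual work lies upstream, in Proposition \ref{Prop : the main proposition} (the free-skeleton characterization of finite subalgebras of the free algebra) and in the abstract transfer Theorem \ref{Thm : recursively axiomatizable implies decidable}. Once both are in hand, the present theorem is only a bookkeeping matter of assembling the hypotheses, so the proof should be stated in a single short paragraph that cites Proposition \ref{Prop : the theory is RAxiom}, the locally finite and finitely axiomatizable status of $\pDL$, and Theorem \ref{Thm : recursively axiomatizable implies decidable}.
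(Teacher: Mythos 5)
Your proposal matches the paper's own argument exactly: the paper derives Theorem \ref{Thm : decidable} by noting that $\pDL$ is a finitely axiomatizable, locally finite variety of finite type and then applying Theorem \ref{Thm : recursively axiomatizable implies decidable} together with Proposition \ref{Prop : the theory is RAxiom}. Your verification of the three hypotheses and the assembly of the two ingredients is precisely the intended one-paragraph proof.
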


While Proposition \ref{Prop : the theory is RAxiom} provides indeed a recursive axiomatization for $\mathsf{Th}_\forall(\boldsymbol{F}_\pDL(\aleph_0))$, this axiomatization has the obvious flaw of being a ``brute force'' one, in the sense that it consists in prohibiting one by one all the finite pseudocomplemented distributive lattices that are not models of the theory. In the rest of this section, we will amend this by presenting an alternative axiomatization of $\mathsf{Th}_\forall(\boldsymbol{F}_\pDL(\aleph_0))$ which, although still infinite, captures the idea of ``having a free skeleton'' in a more cogent way.

To this end, it is convenient to recall the definition of some terms introduced in \cite{KS24}.\ Let $\vec{x} = \langle x_1, \dots, x_n\rangle$ be a tuple of variables and denote the powerset of $\{1,\dots, n\}$ by $\wp(n)$. 
For every $T \in \wp(n)$ consider the term
\[
a_T(\vec{x}) \coloneqq \bigwedge_{i \in T} x_i \wedge \bigwedge_{i \notin T} \neg x_i.
\]
Then let 
\[
\mathcal{S}(n)\coloneqq \lbrace\langle L,\T \rangle :  L \in \wp(n), \ \T \subseteq \wp(n), \text{ and } \varnothing \ne L \subseteq \bigcap \T\rbrace.
\] 
For each $\langle L, \T \rangle \in \mathcal{S}(n)$ consider the term
\[
\plt(\vec{x}) \coloneqq \bigwedge_{i \in L} x_i \wedge \neg \neg \left( \bigvee_{T \in \T} a_T(\vec{x}) \right).
\]
Moreover, define a partial order $\leq$ on $\mathcal{S}(n)$ by setting
\[
\langle L,\T\rangle \leq \langle L',\T'\rangle \iff L \supseteq L'  \text{ and }\T \subseteq \T'.
\]

 We  will rely on the following observation, which stems from \cite{KS24}. Since its proof requires a detour from the issue under consideration, we decided to include it in the Appendix.

\begin{Proposition}\label{Prop : atoms jirr in fin gen}
Let $\A \in \pDL$ be generated by $a_1, \dots, a_n$ and let $\vec{a}=\langle a_1, \dots, a_n\rangle$. The following conditions hold for every $b \in A$:
\benroman
\item\label{item : atoms jirr in fin gen : 1} $b \in \At(\A)$ if and only if $b \ne 0$ and there exists $T \in \wp(n)$ such that $b=a_T(\vec{a})$;
\item\label{item : atoms jirr in fin gen : 2} $b \in \Jirr(\A)$ if and only if there exists $\langle L,T\rangle \in \mathcal{S}(n)$ such that 
\[
b=\plt(\vec{a}) \quad \text{and} \quad b \ne \bigvee \{p_{L'}^{\T'}(\vec{a}) : \langle L', \T' \rangle \in \mathcal{S}(n) \text{ and } \langle L',\T'\rangle < \langle L,\T\rangle \}.
\]
\eroman
\end{Proposition}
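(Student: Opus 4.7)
My plan is to prove (\ref{item : atoms jirr in fin gen : 1}) directly by a term-structural induction, and to obtain (\ref{item : atoms jirr in fin gen : 2}) by transferring the description of the join-irreducibles of the finitely generated free pseudocomplemented distributive lattice obtained in \cite{KS24} along the canonical surjection $h \colon \boldsymbol{F}_\pDL(n) \twoheadrightarrow \A$ determined by $h(x_i) = a_i$.

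The core of (\ref{item : atoms jirr in fin gen : 1}) will be the claim that for every $T \in \wp(n)$ and every $c \in A$ one has $a_T(\vec{a}) \wedge c \in \{0, a_T(\vec{a})\}$. Since $\A$ is generated by $\vec{a}$, I would prove this by induction on a term $t$ realizing $c = t(\vec{a})$. The base case $c = a_i$ splits by whether $i \in T$: if so, $a_T(\vec{a}) \leq a_i$ by construction; if not, $a_T(\vec{a}) \leq \neg a_i$ and the meet is $0$. The steps for $\wedge$ and $\vee$ are purely formal. The only delicate case is $c = \neg d$: the inductive hypothesis gives $a_T(\vec{a}) \wedge d \in \{0, a_T(\vec{a})\}$, and in the first subcase $a_T(\vec{a}) \leq \neg d$, while in the second $a_T(\vec{a}) \leq d$ forces $a_T(\vec{a}) \wedge \neg d \leq d \wedge \neg d = 0$. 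Once the claim is established, any nonzero $a_T(\vec{a})$ is an atom, since $0 < c \leq a_T(\vec{a})$ forces $c = a_T(\vec{a}) \wedge c = a_T(\vec{a})$. Conversely, given an atom $b$, I would set $T \coloneqq \{i : b \leq a_i\}$; for $i \notin T$, atomicity of $b$ yields $b \wedge a_i = 0$, hence $b \leq \neg a_i$ and therefore $b \leq a_T(\vec{a})$. As $a_T(\vec{a}) \ne 0$ is itself an atom by the backward direction, this forces $b = a_T(\vec{a})$.

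For (\ref{item : atoms jirr in fin gen : 2}), the plan is to extract from \cite{KS24} two ingredients: a normal-form result showing that every element of $\boldsymbol{F}_\pDL(n)$ is a join of terms of the form $p_L^\T(\vec{x})$, and an explicit description of those $\langle L, \T \rangle$ for which the corresponding term is join-irreducible in the free algebra. The first transfers along $h$ to show that every element of $A$ is a join of elements $p_L^\T(\vec{a})$, so, since $\A$ is finite, every join-irreducible of $\A$ has the form $p_L^\T(\vec{a})$. Combined with the immediate monotonicity $\langle L', \T' \rangle \leq \langle L, \T \rangle \Rightarrow p_{L'}^{\T'}(\vec{a}) \leq p_L^\T(\vec{a})$ (read off from the definitions) and a well-foundedness argument on the finite poset $\mathcal{S}(n)$, this yields the forward direction of (\ref{item : atoms jirr in fin gen : 2}): among all $\langle L, \T \rangle$ with $b = p_L^\T(\vec{a})$ pick one minimal with respect to $\leq$, so that any strictly smaller pair delivers an element strictly below $b$; the essentiality condition then holds, since a join of elements strictly below a join-irreducible cannot recover it.

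The main obstacle I anticipate lies in the backward direction of (\ref{item : atoms jirr in fin gen : 2}): a putative nontrivial decomposition $b = \bigvee_i c_i$ with $c_i < b$ expands, via the normal form, to a join of $p_{L_i}^{\T_i}(\vec{a})$'s with $p_{L_i}^{\T_i}(\vec{a}) < b$, but the pairs $\langle L_i, \T_i \rangle$ need not themselves be strictly below $\langle L, \T \rangle$ in $\mathcal{S}(n)$, since the quotient $h$ may collapse incomparable pairs of $\mathcal{S}(n)$ to equal elements of $\A$. Resolving this will require the finer structural output of \cite{KS24}—in particular, a way to replace each $\langle L_i, \T_i \rangle$ yielding a strictly smaller value by a canonical $\mathcal{S}(n)$-pair that is strictly below $\langle L, \T \rangle$—which is precisely why the authors defer the full argument to the appendix; my write-up would follow the same structure.
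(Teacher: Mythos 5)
Your part (\ref{item : atoms jirr in fin gen : 1}) is correct and takes a genuinely different, more elementary route than the paper: the paper obtains it by citing the description of $\At(\boldsymbol{F}_\pDL(n))$ from \cite{KS24} and transferring it along the canonical surjection $h \colon \boldsymbol{F}_\pDL(n) \to \A$ via a general lemma on how atoms behave under surjective homomorphisms of finite algebras, whereas your term induction establishing $a_T(\vec{a}) \wedge c \in \{0, a_T(\vec{a})\}$ is self-contained and needs nothing about free algebras. The forward direction of (\ref{item : atoms jirr in fin gen : 2}) is also essentially sound: the normal form you invoke is an immediate consequence of the finiteness of $\boldsymbol{F}_\pDL(n)$ together with the \cite{KS24} description of its join-irreducibles, and your minimality argument in the finite poset $\mathcal{S}(n)$, combined with the easy monotonicity of $\langle L,\T\rangle \mapsto \plt(\vec{a})$ and join-irreducibility of $b$, does deliver the essentiality condition.

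The backward direction of (\ref{item : atoms jirr in fin gen : 2}) is, however, a genuine gap: you have located the obstacle correctly but not resolved it, and the resolution is not ``finer structural output of \cite{KS24}'' but a general pullback of join-irreducibles along the surjection $h$. Given $d \in \Jirr(\A)$ with $d < b$, the set $h^{-1}[\up d]$ is a prime filter of $\boldsymbol{F}_\pDL(n)$ (it is the preimage of the prime filter $\up d$ under a bounded lattice homomorphism), so $c \coloneqq \bigwedge h^{-1}[\up d]$ is join-irreducible in $\boldsymbol{F}_\pDL(n)$, satisfies $h(c)=d$, and satisfies $c < a$ for $a \coloneqq \plt(\vec{x}/\theta_n)$, because $d < b = h(a)$ forces $a \in h^{-1}[\up d]$, hence $c \leq a$, while $h(c) = d \ne b = h(a)$ rules out equality. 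Thus every join-irreducible of $\A$ strictly below $b$ lies in $\{h(c) : c \in \Jirr(\boldsymbol{F}_\pDL(n)) \text{ and } c < a\}$, and since in the finite distributive lattice $\A$ the join of all elements strictly below $b$ equals the join of the join-irreducibles strictly below $b$, the hypothesis that $b$ differs from the displayed join gives $b \in \Jirr(\A)$. Note, finally, that to identify $\{h(c) : c \in \Jirr(\boldsymbol{F}_\pDL(n)) \text{ and } c < a\}$ with $\{p_{L'}^{\T'}(\vec{a}) : \langle L',\T'\rangle < \langle L,\T\rangle\}$ you need the \emph{nontrivial} half of \cite[Thm.~5.7(2)]{KS24}, namely that $p_{L'}^{\T'}(\vec{x}/\theta_n) \leq \plt(\vec{x}/\theta_n)$ already implies $\langle L',\T'\rangle \leq \langle L,\T\rangle$; the monotonicity you read off the definitions goes only the other way, and without the converse the pulled-back set could be strictly larger than the one your hypothesis controls.
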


The description of $\At(\A)$ and $\Jirr(\A)$ in the above result can be expressed with first-order formulas in the language of pseudocomplemented distributive lattices as follows.

\begin{Definition}
Given $n \in \mathbb{Z}^+$ and a term $t=t(\vec{x})$ with $\vec{x} = \langle x_1, \dots, x_n\rangle$, let
\begin{align*}
\At_{t, n}(\vec{x}) & \coloneqq t \not\approx 0 \fowedge \fobigvee_{T \in \wp(n)} t \approx a_T(\vec{x});\\
\Jirr_{t, n}(\vec{x}) & \coloneqq \fobigvee_{\langle L,\T\rangle \in \mathcal{S}(n)} \left(t \approx \plt(\vec{x}) \fowedge t \not\approx \bigvee \{p_{L'}^{\T'}(\vec{x}) : \langle L',T'\rangle \in \mathcal{S}(n) \text{ and }\langle L',T'\rangle < \langle L,T\rangle \}\right).
\end{align*}
\end{Definition}

The following is an immediate consequence of Proposition~\ref{Prop : atoms jirr in fin gen}.

\begin{Corollary}\label{Cor : Att and Jirrt}
Let $\A \in \pDL$ be generated by $a_1, \dots, a_n$ and let $\vec{a}=\langle a_1, \dots, a_n\rangle$. The following conditions hold for every term $t(x_1, \dots, x_n)$:
\benroman
\item $t(\vec{a}) \in \At(\A)$ if and only if $\A \vDash \At_{t, n}(\vec{a})$;
\item $t(\vec{a}) \in \Jirr(\A)$ if and only if $\A \vDash \Jirr_{t, n}(\vec{a})$.
\eroman
\end{Corollary}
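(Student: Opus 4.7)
The plan is very short: both clauses follow by unwinding the definitions of the first-order formulas $\At_{t,n}$ and $\Jirr_{t,n}$ and matching them term-by-term with the algebraic conditions already established in Proposition~\ref{Prop : atoms jirr in fin gen}. The key observation is that the formulas were crafted precisely to transcribe those conditions, so there is nothing to prove beyond this bookkeeping; no new algebraic content is needed.

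For part (i), the plan is to fix a term $t(x_1,\dots,x_n)$ and read off the semantics: by construction,
\[
\A \vDash \At_{t,n}(\vec{a}) \iff t(\vec{a}) \ne 0 \text{ and } t(\vec{a}) = a_T(\vec{a}) \text{ for some } T \in \wp(n).
\]
Setting $b \coloneqq t(\vec{a})$, the right-hand side is exactly the characterization of $b \in \At(\A)$ given in Proposition~\ref{Prop : atoms jirr in fin gen}(\ref{item : atoms jirr in fin gen : 1}). This gives clause (i).

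For part (ii), the plan is the same:
\[
\A \vDash \Jirr_{t,n}(\vec{a}) \iff \exists \langle L,\T \rangle \in \mathcal{S}(n):\ t(\vec{a}) = p_L^\T(\vec{a}) \text{ and } t(\vec{a}) \ne \bigvee\{ p_{L'}^{\T'}(\vec{a}) : \langle L',\T'\rangle < \langle L,\T\rangle\}.
\]
Again setting $b \coloneqq t(\vec{a})$, the right-hand side is exactly the characterization of $b \in \Jirr(\A)$ in Proposition~\ref{Prop : atoms jirr in fin gen}(\ref{item : atoms jirr in fin gen : 2}), so clause (ii) follows. There is no real obstacle here; the only thing to be mildly careful about is that the big disjunction $\fobigvee$ and big conjunction $\fobigwedge$ in the definitions are indexed by the finite sets $\wp(n)$ and $\mathcal{S}(n)$, so the satisfaction semantics in $\A$ really does reduce to the existential/conjunctive statements above without any subtle issues about infinitary formulas.
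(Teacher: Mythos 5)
Your proposal is correct and matches the paper exactly: the paper gives no separate argument, stating only that the Corollary is an immediate consequence of Proposition~\ref{Prop : atoms jirr in fin gen}, which is precisely the definition-unwinding you carry out. Nothing further is needed.
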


When $\vec{x}$ and $\vec{y}$ are disjoint tuples of variables, $t$ a term, and $G$ a first-order formula, we will often write $t(\vec{x},\vec{y})$ and $G(\vec{x},\vec{y})$ to denote $t(\vec{u})$ and $G(\vec{u})$, where $\vec{u}$ is the concatenation of $\vec{x}$ and $\vec{y}$.

\begin{Definition}\label{def:F(x,y)}
Let $P(\vec{x},\vec{y})$ and $Q(\vec{x},\vec{y},\vec{z})$ be quantifier-free formulas, where $\vec{x}= \langle x_1, \dots, x_n\rangle$, $\vec{y} = \langle y_1, \dots, y_m\rangle$, and $\vec{z} = \langle z_1, \dots, z_k \rangle$ are disjoint tuples of variables. Then let $F_{n, m, k}(\vec{x},\vec{y})$ be the quantifier-free formula $A \Rightarrow B$ with
\begin{align*}
A &\coloneqq \Jirr_{x_1, n+m}(\vec{x}, \vec{y})  \fowedge \fobigwedge_{i=1}^m \At_{y_i, n+m}(\vec{x}, \vec{y}) \fowedge P(\vec{x}, \vec{y});\\
B &\coloneqq \fobigvee_{\mathcal{S}' \in \mathcal{S}({n+m})^k} \Bigg( Q(\vec{x},\vec{y}, p_1, \dots, p_k) \fowedge  \fobigwedge_{j=1}^k \Jirr_{p_j, n+m}(\vec{x}, \vec{y}) \Bigg),
\end{align*}
where $\mathcal{S}' = \langle \langle L_1,\T_1\rangle, \dots, \langle L_k,\T_k\rangle \rangle$ and $p_j$ denotes the term $p_{L_j}^{\T_j}(\vec{x},\vec{y})$ for every $j \leq k$.
\end{Definition}

\begin{Proposition}\label{Prop : correspondence qf formula}
Let $\A \in \pDL$, $\vec{a}=\langle a_1, \dots, a_n\rangle \in A^n$, and $\vec{b}=\langle b_1, \dots, b_m \rangle \in A^m$.\ Moreover, let $\B$ be the subalgebra of $\A$ generated by $a_1, \dots, a_n, b_1, \dots, b_m$. If $F_{n, m, k}(\vec{x},\vec{y})$ is as in Definition~\ref{def:F(x,y)}, then
\begin{align*}
\A \vDash F_{n, m, k}(\vec{a}, \vec{b}) \iff &\text{if }a_1 \in \Jirr(\B), \ b_1, \dots, b_m \in \At(\B), \text{ and } \B \vDash P(\vec{a}, \vec{b}),\\
& \text{then there exists } \vec{c} \in \Jirr(\B)^k \text{ such that } \B \vDash Q(\vec{a}, \vec{b}, \vec{c}).
\end{align*}
\end{Proposition}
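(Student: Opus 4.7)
The plan is to unfold the definition of $F_{n, m, k}$ and reduce the claim to Corollary~\ref{Cor : Att and Jirrt} (applied to the subalgebra $\B$, which is generated by the $n+m$ elements $a_1, \dots, a_n, b_1, \dots, b_m$) together with the standard observation that quantifier-free formulas are preserved in both directions between $\B$ and its extension $\A$: whenever $G$ is a quantifier-free formula and $\vec{u}$ is a tuple drawn from $B$, one has $\A \vDash G(\vec{u})$ if and only if $\B \vDash G(\vec{u})$. This applies in particular to $\At_{t, n+m}$, $\Jirr_{t, n+m}$, $P$, and $Q$, since all of these are quantifier-free.

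For the forward direction, I would assume $\A \vDash F_{n, m, k}(\vec{a}, \vec{b})$ and take the antecedent of the biconditional, namely that $a_1 \in \Jirr(\B)$, $b_1, \dots, b_m \in \At(\B)$, and $\B \vDash P(\vec{a}, \vec{b})$. By Corollary~\ref{Cor : Att and Jirrt}, the hypotheses about atoms and join-irreducibles translate into satisfaction of $\Jirr_{x_1, n+m}(\vec{a}, \vec{b})$ and each $\At_{y_i, n+m}(\vec{a}, \vec{b})$ in $\B$, and by preservation these hold in $\A$ as well. Hence $\A$ satisfies the antecedent of $F_{n, m, k}(\vec{a}, \vec{b})$, so it satisfies the consequent: there exists $\mathcal{S}' = \langle \langle L_1, \T_1\rangle, \dots, \langle L_k, \T_k\rangle\rangle \in \mathcal{S}(n+m)^k$ such that, setting $p_j \coloneqq p_{L_j}^{\T_j}(\vec{a}, \vec{b})$, both $Q(\vec{a}, \vec{b}, p_1, \dots, p_k)$ and $\Jirr_{p_j, n+m}(\vec{a}, \vec{b})$ hold in $\A$. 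Each $p_j$ lies in $B$ because it is a term in the generators of $\B$, so by preservation these satisfactions descend to $\B$, and a second appeal to Corollary~\ref{Cor : Att and Jirrt} yields $p_j \in \Jirr(\B)$; then $\vec{c} \coloneqq \langle p_1, \dots, p_k\rangle$ is the required witness.

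The backward direction runs symmetrically. Assuming the right-hand side of the biconditional, I would suppose $\A$ satisfies the antecedent of $F_{n, m, k}$ at $(\vec{a}, \vec{b})$. By preservation the antecedent holds in $\B$ as well, and Corollary~\ref{Cor : Att and Jirrt} translates it into $a_1 \in \Jirr(\B)$, each $b_i \in \At(\B)$, and $\B \vDash P(\vec{a}, \vec{b})$. The assumption then provides $\vec{c} = \langle c_1, \dots, c_k\rangle \in \Jirr(\B)^k$ with $\B \vDash Q(\vec{a}, \vec{b}, \vec{c})$. The essential ingredient now is Proposition~\ref{Prop : atoms jirr in fin gen}(\ref{item : atoms jirr in fin gen : 2}), which (since $\B$ is generated by the $n+m$ elements listed above) guarantees that each $c_j$ equals $p_{L_j}^{\T_j}(\vec{a}, \vec{b})$ for some $\langle L_j, \T_j\rangle \in \mathcal{S}(n+m)$. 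Taking $\mathcal{S}' \coloneqq \langle \langle L_j, \T_j\rangle\rangle_{j=1}^k$ and applying preservation together with Corollary~\ref{Cor : Att and Jirrt} in the converse direction shows that the disjunct of the consequent of $F_{n, m, k}$ corresponding to $\mathcal{S}'$ is satisfied in $\A$.

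There is no substantive obstacle: the argument is pure bookkeeping between $\A$ and $\B$, with the only nontrivial inputs being preservation of quantifier-free formulas and the canonical-form descriptions of $\At(\B)$ and $\Jirr(\B)$ already packaged in Corollary~\ref{Cor : Att and Jirrt} and Proposition~\ref{Prop : atoms jirr in fin gen}. The one point that requires mild care is to remember that Corollary~\ref{Cor : Att and Jirrt} must be invoked with its parameter $n$ replaced by $n+m$, so that it matches the generating set $\{a_1, \dots, a_n, b_1, \dots, b_m\}$ of $\B$ and aligns with the subscript $n+m$ appearing throughout the definition of $F_{n, m, k}$.
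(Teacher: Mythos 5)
Your proposal is correct and follows essentially the same route as the paper's proof: reduce to $\B$ via preservation of quantifier-free formulas, translate the antecedent and consequent of $F_{n,m,k}$ using Corollary~\ref{Cor : Att and Jirrt} with parameter $n+m$, and use Proposition~\ref{Prop : atoms jirr in fin gen}(\ref{item : atoms jirr in fin gen : 2}) to put an arbitrary tuple of join-irreducibles of $\B$ into the canonical form $p_{L}^{\T}(\vec{a},\vec{b})$. The only difference is organizational (the paper transfers everything to $\B$ once at the outset, while you shuttle between $\A$ and $\B$ in each direction), which does not affect correctness.
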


\begin{proof}
We begin by observing that $\A \vDash F_{n, m, k}(\vec{a},\vec{b})$ if and only if $\B \vDash F_{n, m, k}(\vec{a},\vec{b})$ because $F_{n, m, k}(\vec{x},\vec{y})$ is quantifier-free and $a_1, \dots, a_n, b_1, \dots, b_m \in B$. Therefore, it will be enough to prove that
\begin{equation}\label{Eq : the splitted equation}
\begin{split}
\B \vDash F_{n, m, k}(\vec{a}, \vec{b}) \iff &\text{if }a_1 \in \Jirr(\B), \ b_1, \dots, b_m \in \At(\B), \text{ and } \B \vDash P(\vec{a}, \vec{b}),\\
& \text{then there exists } \vec{c} \in \Jirr(\B)^k \text{ such that } \B \vDash Q(\vec{a}, \vec{b}, \vec{c}).
\end{split}
\end{equation}
Since $\B$ is generated by $a_1, \dots, a_n, b_1, \dots, b_m$, the following conditions hold by Corollary~\ref{Cor : Att and Jirrt}:
\benroman
\item the antecedent of $F_{n, m, k}(\vec{a},\vec{b})$ holds in $\B$ if and only if $\B \vDash P(\vec{a},\vec{b})$ with $a_1 \in \Jirr(\B)$ and $b_1, \dots, b_m \in \At(\B)$;
\item the consequent of $F_{n, m, k}(\vec{a},\vec{b})$ holds in $\B$ if and only if there exist $\langle L_1, \T_1 \rangle, \dots, \langle L_k, \T_k\rangle \in \mathcal{S}(n+m)$ such that
\[
\B \vDash Q(\vec{a},\vec{b},p_{L_1}^{\T_1}(\vec{a},\vec{b}), \dots,p_{L_k}^{\T_k}(\vec{a},\vec{b})) \quad \text{with} \quad p_{L_1}^{\T_1}(\vec{a},\vec{b}), \dots,p_{L_k}^{\T_k}(\vec{a},\vec{b}) \in \Jirr(\B).
\]
\eroman
Therefore, in order to prove (\ref{Eq : the splitted equation}), it only remains to show that the following conditions are equivalent:
\benroman
\item[A.] there exists $\vec{c} \in \Jirr(\B)^k$ such that $\B \vDash Q(\vec{a}, \vec{b}, \vec{c})$;
\item[B.] there exist $\langle L_1, \T_1 \rangle, \dots, \langle L_k, \T_k\rangle \in \mathcal{S}(n+m)$ such that $\B \vDash Q(\vec{a},\vec{b},p_{L_1}^{\T_1}(\vec{a},\vec{b}), \dots,p_{L_k}^{\T_k}(\vec{a},\vec{b}))$ with $p_{L_1}^{\T_1}(\vec{a},\vec{b}), \dots,p_{L_k}^{\T_k}(\vec{a},\vec{b}) \in \Jirr(\B)$.
\eroman
Clearly, (B) implies (A). Then we prove the converse. Suppose that (A) holds, that is, there exists $\vec{c}=\langle c_1,\dots, c_k \rangle \in \Jirr(\B)^k$ such that $\B\vDash Q(\vec{a}, \vec{b}, \vec{c})$. Since $\B$ is generated by $a_1, \dots, a_n, b_1, \dots, b_m$,
 Proposition~\ref{Prop : atoms jirr in fin gen}(\ref{item : atoms jirr in fin gen : 2}) implies that for every $i \leq k$ there exists $\langle L_i, \T_i \rangle \in \mathcal{S}(n+m)$ such that $c_i=p_{L_i}^{\T_i}(\vec{a},\vec{b})$. Therefore, (B) holds as desired.
\end{proof}

Let $\vec{x}= \langle x_1, \dots, x_n\rangle$ and $\vec{y} = \langle y_1, \dots, y_m\rangle$ be disjoint tuples of variables. We will define two kinds of formulas:
\benroman 
\item for each $h \leq m$ let
\[
P_{n, m, h}(\vec{x},\vec{y}) \coloneqq \fobigwedge_{i=1}^h y_i \leq x_1 \fowedge \fobigwedge_{j=h+1}^m y_j \nleq x_1;
\]
\item for each $h \geq 1$ fix a bijection $f \colon \{1, \dots, 2^h-1\} \to \wp(h) - \{\varnothing\}$. Then for each tuple  of variables $\vec{z} = \langle z_1, \dots, z_{2^h-1} \rangle$ disjoint from $\vec{x}$ and $\vec{y}$ let $Q_{n, m, h}(\vec{x},\vec{y},\vec{z})$ be the formula
\[
 \bigg(\fobigwedge_{i \leq 2^h-1} z_i \leq x_1\bigg) 
\fowedge \bigg(\fobigwedge_{\substack{i,j \leq 2^h-1\\ f(i) \subseteq f(j)}} z_i \leq z_j\bigg)
\fowedge \bigg(\fobigwedge_{\substack{i \leq 2^h-1, \ j \leq m\\j \in f(i)}} y_j \leq z_i\bigg) 
\fowedge \bigg(\fobigwedge_{\substack{i \leq 2^h-1, \ j \leq m\\j \notin f(i)}} y_j \nleq z_i\bigg).
\]
\eroman

\begin{Definition}
Let $\vec{x}= \langle x_1, \dots, x_n\rangle$ and $\vec{y} = \langle y_1, \dots, y_m\rangle$ be disjoint tuples of variables. Then for each $1 \leq h \leq m$ let $\textup{FS}_{n, m, 2^h-1}(\vec{x},\vec{y})$ be the quantifier-free formula obtained from $P_{n ,m, h}(\vec{x},\vec{y})$ and $Q_{n, m, h}(\vec{x},\vec{y}, \vec{z})$ as in Definition~\ref{def:F(x,y)}, where $\vec{z}= \langle z_1, \dots, z_{2^h-1}\rangle$ is a tuple of variables disjoint from $\vec{x}$ and $\vec{y}$.
\end{Definition}

We recall that the \emph{universal closure} of a quantifier-free formula $P(\vec{x})$ is the universal sentence $\forall \vec{x} P(\vec{x})$. We are now ready to present the new set of axioms for $\mathsf{Th}_\forall(\boldsymbol{F}_\pDL(\aleph_0))$.

\begin{Definition}
Consider the quantifier-free formula
\[
\textup{DN}(x,y,z) \coloneqq \neg \neg x = y \vee z \Rightarrow (y = \neg \neg x \sqcup z = \neg \neg x).
\]
Then let $\Sigma$ be the set of universal closures of the formulas in
\[
\{ 0 \not \thickapprox 1,  \textup{DN}(x,y,z) \} \cup \{ \textup{FS}_{n, m, 2^h-1}(\vec{x},\vec{y}) : n, m, h \in \mathbb{Z}^+ \text{ and } h \leq m \}.
\]
\end{Definition}

\begin{Proposition}\label{Prop : FSh, DN and free skeleton}
Let $\A \in \pDL$. Then $\A \vDash \Sigma$ if and only if $\B_*$ has a free skeleton for every finite subalgebra $\B$ of $\A$.
\end{Proposition}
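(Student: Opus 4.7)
The plan is to invoke Proposition~\ref{Prop : equiv free skel} to recast ``$\B_\ast$ has a free skeleton'', for each finite subalgebra $\B$ of $\A$, as the conjunction of three conditions: (a)~$\B$ is nontrivial; (b)~$\neg\neg b \in \Jirr(\B)$ for every $b \in B - \{0\}$; and (c)~there is a family of join-irreducibles $c_{a, Y} \in \Jirr(\B) \cap \down a$, indexed by pairs $(a, Y)$ with $a \in \Jirr(\B)$ and $\varnothing \neq Y \subseteq \At(\B) \cap \down a$, satisfying $\At(\B) \cap \down c_{a, Y} = Y$ and monotone in $Y$. I will then match each of the three components of $\Sigma$ --- the inequation $0 \not\thickapprox 1$, the formula $\textup{DN}$, and the family $\textup{FS}_{n, m, 2^h-1}$ --- with (a), (b), and (c) respectively.

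The correspondences for $0 \not\thickapprox 1$ and $\textup{DN}$ are routine. Nontriviality of $\A$ passes trivially to subalgebras and is implied by nontriviality of any subalgebra, so (a) matches the inequation. For $\textup{DN}$: universality transfers it from $\A$ to any finite subalgebra $\B$, where it states precisely (b) upon noting $\neg\neg b \neq 0$ whenever $b \neq 0$; conversely, given $a, y, z \in A$ with $\neg\neg a = y \vee z$, the subalgebra generated by $\{a, y, z\}$ is finite and satisfies (b), yielding $y = \neg\neg a$ or $z = \neg\neg a$ (the case $a = 0$ is immediate).

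The heart of the proof lies in the $\textup{FS}$ axioms, which I decode via Proposition~\ref{Prop : correspondence qf formula}. That proposition translates $\A \vDash \textup{FS}_{n, m, 2^h-1}(\vec{a}, \vec{b})$ into the following statement about the subalgebra $\B$ of $\A$ generated by $\vec{a}, \vec{b}$: if $a_1 \in \Jirr(\B)$, $b_1, \dots, b_m \in \At(\B)$, $b_1, \dots, b_h \leq a_1$, and $b_{h+1}, \dots, b_m \nleq a_1$, then there exist join-irreducibles $c_1, \dots, c_{2^h-1}$ of $\B$ with $c_i \leq a_1$, $c_i \leq c_j$ whenever $f(i) \subseteq f(j)$, $b_j \leq c_i$ for $j \in f(i)$, and $b_j \nleq c_i$ for $j \notin f(i)$ with $j \leq m$, where $f \colon \{1, \dots, 2^h - 1\} \to \wp(h) - \{\varnothing\}$ is the fixed bijection. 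For the direction ``(c) in all finite subalgebras implies $\A \vDash \textup{FS}$'', set $Y_i \coloneqq \{b_j : j \in f(i)\}$, a nonempty subset of $\At(\B) \cap \down a_1$; the elements $c_i \coloneqq c_{a_1, Y_i}$ supplied by (c) satisfy all the required clauses, the case $b_j \nleq c_i$ for $j > h$ being automatic from $b_j \nleq a_1 \geq c_i$. Conversely, given a finite subalgebra $\B$ generated by $g_1, \dots, g_k$, a fixed $a \in \Jirr(\B)$, and an enumeration $\At(\B) \cap \down a = \{b_1, \dots, b_h\}$, applying $\A \vDash \textup{FS}_{k+1, h, 2^h-1}$ at $\vec{a} = \langle a, g_1, \dots, g_k\rangle$ and $\vec{b} = \langle b_1, \dots, b_h\rangle$ yields $c_i$'s which, reindexed as $c_{a, Y} \coloneqq c_{f^{-1}(\{j : b_j \in Y\})}$ for nonempty $Y \subseteq \At(\B) \cap \down a$, supply the family in (c).

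The main obstacle is purely notational bookkeeping: aligning the bijection $f$ consistently across both directions, and verifying that the clauses ``$b_j \leq z_i$ for $j \in f(i)$'' and ``$b_j \nleq z_i$ for $j \notin f(i)$, $j \leq m$'' in $Q_{n, m, h}$ faithfully record $\At(\B) \cap \down c_i = Y_i$ --- which holds precisely because $c_i \leq a_1$ restricts the atoms below $c_i$ to lie in the already-enumerated set $\{b_1, \dots, b_h\}$.
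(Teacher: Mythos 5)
Your proposal matches the paper's proof: both reduce the claim via Proposition~\ref{Prop : equiv free skel} to the three conditions on finite subalgebras, match $0 \not\thickapprox 1$, $\textup{DN}$, and the $\textup{FS}$ axioms to nontriviality, condition~(\ref{item : equiv free skel : 3}), and conditions~(\ref{item : equiv free skel : 1})--(\ref{item : equiv free skel : 2}) respectively, and decode the $\textup{FS}$ axioms through Proposition~\ref{Prop : correspondence qf formula}. The only (harmless) deviation is that when extracting the family $c_{a,Y}$ from $\A \vDash \Sigma$ you instantiate $\vec{y}$ only by the atoms below $a$ (taking $m = h$), whereas the paper enumerates all of $\At(\B)$; both work, since $c_i \leq a$ already confines the atoms below $c_i$ to that set.
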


\begin{proof}
It suffices to show that $\A \vDash \Sigma$ if and only if every finite subalgebra $\B$ of $\A$ is nontrivial and satisfies conditions (\ref{item : equiv free skel : 1}), (\ref{item : equiv free skel : 2}), and (\ref{item : equiv free skel : 3}) of Proposition~\ref{Prop : equiv free skel}.
To prove the implication from left to right, assume that $\A \vDash \Sigma$ and consider a finite subalgebra $\B$ of $\A$. As $\A \vDash 0 \not \thickapprox 1$, the algebra $\B$ is nontrivial. Then let $a \in \Jirr(\B)$ and nonempty $Y, Z \subseteq \At(\B) \cap \down a$. As $a \in \Jirr(\B)$, we have $a \ne 0$. Since $\B$ is finite, this yields $\At(\B) \cap \down a \ne \emptyset$. Fix an enumeration $\{ b_1, \dots, b_m\}$ of $\At(\B)$ such that $\{b_1, \dots, b_h\} = \At(\B) \cap \down a$. Then consider $a_2, \dots, a_n \in B$ such that $a,a_2, \dots, a_n,b_1, \dots, b_m$ generate $\B$. Let $\vec{a} \coloneqq \langle a, a_2, \dots, a_n \rangle$ and $\vec{b} \coloneqq \langle b_1, \dots, b_m \rangle$. Since $\A \vDash \Sigma$, we have that $\A \vDash \textup{FS}_{n, m, 2^h-1}(\vec{a},\vec{b})$.
Furthermore, from $\At(\B) = \{ b_1, \dots, b_m\}$ and  $\At(\B) \cap \down a = \{b_1, \dots, b_h\}$ it follows that
\[
b_1, \dots, b_h \leq a \quad \text{and} \quad b_{h+1}, \dots, b_m \nleq a.
\]
By the definition of $P_{n, m, h}$ this amounts to $\B \vDash P_{n, m, h}(\vec{a}, \vec{b})$. Therefore, Proposition~\ref{Prop : correspondence qf formula}  implies that there exist $c_1, \dots, c_{2^h-1} \in \Jirr(\B)$ such that $\B \vDash Q_{n, m, h}(\vec{a},\vec{b}, c_1, \dots, c_{2^h-1})$. By the definition of $Q_{n, m, h}$ this means that there exists a bijection $f \colon \{1, \dots, 2^h-1\} \to \wp(h) - \{\varnothing\}$ such that
\benormal
\item\label{item : conditions FSh : 1} $c_i \leq a$ for every $i \leq 2^h-1$;
\item\label{item : conditions FSh : 2} $c_i \leq c_j$ for all $i, j \leq 2^h-1$ such that $f(i) \subseteq f(j)$;
\item\label{item : conditions FSh : 3} $b_j \leq c_i$ for all $i \leq 2^h-1$ and $j \leq m$ such that $j \in f(i)$;
\item\label{item : conditions FSh : 4} $b_j \nleq c_i$ for all $i \leq 2^h-1$ and $j \leq m$ such that $j \notin f(i)$.
\enormal
For each $i \leq 2^h-1$ let $c_{a,f(i)} \coloneqq c_i$. Since $f \colon \{1, \dots, 2^h-1\} \to \wp(h) - \{\varnothing\}$ is a bijection and $\At(\B) \cap \down a = \{b_1, \dots, b_h\}$, this is a definition of $c_{a, Y}$ for each nonempty $Y \subseteq \At(\B) \cap \down a$.

From (\ref{item : conditions FSh : 1}), (\ref{item : conditions FSh : 3}), and (\ref{item : conditions FSh : 4}) it follows that for every nonempty $Y \subseteq \At(\B) \cap \down a$ we have that $c_{a,Y} \leq a$ and $\At(\B) \cap \down c_{a,Y} = Y$. Moreover,  (\ref{item : conditions FSh : 2}) yields $c_{a,Y} \leq c_{a,Z}$ for each nonempty $Y,Z \subseteq \At(\B) \cap \down a$ such that $Y \subseteq Z$. Thus, $\B$ satisfies conditions (\ref{item : equiv free skel : 1}) and (\ref{item : equiv free skel : 2}) of Proposition~\ref{Prop : equiv free skel}. Lastly, since  $\A$ validates $\forall x,y,z \,\textup{DN}(x,y,z)$ by hypothesis, $\neg \neg b \in \Jirr(\A)$ for every $b \in A - \{ 0 \}$. Consequently, $\neg \neg b \in \Jirr(\B)$ for every $b \in B - \{ 0 \}$. Thus, $\B$ satisfies condition (\ref{item : equiv free skel : 3}) of Proposition~\ref{Prop : equiv free skel} as well. Hence, we conclude that $\B_*$ has a free skeleton.

To prove the other implication, suppose that every finite subalgebra $\B$ of $\A$ is nontrivial and satisfies conditions (\ref{item : equiv free skel : 1}), (\ref{item : equiv free skel : 2}), and (\ref{item : equiv free skel : 3}) of Proposition~\ref{Prop : equiv free skel}. First, as every finite subalgebra of $\A$ is nontrivial, so is the two-element subalgebra with universe $\{ 0, 1 \}$. It follows that $\A \vDash 0 \not \thickapprox 1$. Then we prove that $\A \vDash \forall x,y,z \,\textup{DN}(x,y,z)$. It suffices to show that $\neg \neg b \in \Jirr(\A)$ for every $b \in A - \{ 0 \}$. Consider $b, c,d \in A$ such that $b \ne 0$ and $\neg \neg b = c \vee d$ and let $\B$ be the subalgebra of $\A$ generated by the elements $b$, $c$, and $d$. Then $\B$ is finite because $\pDL$ is locally finite. Thus, condition (\ref{item : equiv free skel : 3}) of Proposition~\ref{Prop : equiv free skel} holds in $\B$, and hence $\neg \neg b \in \Jirr(\B)$. It follows that $\neg \neg b = c$ or $\neg \neg b = d$. As $b \ne 0$ by assumption, this shows that $\neg \neg b \in \Jirr(\A)$.

It only remains to prove that $\A$ validates $\forall \vec{x}, \vec{y} \, \textup{FS}_{n, m, 2^h-1}(\vec{x}, \vec{y})$ for each $n, m, h \in \mathbb{Z}^+$ such that $h \leq m$. To this end, let $\vec{a} = \langle a_1, \dots, a_n \rangle \in A^n$, $\vec{b} = \langle b_1, \dots, b_m \rangle \in A^m$, and $1 \leq h \leq m$. We will show that $\A \vDash \textup{FS}_{n, m, 2^h-1}(\vec{a},\vec{b})$ using Proposition~\ref{Prop : correspondence qf formula}. Let $\B$ be the subalgebra of $\A$ generated by $a_1, \dots, a_n, b_1, \dots, b_m$. 
Then $\B$ is finite because $\pDL$ is locally finite. Suppose that $a_1 \in \Jirr(\B)$, $b_1, \dots, b_m \in \At(\B)$, and $\B \vDash P_{n, m, h}(\vec{a},\vec{b})$. We only need to prove that there exists $\vec{c}\in \Jirr(\B)^{2^h-1}$ such that $\B \vDash Q_{n, m, h}(\vec{a}, \vec{b}, \vec{c})$. First, observe that $b_i \leq a_1$ if and only if $i \leq h$ because $\B \vDash P_{n, m, h}(\vec{a}, \vec{b})$. Consequently, $\{ b_1, \dots, b_h \} \subseteq \At(\B) \cap \down a_1$.  Thus, if $Y \subseteq \{ b_1, \dots, b_h\}$, then $Y \subseteq \At(\B) \cap \down a_1$. In view of condition (\ref{item : equiv free skel : 1}) of Proposition~\ref{Prop : equiv free skel} and $\{ b_1, \dots, b_h \} \subseteq \At(\B) \cap \down a_1$, for every nonempty $Y \subseteq \{ b_1, \dots, b_h\}$ there exists $c_{a_1,Y} \in \Jirr(\B)$ such that $c_{a_1,Y} \leq a_1$ and $\At(\B) \cap \down c_{a_1,Y}=Y$. In particular, $b_i \leq c_{a_1,Y}$ if and only if
 $b_i \in Y$. Similarly, condition (\ref{item : equiv free skel : 2}) of Proposition~\ref{Prop : equiv free skel} yields that for all nonempty $Y,Z \subseteq \{ b_1, \dots, b_h\}$ with $Y \subseteq Z$ we have $c_{a_1,Y} \leq c_{a_1,Z}$. Now, let $f \colon \{1, \dots, 2^h-1\} \to \wp(h) - \{\varnothing\}$ 
 be the bijection underlying the definition of $Q_{n, m, h}$. Then define $c_i\coloneqq c_{a_1,Y}$ with $Y \coloneqq \{b_j : j \in f(i)\}$ for each $i \leq 2^h-1$. From the definition of $Q_{n, m, h}$ it follows that $\B \vDash Q_h(\vec{a}, \vec{b},c_1,\dots,c_{2^h-1})$ as desired.
\end{proof}

As a consequence, we obtain the desired result.

\begin{Theorem}\label{Thm : improved axiomatization}
$\mathsf{Th}_\forall(\boldsymbol{F}_\mathsf{V}(\aleph_0))$ is axiomatized by $\Sigma$.
\end{Theorem}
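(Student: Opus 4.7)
The plan is to combine the two main results already assembled in the paper. By Theorem \ref{Thm : MAIN}, the class of models of $\mathsf{Th}_\forall(\boldsymbol{F}_\pDL(\aleph_0))$ consists precisely of those $\A \in \pDL$ such that the Priestley dual $\B_*$ of every finite subalgebra $\B$ of $\A$ has a free skeleton. By Proposition \ref{Prop : FSh, DN and free skeleton}, for $\A \in \pDL$, this geometric condition on the duals of its finite subalgebras is equivalent to $\A \vDash \Sigma$. So within $\pDL$, the set of axioms $\Sigma$ and the theory $\mathsf{Th}_\forall(\boldsymbol{F}_\pDL(\aleph_0))$ have identical model classes.

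Once the two model classes are identified, the conclusion that $\Sigma$ axiomatizes $\mathsf{Th}_\forall(\boldsymbol{F}_\pDL(\aleph_0))$ is immediate: a universal sentence holds in every member of one class if and only if it holds in every member of the other, and both $\Sigma$ and the theory are sets of universal sentences. The one mildly subtle point is that the theorem is phrased as axiomatizing by $\Sigma$ alone, while the free algebra also satisfies the (universal closures of the) defining equations of $\pDL$; this is reconciled by reading the axiomatization claim relative to the variety $\pDL$, or, as in Proposition \ref{Prop : the theory is RAxiom}, by understanding $\Sigma$ to be taken together with the finitely many equations that axiomatize $\pDL$.

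In effect, all of the technical content has already been discharged in the proof of Proposition \ref{Prop : FSh, DN and free skeleton}, which is where the structural requirement of ``having a free skeleton'' for the Priestley duals of finite subalgebras is encoded into the first-order formulas $\textup{FS}_{n,m,2^h-1}$ via the terms $p_L^\T$ from \cite{KS24} that describe join-irreducibles in finitely generated pseudocomplemented distributive lattices, together with $\textup{DN}$ to capture condition~(\ref{item : equiv free skel : 3}) of Proposition~\ref{Prop : equiv free skel}. Consequently, I do not foresee any genuine obstacle at this final step; the proof is simply the concatenation ``Theorem \ref{Thm : MAIN} $+$ Proposition \ref{Prop : FSh, DN and free skeleton}''.
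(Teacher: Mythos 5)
Your proposal is correct and follows exactly the paper's own proof: the authors also derive the theorem by concatenating Theorem \ref{Thm : MAIN} with Proposition \ref{Prop : FSh, DN and free skeleton}, noting that both identify the same class of models. Your remark about the implicit relativization to $\pDL$ is a fair observation about the statement's phrasing, but it does not affect the argument.
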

\begin{proof}
Let $\A \in \pDL$.
By Theorem~\ref{Thm : MAIN} we have that $\A \vDash \mathsf{Th}_\forall(\boldsymbol{F}_\mathsf{V}(\aleph_0))$ if and only if $\B_*$ has a free skeleton for every finite subalgebra $\B$ of $\A$. Proposition~\ref{Prop : FSh, DN and free skeleton} yields that the latter condition holds if and only if $\A \vDash \Sigma$. Hence, $\mathsf{Th}_\forall(\boldsymbol{F}_\mathsf{V}(\aleph_0))$ is axiomatized by $\Sigma$.
\end{proof}

\begin{Remark}\label{Rem : the final remark}
We recall that a universal sentence is said to be \emph{admissible} in a variety $\mathsf{V}$ when it holds in the free algebra $\boldsymbol{F}_\mathsf{V}(\aleph_0)$ (see, e.g., \cite[Thm.~2]{CM15}). Moreover, an axiomatization of $\mathsf{Th}_\forall(\boldsymbol{F}_\mathsf{V}(\aleph_0))$ is often called a \emph{basis} for the universal sentences admissible in $\mathsf{V}$. In this parlance, Theorem \ref{Thm : decidable} states that the problem of determining whether a universal sentence is admissible in $\pDL$ is decidable and Theorem \ref{Thm : improved axiomatization} identifies a basis for the universal sentences admissible in $\pDL$.
These results, however, do not admit an immediate translation
in terms of the admissibility of multiconclusion rules of the implication-free fragment of the
intuitionistic propositional calculus \textsf{IPC} because this fragment is not algebraizable in the sense
of \cite{BP89} (for more details, see \cite{Raf16}).
 Nonetheless, we remark that the single-conclusion implication-free fragment of \textsf{IPC} is structurally complete \cite{Min72}.
\qed
\end{Remark}

We conclude by outlining some open problems and directions for future investigation.

\benroman
\item It remains open whether $\mathsf{Th}_\forall(\boldsymbol{F}_\mathsf{V}(\aleph_0))$ admits a finite axiomatization. We expect the answer to be negative. Nevertheless, it would be interesting to explore whether the axiomatization can be refined into an independent one, possibly following an approach similar to the one of \cite{Jer08}.
\item We have established that $\mathsf{Th}_\forall(\boldsymbol{F}_\mathsf{V}(\aleph_0))$ is decidable. Its complexity, however, remains an open problem. In contrast, the complexity of the universal theory of free Heyting algebras is known to be co-NEXP-complete \cite{Jer07} and PSPACE-complete for the free algebras of the implication-negation fragment of \textsf{IPC} \cite{CM10}. 
\item It is shown in \cite{Lee70} that the subvariety lattice of $\pDL$ forms a chain of order type $\omega+1$, where all the proper subvarieties are finitely generated. The dual description of the free algebras in $\pDL$ can be easily adapted to each subvariety (see \cite{DG80}). However, the problem of axiomatizing the universal theory of the free algebras in subvarieties of $\pDL$ remains open, as our approach may require some adaptation.
\eroman

\paragraph{\bfseries Acknowledgements.}
The first author was supported by the ``National
Group for Algebraic and Geometric Structures, and their Applications'' (GNSAGA - INdAM). The second author was supported by the proyecto PID$2022$-$141529$NB-C$21$ de investigación financiado por
MICIU/AEI/$10$.$13039$/$501100011033$ y por FEDER, UE. He was also supported by the Research
Group in Mathematical Logic, $2021$SGR$00348$ funded by the Agency for Management of University and Research Grants of the Government of Catalonia, and by the MSCA-RISE-Marie Skłodowska-Curie Research and Innovation Staff Exchange (RISE)
project MOSAIC $101007627$ funded by Horizon $2020$ of the European Union.

\appendix

\section*{Appendix}
\renewcommand{\theTheorem}{A.\arabic{Theorem}}
\setcounter{Theorem}{0}

Our aim is to prove Proposition \ref{Prop : atoms jirr in fin gen}. To this end, we will rely on the next technical observation.

\begin{Lemma}\label{Lem : at jirr onto maps}
Let $\A,\B \in \pDL$ be finite, $h \colon \A \to \B$ a surjective homomorphism, and $b \in B$. Then the following conditions hold:
\benroman
\item\label{item : at jirr onto maps : 1} $b \in \At(\B)$ if and only if $b \ne 0$ and there exists $a \in \At(\A)$ such that $h(a)=b$;
\item\label{item : at jirr onto maps : 2} $b \in \Jirr(\B)$ if and only if there exists $a \in \Jirr(\A)$ such that 
\[
h(a) =b \quad \text{and} \quad b \ne \bigvee \{ h(c) : c \in \Jirr(\A) \text{ and } c < a \}.
\]
\eroman
\end{Lemma}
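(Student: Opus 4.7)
The plan is to dualize the surjective homomorphism $h$ via the finite Priestley duality for $\pDL$ (Corollary~\ref{Cor : finite PDL duality}) into a weak p-morphism $h_\ast \colon \B_\ast \to \A_\ast$ given by $h_\ast(F) = h^{-1}[F]$, and then transport information about atoms and join-irreducibles using Propositions~\ref{Prop : correspondence prime filters join irr} and~\ref{Prop : preservation of maximal}. The linchpin observation I will establish first is the following: for each $b \in \Jirr(\B)$, the prime filter $h^{-1}[{\uparrow}b]$ of $\A$ coincides with ${\uparrow}a$ for a unique $a \in \Jirr(\A)$, namely $a = \min h^{-1}[{\uparrow}b]$, and this $a$ satisfies $h(a) = b$. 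The identity ${\uparrow}a = h^{-1}[{\uparrow}b]$ follows from the fact that every prime filter of a finite distributive lattice is principal with join-irreducible minimum (Proposition~\ref{Prop : correspondence prime filters join irr}), while the equality $h(a) = b$ follows by choosing $x \in A$ with $h(x) = b$ via surjectivity: then $x \in {\uparrow}a$, so $a \leq x$ and $b \leq h(a) \leq h(x) = b$. Finally, if $b \in \At(\B)$ then ${\uparrow}b \in \max \B_\ast$, and Proposition~\ref{Prop : preservation of maximal}(\ref{item : preservation of maximal : 2}) applied to $h_\ast$ forces ${\uparrow}a \in \max \A_\ast$, i.e.\ $a \in \At(\A)$.

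The forward directions of both~(\ref{item : at jirr onto maps : 1}) and~(\ref{item : at jirr onto maps : 2}) follow at once from this observation, the second with $a = \min h^{-1}[{\uparrow}b]$. For the converse of~(\ref{item : at jirr onto maps : 1}), I will suppose $h(a) = b$ with $a \in \At(\A)$ and $b \ne 0$, and take $0 < b' \leq b$ in $\B$. Choosing $a' \in A$ with $h(a') = b'$ via surjectivity, I compute $h(a \wedge a') = b \wedge b' = b' \ne 0$, so $a \wedge a' \ne 0$ and atomicity of $a$ forces $a \wedge a' = a$; hence $b' = h(a) = b$. For the side condition in the forward direction of~(\ref{item : at jirr onto maps : 2}), I set $y \coloneqq \bigvee \{ c \in \Jirr(\A) : c < a \}$; then $y < a$ because $a$ is join-irreducible (otherwise $a$ would decompose as a join of strictly smaller join-irreducibles). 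If the side condition failed, then $h(y) = \bigvee \{ h(c) : c \in \Jirr(\A), c < a \} = b$, so $y \in h^{-1}[{\uparrow}b] = {\uparrow}a$ would force $a \leq y$, contradicting $y < a$.

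For the converse of~(\ref{item : at jirr onto maps : 2}), the side condition first implies $b \ne 0$: otherwise every $c < a$ would satisfy $h(c) \leq h(a) = 0$, collapsing the right-hand side to $0 = b$. Assuming $b = b_1 \vee b_2$, I pick $x_i \in A$ with $h(x_i) = b_i$ and set $y \coloneqq a \wedge (x_1 \vee x_2)$, so $y \leq a$ and $h(y) = b$. If $y = a$, then distributivity gives $a = (a \wedge x_1) \vee (a \wedge x_2)$, and join-irreducibility of $a$ yields $a \leq x_i$ for some $i$, whence $b = h(a) \leq b_i \leq b$ and $b = b_i$. If instead $y < a$, then writing $y = \bigvee (\Jirr(\A) \cap {\downarrow}y)$ realizes $h(y) = b$ as $\bigvee \{ h(c) : c \in \Jirr(\A), c \leq y \}$, which is sandwiched between $\bigvee \{ h(c) : c \in \Jirr(\A), c < a \}$ from above and $b$ from below (via $h(c) \leq h(a) = b$ for each $c < a$), forcing equality and contradicting the side condition. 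The main obstacle is precisely this final case analysis: the side condition of~(\ref{item : at jirr onto maps : 2}) is indispensable for ruling out $y < a$, since without it the join-irreducibility of $a$ in $\A$ would not propagate to $b$ in $\B$ through a homomorphism that need not reflect the order.
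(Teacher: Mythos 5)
Your proof is correct, and its backbone --- the element $a = \min h^{-1}[{\uparrow}b] = \bigwedge h^{-1}[{\uparrow}b]$ together with the identity ${\uparrow}a = h^{-1}[{\uparrow}b]$ and the consequence $h(a) = b$ --- is precisely the Claim on which the paper's proof rests. The two arguments diverge in how they finish two of the four implications. For the forward direction of (\ref{item : at jirr onto maps : 1}), the paper stays algebraic: from $c < a$ it gets $h(c) < b$, hence $h(c) = 0$ by atomicity of $b$, and then uses $h(\neg c) = \neg h(c) = 1 \geq b$ to force $a \leq \neg c$ and so $c = a \wedge c = 0$; you instead pass to the dual and invoke Proposition~\ref{Prop : preservation of maximal}(\ref{item : preservation of maximal : 2}) for the weak p-morphism $h_\ast$ to conclude that ${\uparrow}a = h_\ast({\uparrow}b)$ is maximal in $\A_\ast$. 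Both moves are legitimate, and both are exactly where preservation of $\neg$ enters (the claim fails for plain surjective lattice homomorphisms); yours delegates this to the duality machinery, the paper's keeps it elementary. For the converse of (\ref{item : at jirr onto maps : 2}), the paper proves the inclusion $\{d \in \Jirr(\B) : d < h(a)\} \subseteq \{h(c) : c \in \Jirr(\A),\ c < a\}$ (again via the minimum-of-preimage device, now applied to $d$) and concludes that the join of all elements strictly below $h(a)$ is strictly below $h(a)$; you instead split $b = b_1 \vee b_2$, lift to $y = a \wedge (x_1 \vee x_2)$, and case on $y = a$ versus $y < a$. Your case analysis is sound --- including the observations that the side condition forces $b \ne 0$ and rules out $y < a$ --- and is somewhat more self-contained, while the paper's version makes explicit the containment of join-irreducibles that mirrors how the lemma is subsequently used. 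The remaining two implications are handled essentially identically in both proofs.
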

\begin{proof}
We first establish the following claim.

\begin{Claim}\label{Claim : at jirr onto maps}
Let $a = \bigwedge h^{-1}[\up b]$. Then $h(a)=b$ and for every $c < a$ we have $h(c) < b$. In addition, if $b \in \Jirr(\B)$, then $a \in \Jirr(\A)$.
\end{Claim}

\begin{proof}[Proof of the Claim]
Observe that $\bigwedge h^{-1}[\up b]$ exists because $\A$ is finite. Since $h$ is a homomorphism of bounded distributive lattices, $h^{-1}[\up b]$ is a filter of $\A$. Then $a =\bigwedge h^{-1}[\up b] \in h^{-1}[\up b]$, and so $b \leq h(a)$. As $h$ is onto, there exists $c \in A$ such that $h(c)=b$. Then $c \in h^{-1}[\up b]$. Thus, $a =  \bigwedge h^{-1}[\up b] \leq c$, and hence $b \leq h(a) \leq h(c)=b$. It follows that $h(a)=b$.

Then consider $c \in A$ such that $c < a$. First, observe that $h(c) \leq h(a) = b$. We will prove that $h(c) < b$. Suppose the contrary. Together with $h(c) \leq b$, this yields $h(c) = b$. By the definition of $a$ we obtain $a \leq c$, a contradiction with the assumption that $c < a$. Hence, we conclude that $h(c) < b$. 

Lastly, suppose that $b \in \Jirr(\B)$. We will show that $a \in \Jirr(\A)$. Since $b$ is join-irreducible, $\up b$ is a prime filter of $\B$ by Proposition~\ref{Prop : correspondence prime filters join irr}. Then $h^{-1}[\up b]$ is a prime filter of $\A$ because $h$ is a homomorphism of bounded distributive lattices. Thus, $a = \bigwedge h^{-1}[\up b]$ is also join-irreducible by Proposition~\ref{Prop : correspondence prime filters join irr}.
\end{proof}

(\ref{item : at jirr onto maps : 1}): We first prove the implication from left to right. Let $b \in \At(\B)$. Then $b \ne 0$. Define $a \coloneqq \bigwedge h^{-1}[\up b]$. By the Claim we have $h(a)=b$. Therefore, it only remains to show that $a \in \At(\A)$. First, observe that $a \ne 0$ because $h(a)=b$ and $b \ne 0$. Then consider $c \in A$ such that $c < a$. By the Claim we have $h(c) < b$. Since $b$ is an atom by assumption, it follows that $h(c)=0$. Then $h(\neg c)=\neg h(c)= \neg 0 = 1 \geq b$.  By the definition of $a$ this yields $a \leq \neg c$ and, therefore, $a \wedge c = 0$. Together with $c < a$, this implies $c=a \wedge c =0$. Hence, we conclude that $a \in \At(\A)$ as desired. 

To prove the converse implication, consider $a \in \At(\A)$ such that $h(a) \ne 0$. We need to show that $h(a) \in \At(\B)$. Since $h(a) \ne 0$, it suffices to show that for every $d \in B$ such that $d \leq h(a)$ either $d= h(a)$ or $d = 0$ holds true. Consider $d \in B$ such that $d \leq h(a)$. Since $h$ is surjective, there exists $c \in A$ such that $h(c)=d$. Then $h(c) \leq h(a)$, which implies $h(c)=h(c) \wedge h(a)=h(c \wedge a)$. As $a$ is an atom, we have that either $c \wedge a = a$ or $c \wedge a = 0$. Together with $d=h(c)=h(c \wedge a)$, this yields that either $d=h(a)$ or $d=h(0)=0$. Hence, $h(a) \in \At(\B)$.

(\ref{item : at jirr onto maps : 2}): We begin by proving the implication from left to right. Let $b \in \Jirr(\B)$. Define $a \coloneqq  \bigwedge h^{-1}[\up b]$. By the Claim  we have $h(a)=b$ and $a \in \Jirr(\A)$. It only remains to show that $b \ne \bigvee \{ h(c) : c \in \Jirr(\A) \text{ and } c < a \}$. To this end, consider $c \in \Jirr(\A)$ such that $c < a$. By the Claim we have $h(c) < b$. Since $b$ is join-irreducible, it follows that $b \ne \bigvee \{ h(c) : c \in \Jirr(\A) \text{ and } c < a \}$.

Then we prove the converse implication. Consider an element 
$a \in \Jirr(\A)$ for which $h(a) \ne \bigvee \{ h(c) : c \in \Jirr(\A) \text{ and } c < a \}$. In order to establish that $h(a) \in \Jirr(\B)$, we first prove that 
\[
\{d : d \in \Jirr(\B) \text{ and } d < h(a) \} \subseteq \{ h(c) : c \in \Jirr(\A) \text{ and } c < a \}.
\]
Let $d \in \Jirr(\B)$ and assume that $d < h(a)$. Then consider $c \coloneqq \bigwedge h^{-1}[\up d]$. By the Claim we get $h(c)=d$ and $c \in \Jirr(\A)$.  Since $d < h(a)$, we have $a \in h^{-1}[\up d]$ and, therefore, $c  = \bigwedge h^{-1}[\up d]\leq a$. From $h(c)=d < h(a)$ it follows that $c \ne a$. Together with $c \leq a$, this yields $c < a$. Thus, $d=h(c)$ with $c \in \Jirr(\A)$ and $c < a$. This establishes the above display, which yields 
\begin{equation}\label{Eq : the second to last step in this proof}
\bigvee \{d : d \in \Jirr(\B) \text{ and } d < h(a) \} \leq \bigvee \{ h(c) : c \in \Jirr(\A) \text{ and } c < a \} < h(a),
\end{equation}
where the last inequality holds because  we assumed $h(a) \ne \bigvee \{ h(c) : c \in \Jirr(\A) \text{ and } c < a \}$ and $h$ is order preserving.
As $\B$ is finite, each of its elements is a join of join-irreducibles. Consequently,
\[
\bigvee \{e : e \in B \text{ and } e < h(a) \} = \bigvee \{d : d \in \Jirr(\B) \text{ and } d < h(a) \}.
\]
Together with (\ref{Eq : the second to last step in this proof}), the above display yields $\bigvee \{e : e \in B \text{ and } e < h(a) \} < h(a)$. Thus, we conclude that $h(a) \in \Jirr(\B)$.
\end{proof}

Now, recall that the free generators of $\boldsymbol{F}_\pDL(n)$ are $x_0/\theta_n, \dots, x_{n-1}/\theta_n$ and let
\[
\vec{x}/\theta_n \coloneqq \langle x_0/\theta_n, \dots, x_{n-1}/\theta_n\rangle.
\]
The atoms and join-irreducibles of $\boldsymbol{F}_\pDL(n)$ can be described as follows.

\begin{Theorem}\label{Thm : atoms and jirr in free}
The following conditions hold for every $n \in \mathbb{Z}^+$:
\benroman
\item\label{item : atoms and jirr in free : 1} $\At(\boldsymbol{F}_\pDL(n))=\{a_T(\vec{x}/\theta_n) : T \in \wp(n)\}$;
\item\label{item : atoms and jirr in free : 2} $\Jirr(\boldsymbol{F}_\pDL(n))=\{\plt(\vec{x}/\theta_n) : \langle L,\T\rangle \in \mathcal{S}(n) \}$; 
\item\label{item : atoms and jirr in free : 3} for all $\langle L, \T \rangle, \langle L', \T' \rangle \in \mathcal{S}(n)$ we have
\[
\plt(\vec{x}/\theta_n) \leq p_{L'}^{\T'}(\vec{x}/\theta_n) \quad \text{if and only if} \quad\langle L,\T\rangle \leq \langle L',\T'\rangle.
\]
\eroman
\end{Theorem}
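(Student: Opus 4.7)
My plan is to prove all three parts of the theorem simultaneously by computing each of the terms $a_T(\vec{x}/\theta_n)$ and $\plt(\vec{x}/\theta_n)$ directly as clopen upsets of the Priestley dual $\P(\mathsf{2}^n)$ of $\boldsymbol{F}_\pDL(n)$ (Corollary \ref{Cor : free pDL power of two}). I identify $\mathsf{2}^n$ with $\wp(n)$ via supports, so that elements of $\P(\mathsf{2}^n)$ are pairs $\langle S, D\rangle$ with $S \in \wp(n)$ and $\varnothing \ne D \subseteq \up S$. Because $\boldsymbol{F}_\pDL(n)$ is the free pseudocomplemented extension of $\boldsymbol{F}_\DL(n)$ and the latter is dual to $\mathsf{2}^n$, the canonical embedding $\boldsymbol{F}_\DL(n) \hookrightarrow \boldsymbol{F}_\pDL(n)$ is dual to the projection $\pi \colon \P(\mathsf{2}^n) \to \mathsf{2}^n$ given by $\langle S, D\rangle \mapsto S$. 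Hence the generator $x_i/\theta_n$ corresponds to the clopen upset $V_i \coloneqq \{\langle S, D\rangle : i \in S\}$, and using the identity $\neg U = X - \down U$ valid in any p-space, one verifies $\neg V_i = \{\langle S, D\rangle : i \notin T \text{ for every } T \in D\}$.

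From these formulas, part (\ref{item : atoms and jirr in free : 1}) follows directly: for each $T \in \wp(n)$, the positive meets in $a_T$ force $T \subseteq S$, while the negative meets force every $T' \in D$ to satisfy $T' \subseteq T$; combining this with $D \subseteq \up S$ collapses everything to the singleton $\{\langle T, \{T\}\rangle\}$. Since $\{\langle T, \{T\}\rangle : T \in \wp(n)\}$ is precisely the set of maximal elements of $\P(\mathsf{2}^n)$ by Proposition \ref{Prop : about Fn}(\ref{item : basic structure of P(X) : 3}), and these correspond bijectively to the atoms of $\boldsymbol{F}_\pDL(n)$ by Proposition \ref{Prop : correspondence prime filters join irr}, part (\ref{item : atoms and jirr in free : 1}) is established.

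For (\ref{item : atoms and jirr in free : 2}) and (\ref{item : atoms and jirr in free : 3}) the key computation is the double negation $\neg\neg U$ for $U \coloneqq \{\langle T, \{T\}\rangle : T \in \T\}$. A careful chase yields first $\neg U = \{\langle S, D\rangle : D \cap \T = \varnothing\}$, and then $\neg\neg U = \{\langle S, D\rangle : D \subseteq \T\}$: indeed, if $d \in D \setminus \T$, then $\langle d, \{d\}\rangle$ lies in $\neg U$ above $\langle S, D\rangle$, witnessing failure. Intersecting with $\bigcap_{i \in L} V_i = \{\langle S, D\rangle : L \subseteq S\}$ gives $\plt(\vec{x}/\theta_n) = \up \langle L, \T\rangle$ whenever $\T$ is nonempty (the condition $L \subseteq \bigcap \T$ from the definition of $\mathcal{S}(n)$ guarantees $\langle L, \T\rangle \in \P(\mathsf{2}^n)$). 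Since every element of $\P(\mathsf{2}^n)$ arises in this way and corresponds under Proposition \ref{Prop : correspondence prime filters join irr} to a unique join-irreducible of $\boldsymbol{F}_\pDL(n)$, part (\ref{item : atoms and jirr in free : 2}) follows. Part (\ref{item : atoms and jirr in free : 3}) is then immediate: because $a \leq b$ iff the associated clopen upsets satisfy $\gamma(a) \subseteq \gamma(b)$, we have $\plt \leq p_{L'}^{\T'}$ iff $\up \langle L, \T\rangle \subseteq \up \langle L', \T'\rangle$ iff $\langle L', \T'\rangle \leq \langle L, \T\rangle$ in $\P(\mathsf{2}^n)$, which unravels to $L' \subseteq L$ and $\T \subseteq \T'$, that is, $\langle L, \T\rangle \leq \langle L', \T'\rangle$ in $\mathcal{S}(n)$.

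The main obstacle I expect is the $\neg\neg$ computation, which demands careful tracking of witnesses in $\P(\mathsf{2}^n)$ above a given $\langle S, D\rangle$. A minor bookkeeping subtlety is that certain degenerate pairs in $\mathcal{S}(n)$ (in particular $\T = \varnothing$, which yields $\plt = 0$) need to be handled with care, since zero is excluded from the join-irreducibles by convention; one reads (\ref{item : atoms and jirr in free : 2}) up to this boundary case, or implicitly restricts $\mathcal{S}(n)$ to pairs with $\T$ nonempty, as these are precisely the pairs for which $\langle L, \T\rangle$ is a genuine element of $\P(\mathsf{2}^n)$.
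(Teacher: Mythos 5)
Your argument is correct in substance but follows a genuinely different route from the paper: the paper does not prove this theorem internally at all, it simply imports parts (\ref{item : atoms and jirr in free : 1}) and (\ref{item : atoms and jirr in free : 2}) from \cite[Cors.~5.1 and 5.5(2)]{KS24} and part (\ref{item : atoms and jirr in free : 3}) from \cite[Thm.~5.7(2)]{KS24}, whereas you give a self-contained derivation by computing the clopen upsets of $\P(\mathsf{2}^n)$ associated with the terms $a_T$ and $\plt$. Your computations check out: the generator $x_i/\theta_n$ corresponds to $V_i=\{\langle S,D\rangle: i\in S\}$ because the unit $\boldsymbol{F}_\DL(n)\to\boldsymbol{F}_\pDL(n)$ is dual to the first projection $\P(\mathsf{2}^n)\to\mathsf{2}^n$ (a fact from \cite{DG80} that you should cite explicitly rather than assert); the pseudocomplement of a clopen upset $U$ is $\P(\mathsf{2}^n)-{\down}U$; the chases giving $\gamma(a_T(\vec{x}/\theta_n))=\{\langle T,\{T\}\rangle\}$ and $\gamma(\plt(\vec{x}/\theta_n))={\up}\langle L,\T\rangle$ are right; and Proposition \ref{Prop : correspondence prime filters join irr} converts principal upsets of the dual into join-irreducibles and maximal points into atoms, after which part (\ref{item : atoms and jirr in free : 3}) falls out of ${\up}\langle L,\T\rangle\subseteq{\up}\langle L',\T'\rangle\iff\langle L',\T'\rangle\leq\langle L,\T\rangle$ in $\P(\mathsf{2}^n)$. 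What your approach buys is a proof entirely internal to the machinery the paper already sets up (Corollary \ref{Cor : free pDL power of two}, Proposition \ref{Prop : about Fn}); the paper's citation keeps the appendix short but leaves the reader dependent on \cite{KS24}, whose methods are algebraic rather than duality-theoretic.

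One bookkeeping point needs more care than you give it. You flag that $\T=\varnothing$ yields $\plt=0$, but the boundary case that actually matters for the surjectivity of your correspondence is $L=\varnothing$: for ``every element of $\P(\mathsf{2}^n)$ arises in this way'' you need pairs $\langle\varnothing,\T\rangle$, e.g.\ the minimum $\langle\varnothing,\wp(n)\rangle$ whose join-irreducible is $1=p_{\varnothing}^{\wp(n)}(\vec{x}/\theta_n)$. Already for $n=1$ the join-irreducibles are $x,\neg x,\neg\neg x,1$, which your computation matches with $p^{\{\{1\}\}}_{\{1\}}, p^{\{\varnothing\}}_{\varnothing}, p^{\{\{1\}\}}_{\varnothing}, p^{\wp(1)}_{\varnothing}$ respectively; only the first has nonempty $L$. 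So the condition defining $\mathcal{S}(n)$ has to be read as $\varnothing\ne\T$ and $L\subseteq\bigcap\T$ (equivalently, $\langle L,\T\rangle\in\P(\mathsf{2}^n)$), with $L=\varnothing$ permitted. Your proof should state this reading explicitly, since as you note the correspondence $\plt\mapsto{\up}\langle L,\T\rangle$ is a bijection from $\mathcal{S}(n)$ onto $\P(\mathsf{2}^n)$ only under it; with that fixed, the argument is complete.
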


\begin{proof}
For (\ref{item : atoms and jirr in free : 1}) and (\ref{item : atoms and jirr in free : 2}), see \cite[Cors.~5.1 and~5.5(2)]{KS24}.

(\ref{item : atoms and jirr in free : 3}): In the proof of \cite[Thm.~5.7(2)]{KS24}, it is shown that $\plt(\vec{x}/\theta_n) \leq p_{L'}^{\T'}(\vec{x}/\theta_n)$ if and only if $\mu_\T^L \leq^{\textbf{Cm}} \mu_{\T'}^{L'}$ (for the definition of the latter, see \cite{KS24}).
Therefore, condition (\ref{item : atoms and jirr in free : 3}) is an immediate consequence of \cite[Thm.~5.7(2)]{KS24}.
\end{proof}

We are now ready to prove Proposition \ref{Prop : atoms jirr in fin gen}.

\begin{proof}
First, let $h \colon \boldsymbol{F}_\pDL(n) \to \A$ be the unique homomorphism such that $h(x_i/\theta_n)=a_{i+1}$ for each $i = 0, \dots, n-1$.
Recall from Theorem \ref{Thm : PDL locally finite} that $\pDL$ is locally finite. Therefore, the free algebra $\boldsymbol{F}_\pDL(n)$ is finite. Moreover, $h$ is surjective because $\A$ is generated by $a_1, \dots, a_n$. Hence, $h$ satisfies the hypotheses of Lemma~\ref{Lem : at jirr onto maps}. This fact will be used repeatedly in the rest of the proof.
 
(\ref{item : atoms jirr in fin gen : 1}): By Lemma~\ref{Lem : at jirr onto maps}(\ref{item : at jirr onto maps : 1}) we have that $b \in \At(\A)$ if and only if $b \ne 0$ and there exists $c \in \At(\boldsymbol{F}_\pDL(n))$ such that $b=h(c)$. By Theorem~\ref{Thm : atoms and jirr in free}(\ref{item : atoms and jirr in free : 1}) the atoms of $\boldsymbol{F}_\pDL(n)$ are exactly the elements of the form $a_T(\vec{x}/\theta_n)$ for some $T \in \wp(n)$. Moreover, the definition of $h$ implies that $h(a_T(\vec{x}/\theta_n)) = a_T(\vec{a})$ for every $T \in \wp(n)$. Hence, $b \in \At(\A)$ if and only if $b \ne 0$ and there exists $T \in \wp(n)$ such that $b=a_T(\vec{a})$.

(\ref{item : atoms jirr in fin gen : 2}): By Lemma~\ref{Lem : at jirr onto maps}(\ref{item : at jirr onto maps : 2}) we have that $b \in \Jirr(\A)$ if and only if there exists $d \in \Jirr(\boldsymbol{F}_\pDL(n))$ such that 
\[
h(d) = b \quad \text{and} \quad b \ne \bigvee \{ h(c) : c \in \Jirr(\boldsymbol{F}_\pDL(n)) \text{ and } c < d \}.
\]
By Theorem~\ref{Thm : atoms and jirr in free}(\ref{item : atoms and jirr in free : 2})  the join-irreducibles of $\boldsymbol{F}_\pDL(n)$ are exactly the elements of the form $\plt(\vec{x}/\theta_n)$ for some $\langle L, \T \rangle \in \mathcal{S}(n)$. As above, we have that $h(\plt(\vec{x}/\theta_n)) = \plt(\vec{a})$ for every $\langle L, \T \rangle \in \mathcal{S}(n)$. Lastly, recall from Theorem~\ref{Thm : atoms and jirr in free}(\ref{item : atoms and jirr in free : 3}) that $p_{L'}^{\T'}(\vec{x}/\theta_n) \leq \plt(\vec{x}/\theta_n)$ if and only if $\langle L',\T'\rangle \leq \langle L,\T\rangle$. Hence, $b \in \Jirr(\A)$ if and only if there exists $\langle L,T\rangle \in \mathcal{S}(n)$ such that 
\[
b=\plt(\vec{a}) \quad \text{and} \quad b \ne \bigvee \{p_{L'}^{\T'}(\vec{a}) : \langle L',\T'\rangle \in \mathcal{S}(n) \text{ and }\langle L',\T'\rangle < \langle L,\T\rangle \}.\qedhere
\]
\end{proof}

\bibliographystyle{plain}

\begin{thebibliography}{10}

\bibitem{BD74}
R.~Balbes and Ph. Dwinger.
\newblock {\em Distributive Lattices}.
\newblock University of Missouri Press, Columbia, Mo., 1974.

\bibitem{Ber11}
C.~Bergman.
\newblock {\em Universal Algebra: Fundamentals and Selected Topics}.
\newblock Chapman \& Hall Pure and Applied Mathematics. Chapman and Hall/CRC,
  2011.

\bibitem{BP89}
W.J. Blok and D.~Pigozzi.
\newblock {\em Algebraizable Logics}, volume~77 of {\em Memoirs of the American
  Mathematical Society}.
\newblock Amer. Math. Soc., Providence, 1989.

\bibitem{BS12}
S.~Burris and H.P. Sankappanavar.
\newblock {\em A Course in Universal Algebra}.
\newblock The millennium edition, available online, 2012.

\bibitem{CM15}
L.~Cabrer and G.~Metcalfe.
\newblock Admissibility via natural dualities.
\newblock {\em J. Pure Appl. Algebra}, 219(9):4229--4253, 2015.

\bibitem{CM15b}
L.~Cabrer and G.~Metcalfe.
\newblock Exact unification and admissibility.
\newblock {\em Log. Methods Comput. Sci.}, 11(3):3:23, 15, 2015.

\bibitem{CK90}
C.C. Chang and H.J. Keisler.
\newblock {\em Model Theory}, volume~73 of {\em Studies in Logic and the
  Foundations of Mathematics}.
\newblock North-Holland Publishing Co., Amsterdam, third edition, 1990.

\bibitem{CM10}
P.~Cintula and G.~Metcalfe.
\newblock Admissible rules in the implication-negation fragment of
  intuitionistic logic.
\newblock {\em Ann. Pure Appl. Logic}, 162(2):162--171, 2010.

\bibitem{DG80}
B.A. Davey and M.S. Goldberg.
\newblock The free {$p$}-algebra generated by a distributive lattice.
\newblock {\em Algebra Universalis}, 11(1):90--100, 1980.

\bibitem{End01}
H.B. Enderton.
\newblock {\em A Mathematical Introduction to Logic}.
\newblock Academic Press, second edition, 2001.

\bibitem{GvG24}
M.~Gehrke and S.~van Gool.
\newblock {\em Topological Duality for Distributive Lattices: Theory and
  Applications}, volume~61 of {\em Cambridge Tracts in Theoretical Computer
  Science}.
\newblock Cambridge University Press, Cambridge, 2024.

\bibitem{HMU07}
J.E. Hopcroft, R.~Motwani, and J.D. Ullman.
\newblock {\em Introduction to Automata Theory, Languages, and Computation}.
\newblock Pearson/Addison-Wesley, third edition, 2007.

\bibitem{Idz87}
P.M. Idziak.
\newblock Undecidability of free pseudocomplemented distributive lattices.
\newblock {\em Rep. Math. Logic}, (21):97--100, 1987.

\bibitem{Jer07}
E.~Je\v{r}\'{a}bek.
\newblock Complexity of admissible rules.
\newblock {\em Arch. Math. Logic}, 46(2):73--92, 2007.

\bibitem{Jer08}
E.~Je\v{r}\'{a}bek.
\newblock Independent bases of admissible rules.
\newblock {\em Log. J. IGPL}, 16(3):249--267, 2008.

\bibitem{KM06}
O.~Kharlampovich and A.~Myasnikov.
\newblock Elementary theory of free non-abelian groups.
\newblock {\em J. Algebra}, 302(2):451--552, 2006.

\bibitem{KS24b}
T.~Kowalski and K.~Słomczyńska.
\newblock Decidable quasivarieties of p-algebras.
\newblock {\em Math. Log. Q.}, 71(1), paper no. 2300064, 2025.

\bibitem{KS24}
T.~Kowalski and K.~Słomczyńska.
\newblock Free p-algebras revisited: an algebraic investigation of
  implication-free intuitionism.
\newblock Available at arXiv:2405.14581, 2024.

\bibitem{KKV04}
C.~Kupke, A.~Kurz, and Y.~Venema.
\newblock Stone coalgebras.
\newblock {\em Theoret. Comput. Sci.}, 327(1-2):109--134, 2004.

\bibitem{Lee70}
K.B. Lee.
\newblock Equational classes of distributive pseudo-complemented lattices.
\newblock {\em Canadian J. Math.}, 22:881--891, 1970.

\bibitem{Mac71}
S.~Mac~Lane.
\newblock {\em Categories for the working mathematician}, volume~5 of {\em Graduate Texts in Mathematics}.
\newblock Springer-Verlag, New York, 1971.

\bibitem{Mic51}
E.~Michael.
\newblock Topologies on spaces of subsets.
\newblock {\em Trans. Amer. Math. Soc.}, 71:152--182, 1951.

\bibitem{Min72}
G.E. Mints.
\newblock Derivability of admissible rules.
\newblock {\em Zap. Nauch. Sem. Leningrad Otdel. Math. Inst. Steklov. (LOMI)},
  32:85--89, 1972.
\newblock English translation in \textit{Journal of Soviet Mathematics} 6(4),
  1976.

\bibitem{Pri70}
H.A. Priestley.
\newblock Representation of distributive lattices by means of ordered Stone
  spaces.
\newblock {\em Bull. London Math. Soc.}, 2:186--190, 1970.

\bibitem{Pri72}
H.A. Priestley.
\newblock Ordered topological spaces and the representation of distributive
  lattices.
\newblock {\em Proc. London Math. Soc. (3)}, 24:507--530, 1972.

\bibitem{Pri75}
H.A. Priestley.
\newblock The construction of spaces dual to pseudocomplemented distributive
  lattices.
\newblock {\em Quart. J. Math. Oxford Ser. (2)}, 26(102):215--228, 1975.

\bibitem{Raf16}
J.G. Raftery.
\newblock Admissible rules and the {L}eibniz hierarchy.
\newblock {\em Notre Dame J. Form. Log.}, 57(4):569--606, 2016.

\bibitem{RV93}
J.~Rebagliato and V.~Verd\'u.
\newblock On the algebraization of some {G}entzen systems.
\newblock {\em Fund. Inform.}, 18(2-4):319--338, 1993.

\bibitem{Rybakov1985}
V.V. Rybakov.
\newblock Elementary theories of free topo-Boolean and pseudo-Boolean algebras.
\newblock {\em Mathematical Notes of the Academy of Sciences of the USSR}, 37:435--438, 1985.




\bibitem{Ryb89}
V.V. Rybakov.
\newblock The universal theory of the free pseudo-{B}oolean algebra
  {$F_\omega(H)$} in the signature extended by constants for free generators.
\newblock In {\em Proceedings of the {I}nternational {C}onference on {A}lgebra,
  {P}art 3 ({N}ovosibirsk, 1989)}, volume 131 of {\em Contemp. Math.}, pages
  645--656. Amer. Math. Soc., Providence, RI, 1992.

\bibitem{Ryb97}
V.V. Rybakov.
\newblock {\em Admissibility of Logical Inference Rules}.
\newblock Elsevier, New York, 1997.

\bibitem{Sel06}
Z.~Sela.
\newblock Diophantine geometry over groups. {VI}. {T}he elementary theory of a
  free group.
\newblock {\em Geom. Funct. Anal.}, 16(3):707--730, 2006.


\bibitem{TarskiVaught}
A.~Tarski and R.L. Vaught.
\newblock Arithmetical extensions of relational systems.
\newblock {\em Compos. Math.}, 13:81--102, 1956--1958.


\bibitem{Urq73}
A.~Urquhart.
\newblock Free distributive pseudo-complemented lattices.
\newblock {\em Algebra Universalis}, 3:13--15, 1973.

\end{thebibliography}

\end{document}